\documentclass[a4paper]{amsart}

\usepackage{amsmath,amssymb,amsthm}
\usepackage[all]{xy}
\usepackage{eucal}
\usepackage[colorlinks=true,backref=page]{hyperref}


\newtheorem{theorem}{Theorem}[section]
\newtheorem{proposition}[theorem]{Proposition}
\newtheorem{lemma}[theorem]{Lemma}
\newtheorem{corollary}[theorem]{Corollary}

\theoremstyle{definition}
\newtheorem{definition}[theorem]{Definition}

\newtheorem{problem}[theorem]{Problem}

\theoremstyle{remark}
\newtheorem{remark}[theorem]{Remark}

\numberwithin{equation}{section}




\newcommand{\Z}{\mathbb{Z}}
\newcommand{\Q}{\mathbb{Q}}

\newcommand{\NE}{N\mathcal{E}}
\newcommand{\B}{\mathcal{B}}
\newcommand{\Map}{\operatorname{Map}}

\SelectTips{cm}{}


\title[Self-Closeness numbers of rational mapping spaces]{Self-Closeness numbers of rational mapping spaces}

\author[Yichen Tong]{Yichen Tong}
\address{Department of Mathematics, Kyoto University, Kyoto, 606-8502, Japan}
\email{tong.yichen.25m@st.kyoto-u.ac.jp}

\date{\today}

\subjclass[2010]{55P10, 55P62, 55P15}

\keywords{self-closeness number, mapping space, rational homotopy theory, Brown-Szczarba model}

\begin{document}
	
	\maketitle
	\begin{abstract}
		For a closed connected oriented manifold $M$ of dimension $2n$, it was proved by M\o ller and Raussen that the components of the mapping space from $M$ to $S^{2n}$ have exactly two different rational homotopy types. However, since this result was proved by the algebraic models for the components, it is unclear whether other homotopy invariants distinguish their rational homotopy types or not. The self-closeness number of a connected CW complex is the least integer $k$ such that any of its self-map inducing an isomorphism in $\pi_*$ for $*\le k$ is a homotopy equivalence, and there is no result on the components of mapping spaces so far. For a rational Poincar\'e complex $X$ of dimension $2n$ with finite $\pi_1$, we completely determine the self-closeness numbers of the rationalized components of the mapping space from $X$ to $S^{2n}$ by using their Brown-Szczarba models. As a corollary, we show that the self-closeness number does distinguish the rational homotopy types of the components. Since a closed connected oriented manifold is a rational Poincar\'e complex, our result partially generalizes that of M\o ller and Raussen.
	\end{abstract}
	
	\section{Introduction}
	
	Given two spaces $X$ and $Y$, we can associate the mapping space $\Map(X,Y)$. It is a classical problem in algebraic topology to classify the homotopy types of the path-components of $\Map(X,Y)$ for given $X$ and $Y$. This problem dates back, at least to the work of Whitehead \cite{W} in 1946, and there are many classification results for specific $X$ and $Y$. For instance, \cite{Ha2,Hu,K,W} studied the problem for $X=Y=S^n$. More recently, the problem was intensively studied in \cite{KiT,KoT,Ma,Mi,Su,Ta,Ts} when $Y$ is the classifying space of a Lie group $G$, as the components are the classifying spaces of gauge groups by \cite{G}. There are also results in \cite{GMS,Yk} for other $X$ and $Y$. See \cite{Sm} and its references for more details.

	Let $M$ be a closed connected oriented manifold of dimension $n$. We recall a classification result on the mapping space $\Map(M,S^n)$. By the Hopf degree theorem, the mapping degree gives a one-to-one correspondence between the path-components of $\Map(M,S^n)$ and $\Z$. Let $\Map(M,S^n;k)$ denote the path-component of degree $k$. In \cite{Ha2}, for $M$ with vanishing first Betti number, Hansen classified the homotopy types of the components of $\Map(M,S^n)$ such that $\Map(M,S^n,k)$ and $\Map(M,S^n;l)$ are homotopy equivalent if and only if one of the following conditions hold:
	\begin{enumerate}
		\item $|k|=|l|$ for $n$ even;
		
		\item the parity of $k$ and $l$ are equal for $n$ odd but 1,3,7;
		
		\item any $k$ and $l$ for $n=1,3,7$.
	\end{enumerate}
	Now we consider the rational homotopy types of $\Map(M,S^n;k)$. Since the rationalization of an odd sphere is an H-space, all components $\Map(M,S^n;k)$ have the same rational homotopy type for $n$ odd. Since the degree $k$ self-map of $S^n$ is a rational homotopy equivalence for $k\ne 0$, all components $\Map(M,S^n;k)$ but $\Map(M,S^n;0)$ have the same rational homotopy type for $n$ even. Then it remains to show whether or not $\Map(M,S^n;0)$ and $\Map(M,S^n;1)$ have the same rational homotopy type. M{\o}ller and Raussen \cite{MR} proved that they are not of the same rational homotopy type by considering their algebraic models, instead of specific homotopy invariants such as homology.

	The \emph{self-closeness number} of a connected CW complex $X$, denoted by $\NE(X)$, is defined to be the least integer $k$ such that every self-map of $X$ inducing an isomorphism in the homotopy groups of dimension $\le k$ is a homotopy equivalence. The self-closeness number was introduced by Choi and Lee \cite{CL} in 2015, and there are several results on it \cite{CL,L,OY1,OY2,OY3,To,Yt}. However, there are few explicit computations. Li \cite{L} and Oda and Yamaguchi \cite{OY3} determined the self-closeness numbers for some special homogeneous spaces. Later, the author \cite{To} computed those for some non-simply-connected finite complexes, which covers the previous results on non-simply-connected spaces. So far, all explicit computations are only for finite complexes, and there is no result on the components of mapping spaces. In this paper, we consider:

	\begin{problem}
		\label{prob}
		Does the self-closeness number distinguish the rational homotopy types of $\Map(M,S^n;0)$ and $\Map(M,S^n;1)$ for $n$ even?
	\end{problem}

	A space $X$ is called a \emph{rational Poincar\'e complex} of dimension $n$ if it is a finite complex of dimension $n$ such that $H^n(X;\Z)\cong\Z$ and the map
	\[
	H^i(X;\Z)\to H^{n-i}(X;\Z),\quad x\mapsto w\frown x
	\]
	is an isomorphism after tensoring with $\Q$, where $w$ is a generator of $H^n(X;\Z)$. Clearly, a closed connected oriented manifold of dimension $n$ is a rational Poincar\'e complex of dimension $n$. Moreover, we can consider Problem \ref{prob} for a rational Poincar\'e complex of dimension $n$, instead of a manifold $M$.

	To state the main result, we set notations. For a graded algebra $A$, let $QA^i$ denote its module of indecomposables of degree $i$. Let $X$ be a rational Poincar\'e complex of dimension $2n$. We say that $X$ is \emph{primitive} if $QH^i(X;\Q)=H^i(X;\Q)$ for $i<2n$, as $H_i(X;\Q)$ consists of primitive homology classes for $i<2n$ with respect to the comultiplication of $H_*(X;\Q)$ induced by the diagonal map of $X$. We define $d(X)$ to be the least integer $d$ such that $H^d(X;\Q)\ne 0$ and $d\ge n$. Let $Y_{(0)}$ denote the rationalization of a nilpotent space $Y$ in the sense of \cite{HMR}. Now we state the main theorem.

	\begin{theorem}
		\label{main}
		Let $X$ be a rational Poincar\'e complex of dimension $2n$ with finite $\pi_1$. Then we have
		\[
		\NE(\Map(X,S^{2n};1)_{(0)})=4n-1
		\]
		and
		\[
		\NE(\Map(X,S^{2n};0)_{(0)})=
		\begin{cases}
			2n&X\text{ is primitive,}\\
			d(X)&X\text{ is not primitive}.
		\end{cases}
		\]
	\end{theorem}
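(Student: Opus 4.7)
I would prove Theorem~\ref{main} by a direct computation in the Brown--Szczarba (BS) model of $\Map(X,S^{2n};k)_{(0)}$, reducing the question to a study of Sullivan endomorphisms. Take the minimal Sullivan model $(\Lambda V,d)$ of $S^{2n}$ with $V=\Q\langle a,b\rangle$, $|a|=2n$, $|b|=4n-1$, $da=0$, $db=a^{2}$, and a finite CDGA model of $X$. Let $\{\alpha_{i}\}$ be a graded basis of $H^{*}(X;\Q)$ with $\alpha_{0}=1$ and top class $\mu$, and let $\{\alpha_{i}^{*}\}$ be the dual basis of $H_{*}(X;\Q)$. The BS model has generators $a\otimes\alpha_{i}^{*}$ in degree $2n-|\alpha_{i}|$ and $b\otimes\alpha_{i}^{*}$ in degree $4n-1-|\alpha_{i}|$; the differential is determined by $d$ together with the coproduct of $H_{*}(X;\Q)$ dual to the cup product. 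Extracting the component of degree $k$ corresponds to setting the unique degree-zero generator $a\otimes\mu^{*}$ equal to the scalar $k$, yielding a Sullivan model $\mathcal{M}_{k}$ for $\Map(X,S^{2n};k)_{(0)}$.

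The critical structural point is that for $k=1$ the perturbed differential satisfies
\[
D(b\otimes\mu^{*})=2(a\otimes 1^{*})+E,
\]
where $E=\sum c_{ij}(a\otimes\alpha_{i}^{*})(a\otimes\alpha_{j}^{*})$ is a decomposable coming from Poincar\'e duality; hence $(a\otimes 1^{*},b\otimes\mu^{*})$ is a contractible pair that can be eliminated, leaving a reduced minimal model in which $a\otimes 1^{*}$ is replaced by $-\tfrac{1}{2}E$. For $k=0$ no such reduction occurs and all generators of $\mathcal{M}_{0}$ persist. In both cases the top-degree generator of the minimal model is $b\otimes 1^{*}$ in degree $4n-1$, which already explains morally why the two self-closeness numbers should differ.

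For $\NE(\Map(X,S^{2n};1)_{(0)})=4n-1$ the upper bound is automatic because $\mathcal{M}_{1}$ has no generators above degree $4n-1$. For the lower bound I would exhibit a Sullivan endomorphism $\phi$ of the reduced model that is the identity on every generator of degree $<4n-1$ and sends $b\otimes 1^{*}$ to a decomposable element $u_{0}$ satisfying $Du_{0}=D(b\otimes 1^{*})$; such a $u_{0}$ can be constructed explicitly by combining products of $a$-type and $b$-type generators (the prototype being $u_{0}=-\tfrac{1}{4}\sum c_{ij}(b\otimes\alpha_{i}^{*})(a\otimes\alpha_{j}^{*})$ in the primitive case, and an analogous but longer expression in general). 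Then $Q\phi$ is an iso on $V$ in degrees $<4n-1$ but annihilates the top generator, so $\phi$ induces iso on $\pi_{j}$ for $j<4n-1$ but is not a homotopy equivalence.

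For $\NE(\Map(X,S^{2n};0)_{(0)})$ I would split into the primitive and non-primitive cases and argue each both ways. If $X$ is primitive then $\Delta\alpha_{i}^{*}=1^{*}\otimes\alpha_{i}^{*}+\alpha_{i}^{*}\otimes 1^{*}$ for $0<|\alpha_{i}|<2n$, so $D(b\otimes\alpha_{i}^{*})=2(a\otimes 1^{*})(a\otimes\alpha_{i}^{*})$; a self-map iso on the generators of degree $\le 2n$ (all the $a$-type generators together with $b\otimes\mu^{*}$) is forced by these quadratic relations to be iso on each $b\otimes\alpha_{i}^{*}$ and finally on $b\otimes 1^{*}$, giving $\NE\le 2n$. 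Conversely, the endomorphism of $\mathcal{M}_{0}$ that annihilates $a\otimes 1^{*}$ and all $b\otimes\alpha_{i}^{*}$ with $\alpha_{i}\ne\mu$ while fixing the remaining generators is compatible with $D$ and is a non-equivalence iso on $\pi_{*}$ for $*<2n$, yielding $\NE\ge 2n$. If $X$ is not primitive, the coproduct of a basis class of degree $d(X)$ contains off-diagonal decomposable terms from nontrivial cup products, so $D(b\otimes\gamma^{*})$ acquires additional quadratic terms coupling strictly lower-degree $a$-type generators; this strengthened linkage makes the propagation argument start already in degree $d(X)$, giving $\NE\le d(X)$, with a parallel construction for $\NE\ge d(X)$. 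The main technical obstacle throughout is the careful bookkeeping of all decomposable terms in the differential of $\mathcal{M}_{k}$: this is what is required to verify that the proposed self-maps are genuine CDGA endomorphisms and, in the $k=1$ case, to show that $D(b\otimes 1^{*})$ remains exact in the reduced minimal model uniformly in the cup-product structure of $H^{*}(X;\Q)$.
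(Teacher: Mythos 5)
Your structural outline matches the paper: you use the Brown--Szczarba model, reduce the degree-$1$ case by eliminating the contractible pair $(a\otimes 1^{*},\,b\otimes\mu^{*})$, and want to show the resulting minimal model splits off a free exterior generator in degree $4n-1$. For the degree-$0$, primitive case your argument is essentially the paper's: the quadratic relations force all $B_{i}(f)$ to be regular once the $A_{k}(f)$ and $B_{2n}(f)$ are, and your endomorphism ``annihilate $a\otimes 1^{*}$ and all $b\otimes\alpha_{i}^{*}$ with $\alpha_{i}\ne\mu$, fix the rest'' is exactly the witness map the paper constructs in Proposition~\ref{NE 2}.

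However, in the degree-$1$ case you have a genuine gap precisely where the paper concentrates its effort. To show $\NE=4n-1$ you need a decomposable $u_{0}\in\Lambda U$ with $Du_{0}=D(b\otimes 1^{*})=\tfrac14\eta^{2}$. Your ``prototype'' $u_{0}=-\tfrac14\sum c_{ij}(b\otimes\alpha_{i}^{*})(a\otimes\alpha_{j}^{*})$ (the paper's element $\alpha$ in Lemma~\ref{alpha xi}) does \emph{not} satisfy $d\alpha=\eta^{2}$ in general: one has $d\alpha=\eta^{2}-\xi$ with $\xi$ the cubic term \eqref{xi} built from the full cup-product structure of $H^{*}(X)$. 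In the primitive case $\xi=0$ and your prototype works, but in the non-primitive case you must produce a further decomposable $\mu$ with $d\mu=\xi$, and verifying its existence is nontrivial. The paper achieves this by choosing $\mu=\sum_{x\in\B_{-}}\lambda(x)\,\mu(x)$ with carefully tuned weights $\lambda(x)=\tfrac{3(n-|x|)}{n}\,\epsilon(\hat x)$, then proving $\xi=d\mu$ via a filtration $\B_{n}=\mathcal{F}_{0}\subset\cdots\subset\mathcal{F}_{m}=\widehat{\B}$, a direct-sum decomposition $\mathcal{U}=\bigoplus_{i}\mathcal{V}_{i}$, and injective derivations $\partial_{i}$ used to compare coefficients in each summand (Lemmas~\ref{U_i}--\ref{lambda}, Propositions~\ref{a} and~\ref{mu}). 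Asserting that ``an analogous but longer expression in general'' exists does not close this gap; the choice of $\lambda(x)$ and the bookkeeping proving $d\mu=\xi$ are the technical core of the theorem, and it is not a priori clear that \emph{any} decomposable $\mu$ works.

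Your treatment of the degree-$0$, non-primitive case is also too vague. You invoke ``off-diagonal terms making the propagation start at $d(X)$,'' but $d(X)$ is defined purely from Betti numbers (least $d\ge n$ with $H^{d}(X;\Q)\ne 0$), not from where non-primitivity appears. The actual mechanism in the paper's Proposition~\ref{NE 1} has two separate steps: first, the Poincar\'e-duality constraint \eqref{top} pins down $B_{2n}(f)$ and hence all $A_{k}(f)$ with $0<k<2n$ once $A_{d(X)}(f)$ and $A_{2n-d(X)}(f)$ are regular; second, the existence of a non-trivial purely decomposable product (from non-primitivity) forces $B_{i}(f)\ne 0$ for some $i<2n$, which together with $B_{i}(f)=A_{0}(f)A_{i}(f)$ pins down $A_{0}(f)$. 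The lower-bound witness map also uses Lemma~\ref{d(M)} to guarantee it is a dga map. None of this mechanism is visible in your sketch.
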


	We immediately get the following corollary, which gives a positive answer to Problem \ref{prob}.

	\begin{corollary}
		Let $X$ be a rational Poincar\'e complex of dimension $2n$ with finite $\pi_1$. The self-closeness number distinguishes the rational homotopy types of $\Map(X,S^{2n};k)$ for $k=0,1$.
	\end{corollary}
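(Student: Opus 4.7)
The plan is to deduce the corollary as an immediate numerical consequence of Theorem \ref{main}. The key preliminary observation is that the self-closeness number is a homotopy invariant, so any two nilpotent spaces with the same rational homotopy type have homotopy equivalent rationalizations and hence equal $\NE$; to separate the rational homotopy types of $\Map(X,S^{2n};0)$ and $\Map(X,S^{2n};1)$ it therefore suffices to separate the self-closeness numbers of their rationalizations. Since $\Map(X,S^{2n};k)$ is nilpotent under our hypotheses (which is what allows the rationalization $(\cdot)_{(0)}$ of \cite{HMR} to be applied in the statement of Theorem \ref{main} in the first place), this reduction is legitimate.

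Plugging in the formulas of Theorem \ref{main}, I would write $\NE(\Map(X,S^{2n};1)_{(0)}) = 4n-1$, while $\NE(\Map(X,S^{2n};0)_{(0)})$ equals either $2n$ or $d(X)$. Because $X$ is a rational Poincar\'e complex of dimension $2n$, we have $H^{2n}(X;\Q)\ne 0$, so by definition $n\le d(X)\le 2n$; in particular both possible values for the degree-zero component are bounded above by $2n$. Under the natural hypothesis $n\ge 1$ one then has $4n-1 \ge 2n+1 > 2n$, so the two self-closeness numbers differ, forcing the two path-components to have distinct rational homotopy types.

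There is essentially no mathematical obstacle beyond bookkeeping: the only small points to register are the implicit assumption $n\ge 1$ (the case $n=0$ is vacuous as $S^0$ is disconnected and the mapping-degree classification does not apply) and the explicit verification that $d(X)\le 2n$, which is exactly where the Poincar\'e-complex hypothesis enters. Once these are in place the inequality chain is automatic and the corollary follows without further computation.
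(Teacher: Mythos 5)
Your argument is correct and matches what the paper intends by calling this an immediate consequence of Theorem~\ref{main}: since $H^{2n}(X;\Q)\ne 0$ gives $d(X)\le 2n$, both possible values in the degree-zero case are at most $2n<4n-1$, so the self-closeness numbers of the rationalized components differ and hence the rational homotopy types differ. The observation that $\NE$ is a homotopy invariant of the rationalization is exactly the bridge the paper relies on implicitly.
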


	This paper is organized as follows. In Section 2, we introduce the self-closeness number of a minimal Sullivan algebra, and prove that the self-closeness number of a simply-connected rational space coincides with that of its minimal model. In Section 3, we recall the Brown-Szczarba models for $\Map(X,S^{2n})$ and its components, where $X$ is a rational Poincar\'e complex of dimension $2n$ with finite $\pi_1$. In Sections 4 and 5, we compute the self-closeness numbers of the minimal models for $\Map(X,S^{2n};k)_{(0)}$ for $k=0,1$.

	\subsection*{Acknowledgements}
	I would like to thank Daisuke Kishimoto for encouraging me on this research, and for mentioning operations on manifolds such as handle attachments and surgeries, which motivated the filtration of the vector space in Section 5. I am also grateful for his help in improving the notations and distribution of the manuscript to make it readable. I would also like to thank Toshihiro Yamaguchi for valuable discussions and suggestions. I also appreciate Katsuhiko Kuribayshi, Takahito Naito, Masahiro Takeda and Shun Wakatsuki for discussions in the early stage of this research, and the referees for carefully reading the manuscript and for advice on revision. This work was supported by JST SPRING, Grant Number JPMJSP2110.

	
	\section{Algebraic self-closeness number}
	
	In this section, we define the self-closeness number of a minimal Sullivan algebra, and prove that it coincides with the self-closeness number of a corresponding rational space. Hereafter, we will assume that all algebras and vector spaces will be over the field $\Q$.

	We say that a dga $A$ is an algebraic model for a space $X$ if there is a zig-zag of quasi-isomorphisms
	\[
	A\xrightarrow{\simeq}A_1\xleftarrow{\simeq}A_2\xrightarrow{\simeq}\cdots\xleftarrow{\simeq}A_n\xrightarrow{\simeq}A_{\rm PL}(X)
	\]
	where $A_{\rm PL}(X)$ denotes the dga of piecewise linear forms on $X$. If $X$ is a nilponent space, then the rationalization $X\to X_{(0)}$ is a rational homotopy equivalence, implying that a dga $A$ is an algebraic model for $X$ if and only if so is for $X_{(0)}$. We recall a minimal model for a space. Let $V$ be a positively graded vector space, and let $\Lambda V$ denote the free commutative graded algebra generated by $V$. We say that a dga is a minimal Sullivan algebra if it is of the form $(\Lambda V,d)$ such that
	\[
	d(V_n)\subset\Lambda(V_{\le{n-1}})
	\]
	for all $n\ge 1$, where $V_n$ and $V_{\le n-1}$ are the degree $n$ and degree $\le n-1$ parts of $V$, respectively. If an algebraic model for a space $X$ is a minimal Sullivan algebra, then we call it a minimal model for $X$. If $(\Lambda V,d)$ is a minimal model for a space $X$, then a zig-zag of quasi-isomorphisms patch together to yield a quasi-isomorphism
	\[
	(\Lambda V,d)\xrightarrow{\simeq}A_{\rm PL}(X).
	\]
	We state the fundamental theorem of rational homotopy theory.

	\begin{theorem}
		\label{fundamental}
		For every simply-connected rational space $X$ of finite rational type, there is a minimal model
		\[
		\alpha\colon(\Lambda V,d)\xrightarrow{\simeq}A_{\rm PL}(X)
		\]
		satisfying the following properties.
		
		\begin{enumerate}
			\item $(\Lambda V,d)$ is unique, up to isomorphism.
			
			\item The quasi-isomorphism $\alpha$ is natural, up to homotopy.
			
			\item There is a natural isomorphism
			\[
			V\cong\mathrm{Hom}(\pi_*(X),\Q).
			\]
		\end{enumerate}
	\end{theorem}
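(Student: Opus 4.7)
The plan is to establish existence by an inductive construction of $(\Lambda V,d)$ degree-by-degree, derive uniqueness and naturality from a single lifting lemma for Sullivan algebras, and identify $V$ with the graded dual of $\pi_*(X)\otimes\Q$ through Sullivan's spatial realization functor. For existence, build $(\Lambda V,d)$ and $\alpha$ inductively on the degree of the generators. Since $X$ is simply connected, start with $V^{\le 1}=0$. Assuming a minimal cdga $(\Lambda V_{\le n-1},d)$ with a map $\alpha_{n-1}\colon(\Lambda V_{\le n-1},d)\to A_{\rm PL}(X)$ that is an isomorphism on $H^i$ for $i\le n-1$ and injective on $H^n$, adjoin generators in degree $n$ to (i) fill the cokernel of $H^n(\alpha_{n-1})$, as closed generators sent to chosen cocycle lifts in $A_{\rm PL}(X)$, and (ii) kill the kernel of $H^{n+1}(\alpha_{n-1})$, as generators with prescribed differential sent to a chosen primitive of their image. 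The finite rational type hypothesis keeps each $V^n$ finite dimensional; the colimit over $n$ yields the required minimal model.

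For uniqueness and naturality, the central tool is the lifting lemma for Sullivan cdgas: any cdga map $(\Lambda W,d)\to A$ lifts, uniquely up to homotopy, through a surjective quasi-isomorphism $B\twoheadrightarrow A$, where homotopy is defined via the polynomial-form cylinder $\Lambda W\otimes(\Q[t]\oplus\Q[t]\,dt)$. Factoring an arbitrary quasi-isomorphism as a surjective quasi-isomorphism composed with a homotopy equivalence extends the lifting property up to homotopy to any quasi-isomorphism. Given two minimal models $\alpha_i\colon(\Lambda V_i,d_i)\to A_{\rm PL}(X)$, lifting $\alpha_2$ through $\alpha_1$ produces a quasi-isomorphism $\psi\colon(\Lambda V_2,d_2)\to(\Lambda V_1,d_1)$; minimality on both sides forces the induced map on indecomposables $V_2\to V_1$ to be an isomorphism, whence $\psi$ itself is an isomorphism by a degree induction. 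For naturality, given $f\colon X\to Y$ with minimal models $\alpha_X$ and $\alpha_Y$, lift $A_{\rm PL}(f)\circ\alpha_Y$ through $\alpha_X$ to obtain a cdga map between the minimal models, well-defined up to homotopy, that represents $f$.

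For the identification $V\cong\mathrm{Hom}(\pi_*(X),\Q)$, I would invoke Sullivan's spatial realization functor $\langle-\rangle$, left adjoint to $A_{\rm PL}$. For a simply connected minimal Sullivan algebra of finite type, the unit $(\Lambda V,d)\to A_{\rm PL}(\langle\Lambda V\rangle)$ is a quasi-isomorphism and $\langle\Lambda V\rangle$ is a simply connected rational space of finite type; combining with $\alpha$ exhibits $\langle\Lambda V\rangle\simeq X$. Minimality makes the linear part of $d$ vanish, so $\langle\Lambda V\rangle$ is presented as a rational Postnikov tower in which $V^n$ contributes a layer of $K(\Q,n)$ fibres, producing a natural isomorphism $\pi_n(\langle\Lambda V\rangle)\cong\mathrm{Hom}(V^n,\Q)$, which transports to the claimed natural isomorphism $V\cong\mathrm{Hom}(\pi_*(X),\Q)$. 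The hardest step is setting up the lifting lemma together with the underlying cdga homotopy theory; once that machinery is in place, uniqueness and naturality are essentially formal, and the dual-homotopy identification becomes an explicit computation inside the realization.
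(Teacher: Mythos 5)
The paper does not actually prove this statement: it is recorded there without proof, introduced by the phrase ``We state the fundamental theorem of rational homotopy theory,'' as a citation of Sullivan's foundational theorem (the standard reference being F\'elix--Halperin--Thomas, \emph{Rational Homotopy Theory}). So there is no in-paper argument to compare yours against. That said, your sketch is an accurate outline of the textbook proof: the degree-by-degree construction of $(\Lambda V,d)$ (adding closed generators for the cokernel of $H^n(\alpha_{n-1})$ and generators with prescribed differential for the kernel of $H^{n+1}(\alpha_{n-1})$), the lifting lemma for Sullivan cdgas with homotopies in the polynomial cylinder, the reduction of uniqueness to the fact that a quasi-isomorphism of minimal Sullivan algebras is an isomorphism, and the identification of $V$ with $\mathrm{Hom}(\pi_*,\Q)$ via spatial realization and the Postnikov layer structure coming from the vanishing of the linear part of $d$. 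Two small points worth flagging if you were to write this out in full: the minimality condition $d(V^n)\subset\Lambda^{\ge2}V$ should be observed to be automatic in the simply connected, degree-by-degree construction (a degree count shows that any element of degree $n+1$ in $\Lambda(V^{\le n-1})$ is necessarily decomposable), and the naturality assertion in part (2) requires verifying compatibility of the lifted map with homotopies of $f$, which falls out of the up-to-homotopy uniqueness in the lifting lemma but deserves an explicit sentence.
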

	If $X$ is a nilpotent space of finite type such that $X_{(0)}$ is simply-connected, then we often say that a minimal model for $X_{(0)}$ is a minimal model for $X$. For a map $f\colon\Lambda V\to\Lambda W$ between free commutative graded algebras, we define its linear part $f_0\colon V\to W$ by the composite
	\[
	V\xrightarrow{\rm incl}\Lambda V\xrightarrow{f}\Lambda W\xrightarrow{\rm proj}\Lambda W/(\Lambda^+W)^2\cong W
	\]
	where $\Lambda^+W$ denotes the ideal of $\Lambda W$ generated by elements of positive degrees. We define the self-closeness number of a minimal Sullivan algebra.

	\begin{definition}
		The self-closeness number of a minimal Sullivan algebra $(\Lambda V,d)$, denoted by $\NE(\Lambda V,d)$, is the least integer $n$ such that any dga map $(\Lambda V,d)\to(\Lambda V,d)$ is an isomorphism whenever its linear part is an isomorphism in degree $\le n$.
	\end{definition}

	We will use the following lemma to compute the self-closeness number of a rational space.

	\begin{proposition}
		\label{NE(minimal)}
		Let $X$ be a simply-connected rational CW complex of finite rational type, and let $(\Lambda V,d)$ be its minimal model. Then we have
		\[
		\NE(X)=\NE(\Lambda V,d).
		\]
	\end{proposition}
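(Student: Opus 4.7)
The plan is to use Theorem~\ref{fundamental} to translate between self-maps of $X$ and dga self-maps of its minimal model $(\Lambda V,d)$, and to verify that the two defining conditions of the self-closeness number correspond under this translation. Throughout I use that $X$ is simply-connected, rational, and of finite rational type, so $\pi_*(X)$ is a $\Q$-vector space of finite type.

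First I would establish the bijection
\[
[X,X]\xrightarrow{\;\cong\;}[(\Lambda V,d),(\Lambda V,d)]_{\mathrm{dga}}
\]
between homotopy classes of self-maps of $X$ and dga-homotopy classes of dga self-maps of $(\Lambda V,d)$. This follows from Theorem~\ref{fundamental}: every $f\colon X\to X$ admits a Sullivan representative $\phi_f$ unique up to dga-homotopy, obtained by lifting $A_{\rm PL}(f)\circ\alpha$ along $\alpha$, and every dga self-map is topologically realized via the spatial realization of the minimal model.

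Next I would check that under this bijection the two properties relevant to the self-closeness number correspond. By the naturality of the isomorphism $V\cong\mathrm{Hom}(\pi_*(X),\Q)$ in Theorem~\ref{fundamental}(3), the linear part $(\phi_f)_0$ restricted to $V^i$ is the $\Q$-dual of $\pi_i(f)\otimes\Q$; since $\pi_i(X)$ is already a finite-dimensional $\Q$-vector space, $(\phi_f)_0$ is an isomorphism in degrees $\le n$ if and only if $\pi_i(f)$ is an isomorphism for every $i\le n$. Moreover, $f$ is a homotopy equivalence if and only if $\phi_f$ is a quasi-isomorphism (since $X$ is simply-connected and rational), and any dga quasi-endomorphism of a minimal Sullivan algebra is an isomorphism; hence $f$ is a homotopy equivalence if and only if $\phi_f$ is an isomorphism.

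Combining these, for every $n\ge 1$ the statement ``every self-map of $X$ inducing an isomorphism on $\pi_i$ for all $i\le n$ is a homotopy equivalence'' is equivalent to ``every dga self-map of $(\Lambda V,d)$ whose linear part is an isomorphism in degrees $\le n$ is an isomorphism'', proving $\NE(X)=\NE(\Lambda V,d)$. No deep obstacle arises; the only subtlety is that these correspondences are well defined on homotopy classes, which holds because the linear part of a dga map and the quasi-isomorphism property are both invariant under dga-homotopy.
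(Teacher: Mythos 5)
Your proposal is correct and follows essentially the same route as the paper: both translate self-maps of $X$ to dga self-maps of $(\Lambda V,d)$ via Theorem~\ref{fundamental}, use the naturality of $V\cong\mathrm{Hom}(\pi_*(X),\Q)$ to identify linear-part isomorphisms with $\pi_*$-isomorphisms, and invoke the fact that a quasi-isomorphism of a minimal Sullivan algebra (equivalently, by Whitehead, a $\pi_*$-isomorphism of $X$) must be an isomorphism. Your write-up is a bit more explicit about the bijection $[X,X]\cong[(\Lambda V,d),(\Lambda V,d)]_{\mathrm{dga}}$ needed to realize arbitrary dga self-maps topologically, a point the paper leaves implicit; otherwise the two arguments coincide.
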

	
	\begin{proof}
		Let $f\colon X\to X$ be a map, and let $g\colon(\Lambda V,d)\to(\Lambda V,d)$ be a dga map corresponding to a map $f$, which exists, up to homotopy, by Theorem \ref{fundamental}. Since there is a natural isomorphism $V\cong\mathrm{Hom}(\pi_*(X),\Q)$ as in Theorem \ref{fundamental}, there is a commutative diagram
		\[
		\xymatrix{
			V\ar[r]^{g_0}\ar[d]_\cong&V\ar[d]^\cong\\
			\mathrm{Hom}(\pi_*(X),\Q)\ar[r]^{(f_*)^*}&\mathrm{Hom}(\pi_*(X),\Q).
		}
		\]
		Then $g_0$ is an isomorphism in degree $k$ if and only if $f_*\colon\pi_k(X)\to\pi_k(X)$ is an isomorphism. Thus if $\NE(X)=m$, then $g$ is an isomorphism whenever $g_0$ is an isomorphism in degree $\le m$, that is, $\NE(\Lambda V,d)\le m$. On the other hand, if $\NE(\Lambda V,d)=n$, then $f$ is an isomorphism in $\pi_*$ whenever it is an isomorphism in $\pi_*$ for $*\le n$, implying $\NE(X)\le n$ by the J.H.C. Whitehead theorem. Thus the proof is finished.
	\end{proof}

	For the rest of this section, let $X$ denote a rational Poincar\'e complex of dimension $2n$ with finite $\pi_1$. Then by \cite[Theorem 2.5]{HMR}, the mapping space $\Map(X,S^{2n};k)$ is a nilpotent complex of finite type. Then to apply Proposition \ref{NE(minimal)} to $\Map(X,S^{2n};k)_{(0)}$, we need the following lemma.

	\begin{lemma}
		\label{1-conn}
		For any integer $k$, $\Map(X,S^{2n};k)_{(0)}$ is simply-connected.
	\end{lemma}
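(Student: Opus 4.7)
The plan is to reduce simple connectivity to a vanishing statement on a Sullivan model of $\Map(X,S^{2n};k)_{(0)}$. By \cite[Theorem~2.5]{HMR}, $\Map(X,S^{2n};k)$ is nilpotent of finite type, and since a product of positive-degree elements of a Sullivan algebra has total degree at least $2$, the first cohomology of any Sullivan model of $\Map(X,S^{2n};k)_{(0)}$ coincides with the space of its degree-$1$ generators; it thus suffices to exhibit a Sullivan model with no generator in degree $1$, because then $H^{1}(\Map(X,S^{2n};k)_{(0)};\Q)=0$ and, by rational nilpotence, $\pi_{1}=0$. Since the degree-$k$ self-map of $S^{2n}$ is a rational equivalence for every $k\ne 0$, post-composition gives $\Map(X,S^{2n};1)_{(0)}\simeq\Map(X,S^{2n};k)_{(0)}$ for every $k\ne 0$, so only the cases $k=0$ and $k=1$ need be addressed.

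For an explicit model, I would use the Brown-Szczarba construction recalled in Section~3. Its free generators have the form $v\otimes\alpha^{\#}$, where $v$ ranges over a basis $\{x,y\}$ of the minimal Sullivan model of $S^{2n}$ (with $|x|=2n$ and $|y|=4n-1$) and $\alpha^{\#}$ ranges over the dual of a basis of a chosen finite-dimensional CDGA model $A$ of $X$ satisfying $A^{i}=0$ for $i>2n$; the degree of such a generator is $|v|-|\alpha|$. A generator of degree $1$ can therefore arise only when $(|v|,|\alpha|)=(2n,2n-1)$ or $(|v|,|\alpha|)=(4n-1,4n-2)$. In the first case, $A^{2n-1}\ne 0$ would force $H^{2n-1}(X;\Q)\ne 0$; but Poincar\'e duality together with the finiteness of $\pi_{1}(X)$ gives $H^{2n-1}(X;\Q)\cong H^{1}(X;\Q)^{\vee}=0$. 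In the second case, $4n-2\le 2n$ requires $n\le 1$, which is excluded as soon as $n\ge 2$.

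Thus for $n\ge 2$ the Brown-Szczarba model carries no generator in degree $1$, whence $\pi_{1}(\Map(X,S^{2n};k)_{(0)})=0$. The main obstacle I foresee is the boundary case $n=1$, in which the second possibility does produce a degree-$1$ generator $y\otimes[X]^{\#}$; there I would specialise the degree-$0$ ``component coordinate" of the Brown-Szczarba model to the value selecting the chosen component $k$, read off the twisted differential coming from $dy=x^{2}$ and the coalgebra structure on $H_{*}(X;\Q)$, and verify directly that the offending degree-$1$ class is killed in cohomology. This last case-by-case differential chase is where I expect the delicate work to lie.
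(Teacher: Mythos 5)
Your route is genuinely different from the paper's: you argue directly from the Brown--Szczarba model, showing it has no generators in degree~$1$, whereas the paper compares $\Map(X,S^{2n}_{(0)};\,r\circ k)$ with $\Map(X,K(\Q,2n);0)$ via the $(4n-1)$-equivalence $S^{2n}_{(0)}\to K(\Q,2n)$, then uses that $K(\Q,2n)$ is an $H$-group and invokes Thom's theorem to read off $\pi_1 \cong H^{2n-1}(X;\Q)=0$. Your algebraic approach is cleaner in that it doesn't need the reduction to $k\in\{0,1\}$ at all: the quotient of the model by $I_k=(u\otimes w-k)$ kills only the degree-$0$ generator, so the positive-degree generators, hence $W^1$, are the same for every $k$, and the argument runs uniformly.

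There is, however, one genuine lapse in how you set it up. You work with an arbitrary finite-dimensional CDGA model $A$ of $X$ and claim that $A^{2n-1}\neq0$ would force $H^{2n-1}(X;\Q)\neq0$. That implication is false for a general $A$ -- a model can easily have $A^{2n-1}\neq 0$ with $H^{2n-1}(A)=0$ (adjoin a contractible pair in that degree). To make the argument watertight you should invoke the \emph{specialized} form of the Brown--Szczarba model established in Section~3 (Theorems~\ref{model}, \ref{modelM}), where the coefficient vector space is literally $H_*(X)$; then the absence of degree-$1$ generators $u\otimes x$ is exactly $H_{2n-1}(X;\Q)=0$, which follows from Poincar\'e duality and $\pi_1(X)$ finite. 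Also, ``the first cohomology coincides with the space of degree-$1$ generators'' is not quite right for a non-minimal Sullivan algebra -- it is the kernel of $d$ on $V^1$ -- but since you conclude $V^1=0$ this does not affect the argument. Finally, you are right that $n=1$ is delicate: there the generator $v\otimes w$ has degree $1$, and in fact for $k=0$ it is a cocycle surviving to the minimal model, so $\pi_1\otimes\Q\neq0$ and the lemma fails. This is not a flaw particular to your proof; the paper's $(2n-1)$-equivalence $i_*$ only gives a surjection on $\pi_1$ when $n=1$, so the paper's argument silently assumes $n\ge2$ as well.
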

	
	\begin{proof}
		By \cite[Theorem 3.11]{HMR}, there is a homotopy equivalence
		\[
		\Map(X,S^{2n};k)_{(0)}\simeq\Map(X,S^{2n}_{(0)};r\circ k)
		\]
		where $r\colon S^{2n}\to S^{2n}_{(0)}$ denotes the rationalization. Since there is a $(4n-1)$-equivalence $i\colon S^{2n}_{(0)}\to K(\Q,2n)$, we get a $(2n-1)$-equivalence
		\[
		i_*\colon\Map(X,S^{2n}_{(0)};r\circ k)\to\Map(X,K(\Q,2n);i\circ r\circ k)
		\]
		and since $K(\Q,2n)$ is an H-group, we have
		\[
		\Map(X,K(\Q,2n);i\circ r\circ k)\simeq\Map(X,K(\Q,2n);0).
		\]
		On the other hand, by the theorem of Thom \cite{Th}, there is a homotopy equivalence
		\[
		\Map(X,K(\Q,2n);0)\simeq\prod_{k=0}^{2n-1}K(H^k(X;\Q),2n-k).
		\]
		Since $\pi_1(X)$ is finite, we have $H^{2n-1}(X;\Q)\cong H^1(X;\Q)=0$ by the definition of a rational Poincar\'e complex. Thus the proof is finished.
	\end{proof}

	
	\section{Brown-Szczarba model}
	
	In this section, we give an algebraic model for $\Map(X,S^{2n})$ of Brown and Szczarba \cite{BS} for a general space $X$, and specialize it to the case when $X$ is a rational Poincar\'e complex of dimension $2n$. We also give algebraic models for the components of $\Map(X,S^{2n})$. We will write the rational homology and cohomology of $X$ simply by $H_*(X)$ and $H^*(X)$, respectively.

	Brown and Szczarba \cite[Theorem 5.3]{BS} proved that for a CW complex $X$ and a nilpotent complex $Y$ of finite type, there is an algebraic model for $\Map(X,Y)$ of the form
	\[
	(\Lambda(V\otimes H_*(X)),d)
	\]
	where $(\Lambda V,d)$ is a minimal model for $Y$ and we set
	\[
	|x\otimes y|=|x|-|y|
	\]
	for $x\in V$ and $y\in H_*(X)$, where $|a|$ denotes the degree of an element $a$ in a graded vector space. The differential is defined in terms of the differential of the minimal model $(\Lambda V,d)$ and the comultiplication of the chain complex of $X$, instead of the comultiplication of $H_*(X)$ in general. We specialize this algebraic model to $\Map(X,S^{2n})$. Recall that $S^{2n}$ has a minimal model given by
	\[
	(\Lambda(u,v),d),\quad du=0,\quad dv=u^2
	\]
	where $|u|=2n$ and $|v|=4n-1$. Let $V$ be a graded vector space spanned by $u,v$. Then there is an algebraic model for $\Map(X,S^{2n})$ of the form
	\begin{equation}
		\notag
		\label{BS model}
		(\Lambda(V\otimes H_*(X)),d)
	\end{equation}
	where the differential is defined in terms of the comultiplication of the chain complex of $X$. However, the proof of \cite[Lemma 5.1]{BS} implies that in our special case, the differential is actually defined in terms of the comultiplication of $H_*(X)$ as follows.

	\begin{theorem}
		\label{model}
		There is an algebraic model for $\Map(X,S^{2n})$ of the form
		\[
		(\Lambda(V\otimes H_*(X)),d)
		\]
		such that
		\[
		d(u\otimes x)=0\quad\text{and}\quad d(v\otimes x)=\sum_i(u\otimes y_i)(u\otimes z_i)
		\]
		where $\Delta(x)=\sum_iy_i\otimes z_i$ for the comultiplication $\Delta$ of $H_*(X)$.
	\end{theorem}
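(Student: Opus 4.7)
The plan is to invoke Brown--Szczarba's general theorem and unpack the differential formula in this maximally simple Sullivan setting. First I would recall that the construction underlying \cite[Theorem 5.3]{BS} produces $(\Lambda(V\otimes H_*(X)),d)$ from a chain-level intermediate whose underlying dga is (essentially) $(\Lambda(V\otimes C_*(X)),D)$, with differential determined by the Sullivan differential on $V$ and the chain coproduct $\Delta_C\colon C_*(X)\to C_*(X)\otimes C_*(X)$. Concretely, for a generator $w\in V$ with $dw=P(v_1,\dots,v_k)$ a polynomial in other generators, $D(w\otimes c)$ is obtained by distributing $P$ over an iterated diagonal of $c$ matched to the factors of $P$.

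Next I would specialize to the minimal model $(\Lambda(u,v),du=0,dv=u^2)$. Since $du=0$, the formula collapses to $D(u\otimes c)=0$; and since $dv=u^2$ is a quadratic monomial in a single generator, it yields $D(v\otimes c)=\sum_i(u\otimes c_i')(u\otimes c_i'')$, where $\Delta_C c=\sum_i c_i'\otimes c_i''$. Choosing cycle representatives $\tilde x\in C_*(X)$ for a basis of $H_*(X)$, the claimed formulas would follow once I identify $\sum_i(u\otimes \tilde y_i)(u\otimes \tilde z_i)$ (with $\Delta_C\tilde x=\sum\tilde y_i\otimes \tilde z_i$) with $\sum_i(u\otimes y_i)(u\otimes z_i)$ (with $\Delta x=\sum_i y_i\otimes z_i$ the homology coproduct) after passing to the quotient $(\Lambda(V\otimes H_*(X)),d)$.

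This identification is exactly the content of the proof of \cite[Lemma 5.1]{BS}: any two chain representatives of $\Delta(x)$ differ by a boundary $(\partial\otimes 1+1\otimes\partial)\tau$ in $C_*(X)\otimes C_*(X)$, and the corresponding discrepancy in the chain-level Brown--Szczarba dga is $D$ applied to an expression of the form $\sum_j(u\otimes a_j)(u\otimes b_j)$ (using $D(u\otimes c)=u\otimes \partial c$ in the chain-level model, since $du=0$), hence exact. A judicious choice of the quasi-isomorphism from the homology-level model to the chain-level model absorbs this exact term and produces the formulas on the nose.

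The main obstacle is precisely this absorption step, i.e.\ the compatibility of the chain-to-homology passage with the nonlinear algebra structure of the Brown--Szczarba differential. Because $dv=u^2$ is a quadratic monomial in a single generator, this reduces to the simplest instance of the argument in \cite[Lemma 5.1]{BS} and requires no additional input beyond a careful bookkeeping of cycle representatives.
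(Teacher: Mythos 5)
Your proposal takes essentially the same route as the paper: the paper's own justification for Theorem \ref{model} is the discussion immediately preceding its statement (there is no separate \texttt{proof} environment), where the general $(\Lambda(V\otimes H_*(X)),d)$ model is cited from Brown--Szczarba Theorem~5.3 and the passage from the chain-level coproduct to the homology coproduct is attributed to the proof of Brown--Szczarba Lemma~5.1. You simply unpack that citation in a bit more detail, noting that $du=0$ kills all but the $u\otimes\partial c$ term in the chain-level differential and that the quadratic monomial $dv=u^2$ makes the representative-independence argument in BS Lemma~5.1 go through verbatim.
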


	Hereafter, let $X$ denote a rational Poincar\'e complex of dimension $2n$ with finite $\pi_1$. We specialize the algebraic model in Theorem \ref{model}. First, for $x,y,z\in H_*(X)$, we define a rational number $\epsilon(x,y,z)$ by
	\[
	\epsilon(x,y,z)=\langle y^*\smile x^*,z\rangle
	\]
	where $x^*,y^*\in H^*(X)$ denote the dual cohomology classes of $x,y$ and $\langle-,-\rangle$ denotes the pairing of cohomology and homology classes. Note that
	\begin{equation}
		\label{epsilon degree}
		|x|+|y|=|z|\quad\text{whenever}\quad\epsilon(x,y,z)\ne 0.
	\end{equation}

	Next, we choose a basis of $H_*(X)$ as follows. Hereafter, we fix a generator $w$ of $H_{2n}(X)\cong\Q$. Let
	\[
	\B_0=\{1\}\subset H_0(X)\quad\text{and}\quad\B_{2n}=\{w\}\subset H_{2n}(X).
	\]
	We choose any basis $\B_i$ of $H_i(X)$ for $i=2,3,\ldots,n-1$, where $H_1(X)=0$ since $\pi_1(X)$ is finite. By definition, the cup product
	\[
	H^i(X)\otimes H^{2n-i}(X)\to H^{2n}(X)\cong\Q,\quad x\otimes y\mapsto x\smile y
	\]
	is nondegenerate. Then for each $x\in\B_i$ with $i=2,3,\ldots,n-1$, there is unique $\mathrm{PD}(x)\in H_{2n-i}(X)$ satisfying that for $y\in\B_i$,
	\begin{equation}
		\notag
		\epsilon(y,\mathrm{PD}(x),w)=
		\begin{cases}
			1&y=x,\\
			0&y\ne x.
		\end{cases}
	\end{equation}
	We set $\B_{2n-i}=\{\mathrm{PD}(x)\mid x\in\B_i\}$ for $i=2,3,\ldots,n-1$. Then $\B_{2n-i}$ is a basis of $H_{2n-i}(X)$ for $i=2,3,\ldots,n-1$. Since $H_{2n-1}(X)\cong H_1(X)=0$, it remains to choose a basis for $H_n(X)$. Since the cup product
	\[
	H^n(X)\otimes H^n(X)\to H^{2n}(X)\cong\Q
	\]
	is a nondegenerate symmetric bilinear form on $H^n(X)$ for $n$ even and a nondegenerate anti-symmetric bilinear form on $H^n(X)$ for $n$ odd, we can choose a basis $\B_n=\{x_1,\ldots,x_{b_n}\}$ of $H_n(X)$ such that the matrix $(\epsilon(x_i,x_j,w))_{1\le i,j\le b_n}$ is a regular diagonal matrix for $n$ even and of the form
	\[
	\begin{pmatrix}
		0&-\lambda_1\\
		\lambda_1&0\\
		&&\ddots\\
		&&&0&-\lambda_{\frac{b_n}{2}}\\
		&&&\lambda_{\frac{b_n}{2}}&0
	\end{pmatrix}
	\]
	for $n$ odd, where $\lambda_1\cdots\lambda_{\frac{b_n}{2}}\ne 0$ and $b_i=\dim H_i(X)$. We set
	\[
	\B=\B_0\sqcup\cdots\sqcup\B_{2n}.
	\]
	Then $\B$ is a basis of $H_*(X)$. By definition, we have
	\begin{equation*}
		\label{comultiplication}
		\Delta(x)=\sum_{x_1,x_2\in\B}\epsilon(x_1,x_2,x)x_1\otimes x_2.
	\end{equation*}
	for $x\in\B$, where $\epsilon(x_1,x_2,x)=0$ unless $|x_1|+|x_2|=|x|$ by \eqref{epsilon degree}. By Theorem \ref{model}, we get:

	\begin{theorem}
		\label{modelM}
		There is an algebraic model for $\Map(X,S^{2n})$ of the form
		\[
		(\Lambda(V\otimes H_*(X)),d)
		\]
		such that for $x\in\B$,
		\begin{align*}
			d(u\otimes x)&=0,\\
			d(v\otimes x)&=\sum_{x_1,x_2\in\B}\epsilon(x_1,x_2,x)(u\otimes x_1)(u\otimes x_2).
		\end{align*}
	\end{theorem}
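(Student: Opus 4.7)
The plan is to obtain Theorem \ref{modelM} as an immediate specialization of Theorem \ref{model} once the comultiplication of $H_*(X)$ is written out in the basis $\B$ chosen above. Concretely, what really needs to be verified is the identity
\[
\Delta(x)=\sum_{x_1,x_2\in\B}\epsilon(x_1,x_2,x)\,x_1\otimes x_2,\qquad x\in\B,
\]
which is precisely the equation displayed just before the statement.

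To establish this identity I would invoke the standard duality between the comultiplication on $H_*(X)$ induced by the diagonal $X\to X\times X$ and the cup product on $H^*(X)$: for all $\alpha,\beta\in H^*(X)$ and $\gamma\in H_*(X)$ one has $\langle\alpha\smile\beta,\gamma\rangle=(\alpha\otimes\beta)(\Delta\gamma)$. Let $\{x^*\mid x\in\B\}$ be the basis of $H^*(X)$ dual to $\B$. Writing $\Delta(x)=\sum_{x_1,x_2}c_{x_1,x_2}^{x}\,x_1\otimes x_2$ and pairing with $x_j^*\otimes x_i^*$ (in the order matching the paper's convention $\epsilon(x_i,x_j,x)=\langle x_j^*\smile x_i^*,x\rangle$) isolates the coefficient $c_{x_i,x_j}^{x}=\epsilon(x_i,x_j,x)$. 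Finiteness and homogeneity of the resulting sum follow from the degree constraint $|x_1|+|x_2|=|x|$ recalled in the excerpt, so only finitely many terms survive for each $x\in\B$.

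With this identity in hand, Theorem \ref{modelM} reduces to pure substitution. Theorem \ref{model} already supplies an algebraic model $(\Lambda(V\otimes H_*(X)),d)$ for $\Map(X,S^{2n})$ with $d(u\otimes x)=0$ and $d(v\otimes x)=\sum_i(u\otimes y_i)(u\otimes z_i)$ whenever $\Delta(x)=\sum_i y_i\otimes z_i$; plugging in the expression above for $\Delta(x)$ yields exactly the asserted formula for $d(v\otimes x)$. Since the construction of $\B$ itself (using the nondegeneracy of the cup product pairing guaranteed by the Poincar\'e complex hypothesis) has already been carried out in the preparation preceding the theorem, there is essentially no substantive obstacle. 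The only point that deserves a careful word is the compatibility between the sign/ordering convention in the definition $\epsilon(x,y,z)=\langle y^*\smile x^*,z\rangle$ and the order of the tensor factors in $\Delta$, but this compatibility is exactly what motivates the convention with which $\epsilon$ was introduced.
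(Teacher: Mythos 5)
Your proposal matches the paper's own (implicit) proof: the paper simply records the basis expansion $\Delta(x)=\sum_{x_1,x_2\in\B}\epsilon(x_1,x_2,x)\,x_1\otimes x_2$ immediately before the theorem and then cites Theorem \ref{model}, which is exactly the "duality plus substitution" argument you spell out. Your extra paragraph verifying the expansion via $\langle\alpha\smile\beta,\gamma\rangle=(\alpha\otimes\beta)(\Delta\gamma)$ just makes explicit what the paper takes as definitional, and your flag about the ordering/sign convention in $\epsilon(x,y,z)=\langle y^*\smile x^*,z\rangle$ is the right thing to be careful about and is handled correctly.
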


	Now we give an algebraic model for the component $\Map(X,S^{2n};k)$. Note that the degree zero part of $\Lambda(V\otimes H_*(X))$ is spanned by $u\otimes w$ and the unit $1$. Let $I_k$ be the ideal of $\Lambda(V\otimes H_*(X))$ generated by the element $u\otimes w-k$ of degree zero. Since $d(u\otimes w-k)=0$, $I_k$ is an ideal of dga. In particular, we can define the quotient dga $(\Lambda(V\otimes H_*(X))/I_k,d)$. By \cite[Theorem 6.1]{BS}, we get:

	\begin{theorem}
		\label{component model}
		The quotient dga $(\Lambda(V\otimes H_*(X))/I_k,d)$ is an algebraic model for $\Map(X,S^{2n};k)$.
	\end{theorem}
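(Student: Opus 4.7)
The plan is to extract from the Brown-Szczarba model of the whole mapping space a model for a single path-component by observing that one degree-zero class records the mapping degree, so that quotienting by the ideal it generates isolates the $k$-th component.

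I would first identify the degree-zero part of $\Lambda(V\otimes H_*(X))$. Since $|u|=2n$, $|v|=4n-1$, and $H_i(X)$ is nonzero only for $0\le i\le 2n$ with $H_1(X)=0$ (because $\pi_1(X)$ is finite), the only degree-zero generator besides the unit is $u\otimes w$. Hence this degree-zero subalgebra is the polynomial ring $\Q[u\otimes w]$, and the differential vanishes on it by Theorem \ref{modelM}. Next I would identify $u\otimes w$ with the mapping-degree function. Via the evaluation map $\mathrm{ev}\colon\Map(X,S^{2n})\times X\to S^{2n}$, the class $\mathrm{ev}^*(u)\in H^{2n}(\Map(X,S^{2n})\times X;\Q)$ decomposes under the K\"unneth isomorphism; slanting against the fundamental class $w\in H_{2n}(X;\Q)$ yields an element of $H^0(\Map(X,S^{2n});\Q)$ whose value on a map $f$ is $\deg f$, and that element is represented algebraically by $u\otimes w$. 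Consequently $u\otimes w$ takes the constant value $k$ on $\Map(X,S^{2n};k)$, so the restriction of the Brown-Szczarba model to this component annihilates $u\otimes w-k$ and yields a natural dga map
$$
(\Lambda(V\otimes H_*(X))/I_k,d)\longrightarrow A_{\rm PL}(\Map(X,S^{2n};k)).
$$

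The main obstacle is verifying that this induced map is a quasi-isomorphism, which is precisely the content of \cite[Theorem 6.1]{BS}. I would carry this out by a rank comparison of minimal models: the rational homotopy of $\Map(X,S^{2n};k)$ can be read off from the evaluation fibration $\mathrm{ev}\colon\Map(X,S^{2n};k)\to S^{2n}$ together with the known rational homotopy type of $S^{2n}$, while on the algebraic side the indecomposables of the minimal model of the quotient dga are indexed by $V\otimes H_*(X)$ with the single degree-zero class $u\otimes w$ replaced by the scalar $k$. A degree-by-degree match of these ranks, combined with the uniqueness statement of Theorem \ref{fundamental}, identifies the two minimal models and thereby establishes that the quotient dga is an algebraic model for the $k$-th component, as claimed.
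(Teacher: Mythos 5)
Your description of the degree-zero part and the intuition that $u\otimes w$ records mapping degree are fine, and you correctly locate the real content of the statement in \cite[Theorem 6.1]{BS}. The paper's proof is, in fact, nothing more than a citation of that theorem together with the observation that $u\otimes w-k$ is a $d$-cycle of degree zero, so $I_k$ is a dga ideal. Where your proposal diverges is in the last step: instead of invoking \cite[Theorem 6.1]{BS} directly, you try to re-derive the quasi-isomorphism by a ``rank comparison of minimal models,'' matching dimensions of indecomposables degree by degree and then appealing to the uniqueness clause of Theorem \ref{fundamental}.

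That last step has a genuine gap. The uniqueness in Theorem \ref{fundamental} says that a given space has a unique minimal model up to isomorphism; it does \emph{not} say that two minimal Sullivan algebras with the same ranks of generators in every degree are isomorphic. The differential is part of the data, and distinct differentials on $\Lambda V$ with identical $V$ routinely give non-isomorphic minimal models (e.g.\ $(\Lambda(x_2,y_3),\,dy=x^2)$ versus $(\Lambda(x_2,y_3),\,d=0)$). So even if you successfully computed $\dim\pi_*(\Map(X,S^{2n};k))\otimes\Q$ from the evaluation fibration and matched it against the indecomposables of the quotient dga, you would not have shown that the two minimal models agree, hence not that the quotient is an algebraic model for the component. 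To make your route work you would need to compare the full dga structures (for instance by producing an actual quasi-isomorphism, or by identifying Whitehead products/Postnikov invariants), at which point you are essentially redoing the proof of \cite[Theorem 6.1]{BS}. A further, smaller issue is that the Brown--Szczarba model is related to $A_{\rm PL}(\Map(X,S^{2n}))$ only by a zig-zag of quasi-isomorphisms, so the ``natural dga map'' to $A_{\rm PL}$ of the component that you posit also needs more care than the sketch provides.
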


	\begin{remark}
		Let $Y$ be a connected CW complex of dimension at most $4n-2$. Then there is a one-to-one correspondence between $H^{2n}(Y;\Z)$ and path-components of $\Map(Y,S^{2n})$. For each $\lambda\in H^{2n}(Y;\Z)$, M\o ller and Raussen \cite[Propositions 2.3, 2.4]{MR} constructed a minimal model for $\Map(Y,S^{2n};\lambda)$ by using the rational Postnikov tower of $S^{2n}$. We will construct minimal models for $\Map(X,S^{2n};k)$ with $k=0,1$ in Corollary \ref{model k=0} and Proposition \ref{model 1}, which are the special cases of M\o ller and Raussen \cite[Propositions 2.3, 2.4]{MR} by Theorem \ref{fundamental}. However, our minimal models are more explicit so that we can determine the self-closeness numbers of $\Map(X,S^{2n};k)_{(0)}$ from them.
	\end{remark}
	We show some properties of $\epsilon(x,y,z)$ that we will use later.
	
	\begin{lemma}
		\label{comm}
		For $x,y,z\in H_*(X)$, we have:
		\[
		\epsilon(x,1,x)=\epsilon(1,x,x)=1,\quad\epsilon(x,y,z)=(-1)^{|x||y|}\epsilon(y,x,z).
		\]
	\end{lemma}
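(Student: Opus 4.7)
The proof is a direct unpacking of the definition $\epsilon(x,y,z)=\langle y^*\smile x^*,z\rangle$, combined with two standard facts about the cup product: that the dual cohomology class of $1\in H_0(X)$ is the unit $1\in H^0(X)$, and that the cup product is graded commutative.

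For the first identity, I would observe that $1^*\in H^0(X)$ is the multiplicative unit of $H^*(X)$, hence
\[
\epsilon(x,1,x)=\langle 1^*\smile x^*,x\rangle=\langle x^*,x\rangle=1
\quad\text{and}\quad
\epsilon(1,x,x)=\langle x^*\smile 1^*,x\rangle=\langle x^*,x\rangle=1,
\]
using that $x^*$ is by definition the dual of $x$ with respect to the chosen basis.

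For the second identity, I would use graded commutativity of the cup product in $H^*(X)$, namely $x^*\smile y^*=(-1)^{|x^*||y^*|}y^*\smile x^*$ together with $|x^*|=|x|$ and $|y^*|=|y|$. Evaluating on $z$ gives
\[
\epsilon(y,x,z)=\langle x^*\smile y^*,z\rangle=(-1)^{|x||y|}\langle y^*\smile x^*,z\rangle=(-1)^{|x||y|}\epsilon(x,y,z),
\]
which is equivalent to the stated formula. Since $(-1)^{|x||y|}$ is its own inverse, the identity may be read in either direction.

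There is essentially no obstacle here: once the definition is written out, both identities reduce to properties of $\smile$ that hold in any graded commutative algebra, so no use of the rational Poincaré structure of $X$ is needed. The lemma is merely a bookkeeping device that will be invoked in later sections to manipulate sums of the form $\sum \epsilon(x_1,x_2,x)(u\otimes x_1)(u\otimes x_2)$ appearing in the differential of the Brown–Szczarba model in Theorem \ref{modelM}.
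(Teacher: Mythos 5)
Your proof is correct and follows essentially the same route as the paper: unpack the definition $\epsilon(x,y,z)=\langle y^*\smile x^*,z\rangle$, use that $1^*$ is the unit of $H^*(X)$ for the first identity, and use graded commutativity of the cup product (with $|x^*|=|x|$) for the second. The only difference is that you spell out the intermediate steps slightly more carefully than the paper does.
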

	
	\begin{proof}
		The first identity follows from
		\[
		\epsilon(x,1,x)=\langle 1\smile x^*,x\rangle=1=\langle x^*\smile 1,x\rangle=\epsilon(1,x,x).
		\]
		The second identity follows from
		\[
		\epsilon(x,y,z)=\langle y^*\smile x^*,z\rangle=(-1)^{|x||y|}\langle x^*\smile y^*,z\rangle=\epsilon(y,x,z)
		\]
		completing the proof.
	\end{proof}

	\begin{lemma}
		\label{asso}
		For $x_1,x_2,x_3,x_4\in H_*(X)$, we have:
		\[
		\sum_{y\in\B}\epsilon(x_1,x_2,y)\epsilon(y,x_3,x_4)=\sum_{z\in\B}\epsilon(x_2,x_3,z)\epsilon(x_1,z,x_4).
		\]
	\end{lemma}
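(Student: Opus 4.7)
The plan is to interpret both sides as the evaluation of a triple cup product on $x_4$, and then invoke the associativity of the cup product.

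First I would observe that since $\{y\mid y\in\B\}$ and $\{y^*\mid y\in\B\}$ are dual bases of $H_*(X)$ and $H^*(X)$, any cohomology class $\alpha\in H^*(X)$ can be expanded as $\alpha=\sum_{y\in\B}\langle\alpha,y\rangle\,y^*$. Applying this to $\alpha=x_2^*\smile x_1^*$ and using the definition of $\epsilon$, one gets
\[
x_2^*\smile x_1^*=\sum_{y\in\B}\epsilon(x_1,x_2,y)\,y^*,
\]
and similarly
\[
x_3^*\smile x_2^*=\sum_{z\in\B}\epsilon(x_2,x_3,z)\,z^*.
\]

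Next I would rewrite both sides of the claimed identity using these expansions together with the definition $\epsilon(a,b,c)=\langle b^*\smile a^*,c\rangle$ and the bilinearity of the cup product and the pairing. The left-hand side becomes
\[
\sum_{y\in\B}\epsilon(x_1,x_2,y)\,\langle x_3^*\smile y^*,x_4\rangle
=\Big\langle x_3^*\smile\Big(\sum_{y\in\B}\epsilon(x_1,x_2,y)\,y^*\Big),x_4\Big\rangle
=\langle x_3^*\smile(x_2^*\smile x_1^*),x_4\rangle,
\]
while the right-hand side becomes
\[
\sum_{z\in\B}\epsilon(x_2,x_3,z)\,\langle z^*\smile x_1^*,x_4\rangle
=\Big\langle\Big(\sum_{z\in\B}\epsilon(x_2,x_3,z)\,z^*\Big)\smile x_1^*,x_4\Big\rangle
=\langle(x_3^*\smile x_2^*)\smile x_1^*,x_4\rangle.
\]
The equality of the two expressions then follows immediately from the associativity of the cup product.

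There is no real obstacle here; the lemma is essentially the statement that the coefficients $\epsilon(x,y,z)$ form the structure constants of cup product with respect to the chosen basis, and associativity of $\smile$ translates into exactly the displayed identity. The only mild point to verify is that the expansions above make sense degreewise, which is guaranteed by \eqref{epsilon degree}, so only finitely many terms contribute on each side.
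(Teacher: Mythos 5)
Your proof is correct and follows essentially the same route as the paper: both arguments expand $x_2^*\smile x_1^*$ (resp.\ $x_3^*\smile x_2^*$) in the dual basis to identify each side with $\langle x_3^*\smile x_2^*\smile x_1^*,x_4\rangle$ and conclude by associativity of the cup product. The only difference is that you spell out the dual-basis expansion a bit more explicitly, but the core computation is identical.
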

	\begin{proof}
		By definition, we have
		\[
		x_2^*\smile x_1^*=\sum_{y\in\B}\epsilon(x_1,x_2,y)y^*.
		\]
		Then we get
		\[
		\langle x_3^*\smile x_2^*\smile x_1^*,x_4\rangle=\sum_{y\in\B}\epsilon(x_1,x_2,y)\langle x_3^*\smile y^*,x_4\rangle=\sum_{y\in\B}\epsilon(x_1,x_2,y)\epsilon(y,x_3,x_4).
		\]
		Quite similarly, we can get
		\[
		\langle x_3^*\smile x_2^*\smile x_1^*,x_4\rangle=\sum_{z\in\B}\epsilon(x_2,x_3,z)\epsilon(x_1,z,x_4).
		\]
		Thus the statement is proved.
	\end{proof}

	
	\section{Degree zero component}
	
	In this section, we compute the self-closeness number of $\Map(X,S^{2n};0)_{(0)}$ by using the algebraic model in Theorem \ref{component model}. Let
	\begin{equation}
		\notag
		\widehat{\B}=\{x\in\B\mid 0<|x|<2n\}.
	\end{equation}
	For a graded set $S$, let $\langle S\rangle$ denote the graded vector space spanned by $S$. We define a graded vector space by
	\begin{equation}
		\label{W}
		W=\langle u\otimes x,v\otimes x,v\otimes w\mid x\in\B_0\sqcup\widehat{\B}\rangle
	\end{equation}
	and a dga $(\Lambda W,d)$ by
	\begin{align*}
		d(u\otimes x)&=0,\\
		d(v\otimes x)&=\sum_{x_1,x_2\in\B}\epsilon(x_1,x_2,x)(u\otimes x_1)(u\otimes x_2),\\
		d(v\otimes w)&=\sum_{x_1,x_2\in\widehat{\B}}\epsilon(x_1,x_2,w)(u\otimes x_1)(u\otimes x_2)
	\end{align*}
	for $x\in\B_0\sqcup\widehat{\B}$.

	\begin{corollary}
		\label{model k=0}
		The dga $(\Lambda W,d)$ is the minimal model for $\Map(X,S^{2n};0)$.
	\end{corollary}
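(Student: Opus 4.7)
The plan is to combine the algebraic model of Theorem \ref{component model} with a direct identification of the quotient $(\Lambda(V\otimes H_*(X))/I_0,d)$ with the explicitly given dga $(\Lambda W,d)$ of \eqref{W}, followed by a verification that $(\Lambda W,d)$ is minimal.

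First, I would identify the underlying graded algebras. Since $u\otimes w$ is a degree-zero generator of $\Lambda(V\otimes H_*(X))$ and $I_0$ is the ideal it generates, the quotient algebra is free commutative on $(V\otimes H_*(X))\setminus\{u\otimes w\}$, which is precisely the generating set of $W$. I would then check that the induced differential agrees with the prescribed one. For each $x\in\B_0\sqcup\widehat{\B}$, a term of $d(v\otimes x)$ involving $u\otimes w$ would require $\epsilon(w,x_2,x)\ne 0$ or $\epsilon(x_1,w,x)\ne 0$, which by \eqref{epsilon degree} would force $|x|\ge 2n$; hence the formula from Theorem \ref{modelM} is already unchanged by the quotient. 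For $v\otimes w$, the offending terms in the formula of Theorem \ref{modelM} are those with some $x_i=w$ (the degree constraint then forces the partner to equal $1$) and those with some $x_j=1$ (the partner is then forced to be $w$); all such terms vanish modulo $I_0$, leaving precisely the sum over $x_1,x_2\in\widehat{\B}$ prescribed in the definition of $(\Lambda W,d)$.

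Second, I would verify minimality. Decomposability of $d$ on generators is immediate from the formulas, since $d$ vanishes on $u$-type generators and sends $v$-type generators to sums of products of pairs of $u$-type generators. For the degree filtration $d(W_n)\subset\Lambda W_{\le n-1}$ required by the definition in Section 2, the only potentially problematic terms arise from $(u\otimes 1)(u\otimes x)$ in $d(v\otimes x)$, because $u\otimes 1$ has the maximal possible degree $2n$ among $u$-type generators. This forces the constraint $|x|\le 2n-2$ for $x\in\widehat{\B}$ and analogously $|x_i|\ge 2$ in the expansion of $d(v\otimes w)$; both reduce to $H_1(X)=0$ (equivalently $H_{2n-1}(X)=0$ by Poincar\'e duality), which holds since $\pi_1(X)$ is finite, exactly as already used in Lemma \ref{1-conn}.

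There is no serious obstacle: the argument is a bookkeeping exercise guided by the degree constraint \eqref{epsilon degree}, and the finiteness of $\pi_1(X)$ enters precisely to eliminate the generators of degree one that would otherwise spoil the Sullivan structure.
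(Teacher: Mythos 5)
Your proposal is correct and follows essentially the same approach as the paper's proof: identify $(\Lambda(V\otimes H_*(X))/I_0,d)$ with $(\Lambda W,d)$ by observing that $u\otimes w$ vanishes in the quotient and does not occur in the remaining differentials, then verify minimality. You spell out more of the bookkeeping — the degree argument via \eqref{epsilon degree} ruling out $u\otimes w$ from $d(v\otimes x)$, and the role of $H_1(X)=0$ in getting the Sullivan degree filtration — which the paper dispatches with ``by definition'' and ``clearly.''
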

	
	\begin{proof}
		By definition, the term $u\otimes w$ is not included in any differential of the quotient dga $(\Lambda(V\otimes H_*(X))/I_0,d)$ in Theorem \ref{component model}, and so $(\Lambda(V\otimes H_*(X))/I_0,d)$ is isomorphic with $(\Lambda W,d)$. Clearly, $(\Lambda W,d)$ is minimal, and the statement is proved.
	\end{proof}

	By Lemmas \ref{NE(minimal)} and \ref{1-conn}, we aim to compute the self-closeness number of $(\Lambda W,d)$ for determining that of $\Map(X,S^{2n};0)_{(0)}$. Let $f\colon(\Lambda W,d)\to(\Lambda W,d)$ be a dga map. We give a matrix representation of $f_0$. For $0\le k<2n$, we equip $\B_k$ with any total order such that
	\[
	\B_k=\{x_1^k<\cdots<x_{b_k}^k\}.
	\]
	Let $W_k$ denote the degree $k$ part of $W$. Then we have
	\[
	W=\left(\bigoplus_{k=0}^{2n-2}W_{2n-k}\right)\oplus\left(\bigoplus_{k=0}^{2n}W_{4n-k-1}\right).
	\]
	For $0\le k\le 2n-2$, we have
	\[
	W_{2n-k}=\langle u\otimes x\mid x\in\B_k\rangle\quad\text{and}\quad W_{4n-k-1}=\langle v\otimes x\mid x\in\B_k\rangle.
	\]
	Moreover, we have $W_{2n-1}=\langle v\otimes w\rangle$. For $0\le k\le 2n-2$, let $A_k(f)$ denote the matrix representation of $f_0\colon W_{2n-k}\to W_{2n-k}$ with respect to the ordered basis $\B_{2n-k}$. For $0\le k\le 2n$, let $B_k(f)$ denote the matrix representation of $f_0\colon W_{4n-k-1}\to W_{4n-k-1}$ with respect to the ordered basis $\B_{4n-k-1}$. Let $C_{ij}$ denote the $(i,j)$-entry of a matrix $C$. Then by definition, we have
	\[
	f_0(u\otimes x_j^k)=\sum_{i=1}^{b_k}A_k(f)_{ij}u\otimes x_i^k\quad\text{and}\quad f_0(v\otimes x_j^k)=\sum_{i=1}^{b_k}B_k(f)_{ij}v\otimes x_i^k
	\]
	for $1\le j\le b_k$. Obviously, a dga map $f$ is an isomorphism if and only if its linear part $f_0$ is an isomorphism. Then the following lemma is immediate from the definition of $A_k(f)$ and $B_k(f)$.

	\begin{lemma}
		\label{upper bound by mat}
		The following are equivalent:
		\begin{enumerate}
			\item $\NE(\Lambda W,d)\le m$;
			
			\item For any dga map $f\colon(\Lambda W,d)\to(\Lambda W,d)$, if $A_k(f)$ and $B_l(f)$ are regular for $2n-m\le k\le 2n-2$ and $4n-m-1\le l\le 2n$, then $A_k(f)$ and $B_l(f)$ are regular for all $0\le k\le 2n-2$ and $0\le l\le 2n$.
		\end{enumerate}
	\end{lemma}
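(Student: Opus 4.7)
The plan is to prove this equivalence by unpacking the definitions and exploiting the graded decomposition of the linear part $f_0$.

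First I would observe that $f_0\colon W\to W$ preserves degrees, so it splits as the direct sum of its restrictions $f_0|_{W_d}$ over the homogeneous components of the grading $W=\bigoplus_d W_d$. The description of $W$ just above the lemma shows that each nonzero $W_d$ is either of the form $\langle u\otimes x\mid x\in\B_{2n-d}\rangle$ (for $2\le d\le 2n$) or of the form $\langle v\otimes x\mid x\in\B_{4n-d-1}\rangle$ (for $2n-1\le d\le 4n-1$), and by definition the matrix representation of the restriction in the given ordered basis is $A_{2n-d}(f)$ or $B_{4n-d-1}(f)$, respectively. The only degrees where the two ranges overlap are $d=2n-1$ and $d=2n$, but in each case one of the sets $\B_1$, $\B_{2n-1}$ is empty, so no conflict arises and every matrix $A_k(f)$ with $0\le k\le 2n-2$ and every $B_l(f)$ with $0\le l\le 2n$ corresponds unambiguously to $f_0$ restricted to a single homogeneous component (possibly the zero subspace, in which case regularity is vacuous).

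Next I would invoke the observation recorded just before the lemma statement: a dga map $f$ of the minimal Sullivan algebra $(\Lambda W,d)$ is an isomorphism if and only if its linear part $f_0$ is a linear isomorphism. Combined with the splitting above, $f$ is a dga isomorphism precisely when every matrix $A_k(f)$ for $0\le k\le 2n-2$ and every $B_l(f)$ for $0\le l\le 2n$ is regular; this is the conclusion in (2).

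Finally I would translate the hypothesis of (2) into the assertion that $f_0$ is an isomorphism in degrees $\le m$. The degree inequality $d\le m$ translates to $k=2n-d\ge 2n-m$ in the $A$-range (together with the range bound $k\le 2n-2$) and to $l=4n-d-1\ge 4n-m-1$ in the $B$-range (together with $l\le 2n$), matching exactly the index bounds in (2). Combining the two translations with the definition of $\NE(\Lambda W,d)\le m$ yields the equivalence. This argument is essentially a bookkeeping exercise; the only mildly subtle point, and no real obstacle, is verifying that the overlap at degrees $2n-1$ and $2n$ causes no trouble thanks to the vanishing of $\B_1$ and $\B_{2n-1}$.
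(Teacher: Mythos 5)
Your proposal is correct and is essentially the argument the paper has in mind — the paper regards the lemma as immediate from the definitions of $A_k(f)$ and $B_k(f)$, and you simply spell out the bookkeeping: $f_0$ is graded, each $A_k$ or $B_l$ is the block of $f_0$ on a single degree, and the two index ranges in (2) are exactly the translations of "regular in all degrees $\le m$" and "regular in all degrees," with the potential overlaps at degrees $2n-1$ and $2n$ being harmless because $\B_1=\B_{2n-1}=\varnothing$.
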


	For $x\in\B_i$ and $k\le i$, we also define an $b_k\times b_{i-k}$ matrix $E_k(x)$ by
	\[
	E_k(x)_{pq}=\epsilon(x_p^k,x_q^{i-k},x).
	\]
	By our choice of the basis $\B$ and the definition of a rational Poincar\'e complex, we have:

	\begin{lemma}
		\label{E(w)}
		The matrix $E_k(w)$ is regular for $0\le k\le 2n$.
	\end{lemma}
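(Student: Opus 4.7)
The plan is a straightforward case analysis in $k$, exploiting the fact that the basis $\B$ was engineered precisely to make the intersection pairing with $w$ as simple as possible.

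First I would dispose of the boundary and trivial cases. For $k=0$ the matrix $E_0(w)$ is the $1\times 1$ matrix $(\epsilon(1,w,w))$, which equals $1$ by Lemma \ref{comm}; symmetrically $E_{2n}(w)=(\epsilon(w,1,w))=(1)$. For $k=1$ and $k=2n-1$ we have $b_1=\dim H_1(X)=0$ because $\pi_1(X)$ is finite, and $b_{2n-1}=\dim H^1(X)=0$ by Poincar\'e duality, so these $E_k(w)$ are $0\times 0$ matrices, regular by convention.

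Next, for $2\le k\le n-1$, the basis $\B_{2n-k}=\{\mathrm{PD}(x)\mid x\in\B_k\}$ was defined exactly so that, ordering $x_q^{2n-k}=\mathrm{PD}(x_q^k)$, one has $\epsilon(x_p^k,\mathrm{PD}(x_q^k),w)=\delta_{pq}$. Thus $E_k(w)$ is literally the identity matrix. For the symmetric range $n+1\le k\le 2n-2$ we set $i=2n-k$, so $\B_k=\{\mathrm{PD}(y)\mid y\in\B_i\}$, and the entry
\[
E_k(w)_{pq}=\epsilon(\mathrm{PD}(x_p^{2n-k}),x_q^{2n-k},w)
\]
becomes $(-1)^{k(2n-k)}\epsilon(x_q^{2n-k},\mathrm{PD}(x_p^{2n-k}),w)=(-1)^{k(2n-k)}\delta_{pq}$ by Lemma \ref{comm}, so $E_k(w)=\pm I$.

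Finally, for the middle degree $k=n$, the matrix $E_n(w)_{pq}=\epsilon(x_p^n,x_q^n,w)$ is, by construction of $\B_n$, either a regular diagonal matrix (when $n$ is even) or the block anti-diagonal matrix with nonzero entries $\pm\lambda_j$ (when $n$ is odd); either way it is invertible. Assembling these five cases completes the proof. There is no real obstacle here: the lemma is essentially a consistency check that the bases $\B_k$ and $\B_{2n-k}$ were chosen dual to one another under the Poincar\'e pairing, and the main thing to keep track of is the Koszul sign that appears when transposing entries via Lemma \ref{comm} in the range $k>n$.
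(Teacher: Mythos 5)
Your proof is correct, and it is essentially the argument the paper has in mind: the paper states this lemma without proof, asserting it is immediate ``by our choice of the basis $\B$ and the definition of a rational Poincar\'e complex,'' and your case analysis is precisely the verification that assertion rests on. Two small points worth noting but not gaps: in Section 4 the paper equips each $\B_k$ with an arbitrary total order (the compatibility condition between $\B_i$ and its dual basis is only imposed later in Section 5), so for $2\le k\le n-1$ you can only conclude $E_k(w)$ is a \emph{permutation} matrix rather than literally the identity, and similarly $\pm$ a permutation matrix for $n+1\le k\le 2n-2$ --- still regular, of course; and the $k=1,\;2n-1$ cases where $b_k=0$ are indeed vacuous, handled by the usual convention that an empty matrix is invertible.
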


	We prove relations among $A_k(f),B_k(f)$ and $E_k(x)$.

	\begin{lemma}
		\label{mat}
		Let $f\colon(\Lambda W,d)\to(\Lambda W,d)$ be a dga map. For each $x^i_p\in\B_i$, we have
		\[
		A_k(f)E_k(x_p^i)A_{i-k}(f)^\textrm{T}=\sum_{a=1}^{b_i}B_i(f)_{ap}E_k(x_a^i)
		\]
		where $0\le k\le i$ for $i<2n$ and $0<k<2n$ for $i=2n$.
	\end{lemma}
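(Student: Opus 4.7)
The plan is to exploit the dga condition $f \circ d = d \circ f$ applied to the generator $v \otimes x_p^i$ and extract the identity living in $\Lambda^2 W$.

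First I would observe that the differential of $(\Lambda W, d)$ strictly raises word length: since $d(u \otimes x) = 0$ and $d(v \otimes x)$ is a sum of quadratic monomials, the Leibniz rule gives $d(\Lambda^p W) \subset \Lambda^{p+1} W$ for every $p \geq 1$. Writing $f(v \otimes x_p^i) = \sum_a B_i(f)_{ap}(v \otimes x_a^i) + \varphi$ and $f(u \otimes y) = f_0(u \otimes y) + \psi_y$ with $\varphi, \psi_y \in \Lambda^{\geq 2} W$, all of $d\varphi$, the cross products $f_0(u \otimes y_1)\psi_{y_2}$ and $\psi_{y_1} f_0(u \otimes y_2)$, and $\psi_{y_1}\psi_{y_2}$ land in $\Lambda^{\geq 3} W$. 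Taking the $\Lambda^2 W$-component of $d(f(v \otimes x_p^i)) = f(d(v \otimes x_p^i))$ thus yields
\begin{equation*}
\sum_a B_i(f)_{ap}\, d(v \otimes x_a^i) \;=\; \sum_{y_1, y_2} \epsilon(y_1, y_2, x_p^i)\, f_0(u \otimes y_1)\, f_0(u \otimes y_2),
\end{equation*}
where the sum runs over $y_1, y_2 \in \B_0 \sqcup \widehat{\B}$ for $i < 2n$ and over $y_1, y_2 \in \widehat{\B}$ for $i = 2n$, reflecting that $u \otimes w = 0$ in $\Lambda W$.

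Next I would substitute $f_0(u \otimes x_t^k) = \sum_r A_k(f)_{rt}(u \otimes x_r^k)$ on the right-hand side and expand $d(v \otimes x_a^i)$ using the formula of Corollary \ref{model k=0} on the left, so that both sides become explicit linear combinations of the monomials $(u \otimes x_r^k)(u \otimes x_s^{i-k})$. Reading off coefficients then produces the scalar relation
\begin{equation*}
\sum_{t, u} A_k(f)_{rt}\, A_{i-k}(f)_{su}\, \epsilon(x_t^k, x_u^{i-k}, x_p^i) \;=\; \sum_a B_i(f)_{ap}\, \epsilon(x_r^k, x_s^{i-k}, x_a^i),
\end{equation*}
which is precisely the $(r,s)$-entry of the matrix identity $A_k(f)\, E_k(x_p^i)\, A_{i-k}(f)^{\mathrm{T}} = \sum_a B_i(f)_{ap}\, E_k(x_a^i)$.

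The main delicacy will be justifying coefficient comparison in the graded-commutative algebra $\Lambda^2 W$. The sign in $(u \otimes y_1)(u \otimes y_2) = (-1)^{|y_1||y_2|}(u \otimes y_2)(u \otimes y_1)$, coming from $|u \otimes y| = 2n - |y|$ together with the evenness of $2n$, matches exactly the symmetry $\epsilon(y_2, y_1, x) = (-1)^{|y_1||y_2|} \epsilon(y_1, y_2, x)$ of Lemma \ref{comm}. This compatibility ensures that whenever two pairs of indices correspond to the same monomial in $\Lambda^2 W$, their contributions agree, so the coefficient comparison is unambiguous. Finally, the stated index restriction $0 < k < 2n$ in the case $i = 2n$ merely records the exclusion of $y_j = w$ from the differential sum, and requires no separate case analysis.
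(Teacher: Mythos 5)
Your proof is correct and follows essentially the same route as the paper: apply $f\circ d=d\circ f$ to the generator $v\otimes x_p^i$, isolate the quadratic (word-length-two) component using the fact that $d$ strictly raises word length, and compare coefficients of the monomials $(u\otimes x_q^k)(u\otimes x_s^{i-k})$. Your remark that graded commutativity of $\Lambda^2 W$ is compatible with the symmetry $\epsilon(y,x,z)=(-1)^{|x||y|}\epsilon(x,y,z)$ of Lemma \ref{comm} makes explicit a point the paper leaves implicit, and your identification of the $i=2n$ restriction $0<k<2n$ with the vanishing of $u\otimes w$ in the quotient is likewise accurate.
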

	
	\begin{proof}
		We only prove the $i<2n$ case, and the $i=2n$ case can be proved verbatim, where the only difference is the range of $k$. Let $i<2n$. The quadratic part of $f(d(v\otimes x_p^i))$ is
		\begin{align*}
			&\sum_{k=0}^i\sum_{r=1}^{b_k}\sum_{t=1}^{b_{i-k}}\epsilon(x_r^k,x_t^{i-k},x_p^i)f_0(u\otimes x_r^k)f_0(u\otimes x_t^{i-k})\\
			&=\sum_{k=0}^i\sum_{q,r=1}^{b_k}\sum_{s,t=1}^{b_{i-k}}\epsilon(x_r^k,x_t^{i-k},x_p^i)A_k(f)_{qr}A_{i-k}(f)_{st}(u\otimes x_q^k)(u\otimes x_s^{i-k})
		\end{align*}
		and the quadratic part of $d(f(v\otimes x_p^i))$ is
		\begin{align*}
			d(f_0(v\otimes x_p^i))&=d\left(\sum_{a=1}^{b_i}B_i(f)_{ap}v\otimes x_a^i\right)\\
			&=\sum_{a=1}^{b_i}\sum_{k=0}^i\sum_{q=1}^{b_k}\sum_{s=1}^{b_{i-k}}\epsilon(x_q^k,x_s^{i-k},x_a)B_i(f)_{ap}(u\otimes x_q^k)(u\otimes x_s^{i-k}).
		\end{align*}
		Then since $f(d(v\otimes z))=d(f(v\otimes z))$ for $z\in\B$, we get
		\[
		\sum_{r=1}^{b_k}\sum_{t=1}^{b_{i-k}}\epsilon(x_r^k,x_t^{i-k},x_p^i)A_k(f)_{qr}A_{i-k}(f)_{st}=\sum_{a=1}^{b_i}\epsilon(x_q^k,x_s^{i-k},x_a)B_i(f)_{ap}
		\]
		for fixed $q=1,...,b_k$, $s=1,...,b_{i-k}$ and $k=0,...,i$. Thus the statement is proved.
	\end{proof}

	\begin{corollary}
		\label{U&V}
		Let $f\colon(\Lambda W,d)\to(\Lambda W,d)$ be a dga map. For $0\le i<2n$, we have
		\[
		B_i(f)=A_0(f)A_i(f)
		\]
	\end{corollary}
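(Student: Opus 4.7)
The plan is to specialize Lemma \ref{mat} to the case $k = 0$ and exploit that $\B_0 = \{1\}$ collapses all matrices indexed by $0$ to something essentially one-dimensional.

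First, I would unpack $E_0(x_p^i)$. Since $\B_0 = \{1\}$, the matrix $E_0(x_p^i)$ is a $1 \times b_i$ row vector whose $q$-th entry is
\[
E_0(x_p^i)_{1,q} = \epsilon(1, x_q^i, x_p^i) = \bigl\langle (x_q^i)^* \smile 1^*, x_p^i \bigr\rangle = \bigl\langle (x_q^i)^*, x_p^i \bigr\rangle = \delta_{p,q},
\]
using that $1^* \in H^0(X)$ is the unit of the cohomology ring and that $(x_q^i)^*$ is dual to $x_q^i$. Hence $E_0(x_p^i)$ is the $p$-th standard basis row vector in $\Q^{b_i}$.

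Next I would substitute this into Lemma \ref{mat} with $k = 0$. Since $b_0 = 1$, the matrix $A_0(f)$ is a single scalar. The left-hand side $A_0(f) E_0(x_p^i) A_i(f)^{\textrm{T}}$ equals that scalar times the $p$-th row of $A_i(f)^{\textrm{T}}$, that is, $A_0(f)$ times the $p$-th column of $A_i(f)$ written as a row. The right-hand side $\sum_{a=1}^{b_i} B_i(f)_{ap} E_0(x_a^i)$ is the $p$-th column of $B_i(f)$ written as a row. Equating the $j$-th coordinates gives
\[
A_0(f) \cdot A_i(f)_{jp} = B_i(f)_{jp}
\]
for all $j$ and $p$, which is exactly $B_i(f) = A_0(f) \cdot A_i(f)$ with $A_0(f)$ interpreted as a scalar.

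There is essentially no obstacle here beyond notational bookkeeping: the computation of $E_0(x_p^i)$ and the scalar nature of $A_0(f)$ are both direct, and once these are in hand the corollary follows by reading off Lemma \ref{mat} at $k=0$.
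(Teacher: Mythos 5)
Your proof is correct and uses essentially the same idea as the paper: both appeal to Lemma \ref{mat} and collapse one index to the one-dimensional $\B_0=\{1\}$ so that the $E$-matrices become standard basis vectors. The only cosmetic difference is that you instantiate the lemma at $k=0$ (making $E_0(x_p^i)$ a standard row vector), whereas the paper instantiates it at $k=i$ (making $E_i(x_p^i)$ a standard column vector and assembling $(E_i(x_1^i),\dots,E_i(x_{b_i}^i))$ into the identity matrix); the two computations are transposes of one another.
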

	
	\begin{proof}
		Clearly, the matrix $(E_i(x_1^i),\ldots,E_i(x_{b_i}^i))$ is the $b_i\times b_i$ identity matrix, and so by Lemma \ref{mat}, we have
		\[
		A_i(f)A_0(f)^\textrm{T}=A_i(f)(E_i(x_1^i),\ldots,E_i(x_{b_i}^i))A_0(f)^\textrm{T}=B_i(f).
		\]
		Since $A_0(f)$ is a $1\times 1$ matrix, the proof is finished.
	\end{proof}

	We will use the following property of $d(X)$.

	\begin{lemma}
		\label{d(M)}
		For $x\in\B$, if $2n-|x|<d(X)$, then $|x|>n$.
	\end{lemma}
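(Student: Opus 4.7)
The plan is to argue by contrapositive: assume $|x|\le n$ and deduce $2n-|x|\ge d(X)$, which contradicts the hypothesis $2n-|x|<d(X)$. The argument rests entirely on Poincar\'e duality for $X$ together with the definition of $d(X)$ as the least integer $d\ge n$ with $H^d(X;\Q)\ne 0$.

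Suppose $x\in\mathcal{B}_k$ with $k=|x|\le n$. By construction of $\mathcal{B}$, the set $\mathcal{B}_k$ is a basis of $H_k(X)$, so in particular $H_k(X)\ne 0$, hence $H^k(X;\Q)\ne 0$. By the rational Poincar\'e duality of $X$, the cap product with a generator of $H_{2n}(X;\Z)$ induces an isomorphism $H^k(X;\Q)\cong H_{2n-k}(X;\Q)\cong H^{2n-k}(X;\Q)^\ast$, so $H^{2n-k}(X;\Q)\ne 0$ as well.

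I would then split into two cases according to whether $k=n$ or $k<n$. If $k=n$, then $H^n(X;\Q)\ne 0$ forces $d(X)=n$, so $2n-|x|=n=d(X)$. If $k<n$, then $2n-k>n$, and since $H^{2n-k}(X;\Q)\ne 0$, the defining minimality of $d(X)$ gives $d(X)\le 2n-k=2n-|x|$. In both cases $2n-|x|\ge d(X)$, completing the contrapositive.

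There is no real obstacle here; the statement is essentially a direct unpacking of the definitions of $d(X)$ and of the basis $\mathcal{B}$ together with Poincar\'e duality. The only mild subtlety is noticing that the case $|x|=n$ must be handled separately, because one needs the equality $d(X)=n$ rather than an inequality obtained from a cohomology class in a degree strictly greater than $n$; this is immediate once one observes that $\mathcal{B}_n\ne\emptyset$ forces $H^n(X;\Q)\ne 0$.
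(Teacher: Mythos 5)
Your proof is correct; it is essentially the same argument as the paper's, merely phrased as a contrapositive with the case split on whether $|x|=n$ or $|x|<n$, whereas the paper argues directly by splitting on whether $d(X)=n$ or $d(X)>n$ (using that Poincar\'e duality forces $\B_i=\varnothing$ for $2n-d(X)<i<d(X)$). The underlying ingredients --- nonemptiness of $\B_{|x|}$, Poincar\'e duality, and the minimality in the definition of $d(X)$ --- are identical in both.
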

	\begin{proof}
		If $d(X)=n$, we have $|x|>2n-d(X)=n$. If $d(X)>n$, we have $|x|\ge d(X)>n$ since by definition, $\B_i=\varnothing$ for $2n-d(X)<i<d(X)$. In either case we have $|x|>n$, completing the proof.
	\end{proof}

	Now we are ready to compute the self-closeness number of $(\Lambda W,d)$.

	\begin{proposition}
		\label{NE 1}
		If $X$ is not primitive, then
		\[
		\NE(\Lambda W,d)=d(X).
		\]
	\end{proposition}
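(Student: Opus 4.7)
My plan is to prove the two inequalities $\NE(\Lambda W,d)\le d(X)$ and $\NE(\Lambda W,d)\ge d(X)$ separately. For the upper bound, I will take an arbitrary dga map $f\colon(\Lambda W,d)\to(\Lambda W,d)$ whose linear part is an isomorphism in every degree $\le d(X)$. By Lemma~\ref{upper bound by mat}, the hypothesis translates to $A_k(f)$ being regular for every $k\in[2n-d(X),2n-2]$; since the non-primitivity of $X$ forces $d(X)\le 2n-2$, the generator $v\otimes w$ of degree $2n-1$ lies strictly above this range. My first step is to deduce $\beta:=B_{2n}(f)_{11}\ne 0$ by applying Lemma~\ref{mat} with $i=2n$ and $k=d(X)$: both $A_{d(X)}(f)$ and $A_{2n-d(X)}(f)$ are in the hypothesis range since $d(X)\ge n$, and taking determinants in $A_{d(X)}(f)E_{d(X)}(w)A_{2n-d(X)}(f)^\textrm{T}=\beta E_{d(X)}(w)$, combined with Lemma~\ref{E(w)}, forces $\beta\ne 0$. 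Once $\beta\ne 0$, Lemma~\ref{mat} applied at $i=2n$ for every $k\in(0,2n)$ immediately yields, via the determinantal identity $\det(A_k(f))\det(A_{2n-k}(f))=\beta^{b_k}$, that every $A_k(f)$ is regular.

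Next I will invoke the non-primitivity of $X$ to produce $c\in\widehat{\B}$ and $a,b\in\widehat{\B}$ with $\epsilon(a,b,c)\ne 0$. Applying Lemma~\ref{mat} with $i=|c|$, $k=|a|$, and $x_p^i=c$, the left-hand side $A_k(f)E_k(c)A_{i-k}(f)^\textrm{T}$ is nonzero because $A_k(f),A_{i-k}(f)$ are regular and $E_k(c)\ne 0$; by Corollary~\ref{U&V} the right-hand side factors as $A_0(f)\sum_{a'}A_i(f)_{a'p}E_k(x_{a'}^i)$, forcing $A_0(f)\ne 0$. Combining $A_0(f)\ne 0$ with Corollary~\ref{U&V} (which gives $B_l(f)=A_0(f)A_l(f)$ for $l<2n$) and $\beta\ne 0$ (for $B_{2n}(f)$) shows every $B_l(f)$ is regular, so $f_0$ and hence $f$ are isomorphisms, completing the proof of the upper bound.

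For the lower bound I will construct an explicit non-invertible dga endomorphism $\phi$ of $(\Lambda W,d)$ whose linear part is the identity in degrees $\le d(X)-1$. I define $\phi$ on generators by $\phi(u\otimes x)=u\otimes x$ when $|x|>2n-d(X)$, $\phi(u\otimes x)=0$ when $|x|\le 2n-d(X)$ (so in particular $\phi(u\otimes 1)=0$), and $\phi(v\otimes x)=0$ for every $x\in\B_0\cup\widehat{\B}\cup\{w\}$, then extend multiplicatively. Since $d(X)\le 2n-2$, the degrees of $v\otimes x$ and of $v\otimes w$ all strictly exceed $d(X)-1$, so the elements of $W$ in degrees $\le d(X)-1$ are exactly the $u\otimes x$ with $|x|>2n-d(X)$, on which $\phi$ is the identity, while $\phi(u\otimes 1)=0$ rules out $\phi$ being an isomorphism. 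The hard part will be verifying $\phi\circ d=d\circ\phi$ on each $v\otimes x$; this reduces to showing that $\sum\epsilon(a,b,x)(u\otimes a)(u\otimes b)$ vanishes when restricted to pairs with $|a|,|b|>2n-d(X)$, which I plan to obtain from the cohomology gap $H^k(X;\Q)=0$ for $k\in(2n-d(X),d(X))$ (implied by the definition of $d(X)$ combined with Poincar\'e duality): any such $a,b\in\B$ must satisfy $|a|,|b|\ge d(X)$, hence $|a|+|b|\ge 2d(X)\ge 2n$, which is incompatible with $|x|\le 2n-2$ for $x\in\B_0\cup\widehat{\B}$, and the only possible equality case ($x=w$, $d(X)=n$, $|a|=|b|=n$) is ruled out by the strict inequality $|a|>n$.
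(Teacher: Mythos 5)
Your proposal is correct and follows essentially the same route as the paper's proof: the upper bound argues via Lemma~\ref{mat} applied at $x_p^i=w$ with $k=d(X)$ to get $B_{2n}(f)$ invertible, then propagates to all $A_k(f)$ with $0<k<2n$, and finally uses non-primitivity together with Corollary~\ref{U&V} to recover $A_0(f)\ne 0$; the lower bound constructs the same explicit non-invertible dga endomorphism killing $u\otimes x$ for $|x|\le 2n-d(X)$ and all $v$-generators, with the chain-map verification resting on the same cohomology-gap observation that the paper isolates as Lemma~\ref{d(M)}. The minor cosmetic differences (phrasing the regularity propagation via determinants, and invoking the gap directly rather than through Lemma~\ref{d(M)}) do not change the substance.
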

	\begin{proof}
		First, we prove $\NE(\Lambda W,d)\le d(X)$. Let $f\colon(\Lambda W,d)\to(\Lambda W,d)$ be a dga map such that $f_0$ is an isomorphism in degrees $\le d(X)$. Then $A_k(f)$ is regular for $2n-k\le d(X)$. By Lemma \ref{upper bound by mat}, it is sufficient to show that $A_k(f)$ and $B_l(f)$ are regular for $0\le k<2n$ and $0\le l\le 2n$. To this end, we take two steps.

		\noindent\textbf{Step 1.} By Lemma \ref{mat}, for $x_p^i=w$, we have
		\begin{equation}
			\label{top}
			A_k(f)E_k(w)A_{2n-k}(f)^\textrm{T}=B_{2n}(f)E_k(w)
		\end{equation}
		for $0<k<2n$, where $B_{2n}(f)$ is a $1\times 1$ matrix. By definition, $b_{d(X)}=b_{2n-d(X)}\ne0$, implying $W_{d(X)}$ and $W_{2n-d(X)}$ are non-trivial. By assumption, $A_{2n-d(X)}(f)$ is regular. Moreover, since $2n-d(X)\le d(X)$, $A_{d(X)}(f)$ is regular too. Then the $1\times 1$ matrix $B_{2n}(f)$ is regular by Lemma \ref{E(w)} and \eqref{top} for $k=d(X)$. This implies that $A_k(f)$ is regular for $0<k<2n$ too by Lemma \ref{E(w)} and \eqref{top}.

		\noindent\textbf{Step 2.} By assumption, there is $0<i<2n$ such that $QH^i(X)\ne H^i(X)$, implying $E_j(x_p^i)$ is non-trivial for some $x_p^i\in\B_j$ and $0<j<i$. This also implies that $W_{2n-i}$, $W_{2n-j}$ and $W_{2n-i+j}$ are non-trivial. On the other hand, by Lemma \ref{mat}, we have
		\[
		A_j(f)E_j(x_p^i)A_{i-j}(f)^\textrm{T}=\sum_{a=1}^{b_i}B_i(f)_{ap}E_j(x_a^i).
		\]
		We have seen in Step 1 that both $A_j(f)$ and $A_{i-j}(f)$ are regular since $0<j,i-j<2n$. Then $A_j(f)E_j(x_p^i)A_{i-j}(f)^\textrm{T}$ is non-trivial, implying that $B_i(f)$ is non-trivial too. Then since $B_i(f)$ is non-trivial and $A_0(f)$ is a $1\times 1$ matrix, it follows from Corollary \ref{U&V} that $A_0(f)$ is regular. Thus we have obtained that $A_k(f)$ is regular for $0\le k<2n$ by Step 1. Moreover, by Corollary \ref{U&V}, $B_l(f)$ is regular for $0\le l<2n$ too, and so we obtained that $B_l(f)$ is regular for $0\le l\le 2n$. Thus we obtain $\NE(\Lambda W,d)\le d(X)$.

		Next, we prove $\NE(\Lambda W,d)\ge d(X)$. Consider a self-map of a commutative graded algebra $g\colon\Lambda W\to\Lambda W$
		such that $g(v\otimes w)=0$, $g(v\otimes x)=0$ and
		\[
		g(u\otimes x)=
		\begin{cases}
			u\otimes x  &2n-|x|<d(X),\\
			0  &2n-|x|\ge d(X)
		\end{cases}
		\]
		for $x\in\B_0\sqcup\widehat{\B}$. Then $g_0$ is an isomorphism in degrees $<d(X)$ and trivial in degree $d(X)$, where $W_{d(X)}$ is non-trivial. By definition, we have
		\[
		dg(u\otimes x)=0=g(d(u\otimes x))
		\]
		for $x\in\B_0\sqcup\widehat{\B}$. For $x\in\B$, $d(v\otimes x)$ is a linear combination of $(u\otimes x_1)(u\otimes x_2)$ such that $|x_1|+|x_2|=|x|$. If $2n-|x_1|<d(X)$ and $2n-|x_2|<d(X)$, we have $|x|=|x_1|+|x_2|>2n$ by Lemma \ref{d(M)}, which is impossible. Thus either $2n-|x_1|\ge d(X)$ or $2n-|x_2|\ge d(X)$, implying that
		\[
		dg(v\otimes x)=0=g(d(v\otimes x))
		\]
		for $x\in\B$. Thus $g$ is a dga map, implying $\NE(\Lambda W,d)\ge d(X)$. Therefore the proof is finished.
	\end{proof}

	\begin{proposition}
		\label{NE 2}
		If $X$ is primitive, then
		\[
		\NE(\Lambda W,d)=2n.
		\]
	\end{proposition}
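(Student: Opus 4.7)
The plan is to adapt the two-step strategy used for Proposition~\ref{NE 1}: prove $\NE(\Lambda W,d)\le 2n$ by a short linear-algebra argument based on Corollary~\ref{U&V}, and prove $\NE(\Lambda W,d)\ge 2n$ by writing down an explicit dga endomorphism of $\Lambda W$ that fails to be invertible but whose linear part is an isomorphism in degrees $\le 2n-1$.

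For the upper bound, assume $f\colon(\Lambda W,d)\to(\Lambda W,d)$ has $f_0$ an isomorphism in degrees $\le 2n$. Lemma~\ref{upper bound by mat} then yields regularity of $A_k(f)$ for all $0\le k\le 2n-2$ and of $B_{2n-1}(f),B_{2n}(f)$. In particular $A_0(f)$, a $1\times 1$ matrix, is a nonzero scalar. Applying Corollary~\ref{U&V} for each $0\le l\le 2n-2$ gives $B_l(f)=A_0(f)A_l(f)$, which is regular since both factors are. Thus every $A_k(f)$ and $B_l(f)$ is regular, $f$ is an isomorphism, and $\NE(\Lambda W,d)\le 2n$ by Lemma~\ref{upper bound by mat}.

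For the lower bound I will construct the algebra endomorphism $g$ of $\Lambda W$ determined on generators by
\[
g(u\otimes 1)=0,\quad g(u\otimes x)=u\otimes x\ \text{for }x\in\widehat{\B},\quad g(v\otimes x)=0\ \text{for }x\in\B_0\sqcup\widehat{\B},\quad g(v\otimes w)=v\otimes w.
\]
Then $g_0$ is the identity on $W_{2n-1}=\langle v\otimes w\rangle$ and on each $W_{2n-k}$ with $1\le k\le 2n-2$, so $g_0$ is an isomorphism in degrees $\le 2n-1$. On the other hand $g_0(u\otimes 1)=0$, so $g$ is not an isomorphism, which forces $\NE(\Lambda W,d)\ge 2n$ as soon as $g$ is shown to be a dga map.

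The main step---and the place where primitivity is essential---is the verification $gd=dg$, which by the derivation property needs only be checked on generators $v\otimes x$ and $v\otimes w$. Primitivity of $X$ says $\Delta(x)=x\otimes 1+1\otimes x$ for every $x\in\widehat{\B}$, so the differentials reduce to
\[
d(v\otimes 1)=(u\otimes 1)^2,\qquad d(v\otimes x)=2(u\otimes 1)(u\otimes x)\ \text{for }x\in\widehat{\B},
\]
using that $|u\otimes 1|=2n$ is even. Each of these polynomials contains $u\otimes 1$ as a factor and is therefore annihilated by $g$, matching $d(g(v\otimes x))=d(0)=0$. For the remaining generator $v\otimes w$, the formula attached to~\eqref{W} expresses $d(v\otimes w)$ as a polynomial in the elements $u\otimes x$ with $x\in\widehat{\B}$, on which $g$ acts as the identity; hence $g(d(v\otimes w))=d(v\otimes w)=d(g(v\otimes w))$. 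This completes the verification that $g$ is a dga map and yields the lower bound.
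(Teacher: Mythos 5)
Your argument is correct and is essentially identical to the paper's own proof: the upper bound via the relation $B_l(f)=A_0(f)A_l(f)$ from Corollary~\ref{U\&V}, and the lower bound via the same endomorphism $g$ (killing $u\otimes 1$ and all $v\otimes x$ with $|x|<2n$, fixing everything else), checked to be a dga map using that primitivity forces $d(v\otimes x)=2(u\otimes 1)(u\otimes x)$ for $x\in\widehat{\B}$ and $d(v\otimes 1)=(u\otimes 1)^2$, while $d(v\otimes w)$ involves only factors $u\otimes x$ with $x\in\widehat{\B}$.
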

	\begin{proof}
		Let $f\colon(\Lambda W,d)\to(\Lambda W,d)$ be a dga map. Suppose that $f_0$ is an isomorphism in degrees $\le 2n$. Then $A_k(f)$ and $B_{2n}(f)$ are regular for $0\le k<2n$. So by Corollary \ref{U&V}, $B_k(f)$ is regular for $0\le k<2n$ too. Thus by Lemma \ref{upper bound by mat}, we obtain $\NE((\Lambda W,d))\le 2n$.

		Consider a self-map of a commutative graded algebra $g\colon\Lambda W\to\Lambda W$
		given by
		\[
		g(u\otimes x)=
		\begin{cases}
			0&|x|=0,\\
			u\otimes x&0<|x|<2n
		\end{cases}
		\quad\text{and}\quad
		g(v\otimes x)=
		\begin{cases}
			0&0\le|x|<2n,\\
			v\otimes x&|x|=2n.
		\end{cases}
		\]
		Then we have
		\[
		dg(u\otimes x)=0=g(d(u\otimes x))
		\]
		for $0\le|x|<2n$. Since $QH^i(X)=H^i(X)$ for $i<2n$, we have $\epsilon(x_1,x_2,x)=0$ for $0\le|x|<2n$ unless $x_1=1$ or $x_2=1$. Then we have
		\[
		d(v\otimes x)=
		\begin{cases}
			(u\otimes1)^2&|x|=0,\\
			2(u\otimes1)(u\otimes x)&0<|x|<2n,
		\end{cases}
		\]
		implying $d(g(v\otimes x))=0=g(d(v\otimes x))$ for $0\le|x|<2n$. By definition, $d(v\otimes w)$ is a linear combination of $(u\otimes x_1)(u\otimes x_2)$ for $0<|x_1|,|x_2|<2n$. Then we have
		\[
		dg(v\otimes w)=d(v\otimes w)=g(d(v\otimes w)).
		\]
		Thus $g$ is a dga map. Clearly, $g_0$ is an isomorphism in degrees $<2n$ and trivial in degree $2n$, where $W_{2n}=\langle u\otimes1\rangle$ is non-trivial. Thus we get $\NE(\Lambda W,d)\ge 2n$, completing the proof.
	\end{proof}
	
		
		\section{Degree one component}
		
		In this section, we determine the self-closeness number of $\Map(X,S^{2n};1)_{(0)}$. To this end, we take two steps. In the first step, we construct the minimal model $(\Lambda\overline{W},d)$ for $\Map(X,S^{2n};1)_{(0)}$ by using its algebraic model in Theorem \ref{component model}, where $\overline{W}$ concentrates in degrees $\le 4n-1$. If there is an isomorphism of dgas
		\[
		(\Lambda U,d)\otimes(\Lambda(s),0)\xrightarrow{\cong}(\Lambda\overline{W},d)
		\]
		where $U$ is the degree $<4n-1$ part of $\overline{W}$ and $|s|=4n-1$, then the self-closeness number of $(\Lambda\overline{W},d)$ turns out to be $4n-1$. To prove the existence of such an isomorphism, it is sufficient to show that a certain element in $(\Lambda U,d)$ of degree $4n$ is a coboundary. In the second step, we show that the sum of the above element of degree $4n$ and a certain coboundary belongs to a vector subspace of $\Lambda U$ having a direct sum decomposition. Then we show that the above sum is trivial in each direct summand, implying that the above element of degree $4n$ turns out to be a coboundary.

		\subsection{Minimal model}
		
		The algebraic model for $\Map(X,S^{2n};1)$ in Theorem \ref{component model} is not minimal, unlikely to the degree zero component in the previous section. Then we construct a minimal model for $\Map(X,S^{2n};1)_{(0)}$ from it. Let $W$ be the graded vector space as in \eqref{W}. We define a dga $(\Lambda W,d)$ by
		\begin{align}
			\label{differential 1}
			d(u\otimes x)&=0,\\
			\label{differential 1.2}
			d(v\otimes x)&=\sum_{x_1,x_2\in\B}\epsilon(x_1,x_2,x)(u\otimes x_1)(u\otimes x_2),\\
			\label{differential 1.3}d(v\otimes w)
			&=2(u\otimes 1)+\sum_{x_1,x_2\in\widehat{\B}}\epsilon(x_1,x_2,w)(u\otimes x_1)(u\otimes x_2)
		\end{align}
		for $x\in\B_0\sqcup\widehat{\B}$, which is different from the one in the previous section. Quite similarly to Corollary \ref{model k=0}, we can see that the dga $(\Lambda W,d)$ is isomorphic with the quotient dga  $(\Lambda(V\otimes H_*(X))/I_1,d)$ in Theorem \ref{component model}, and so we get:

		\begin{lemma}
			The dga $(\Lambda W,d)$ is an algebraic model for $\Map(X,S^{2n};1)_{(0)}$.
		\end{lemma}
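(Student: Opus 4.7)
The plan is to exhibit an explicit isomorphism of dgas between $(\Lambda W,d)$ and the quotient dga $(\Lambda(V\otimes H_*(X))/I_1,d)$ of Theorem \ref{component model}, mirroring the argument sketched in Corollary \ref{model k=0}. Once this is done, the lemma follows at once, since Theorem \ref{component model} identifies the quotient as an algebraic model for $\Map(X,S^{2n};1)$, which is therefore also an algebraic model for $\Map(X,S^{2n};1)_{(0)}$ by the rationalization remark at the start of Section 2.

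First I would match the underlying free commutative graded algebras. The ideal $I_1$ is generated by the degree-zero element $u\otimes w-1$, so passing to the quotient amounts to imposing $u\otimes w=1$. Removing $u\otimes w$ from the basis $\{u\otimes x,\,v\otimes x\mid x\in\B\}$ of $V\otimes H_*(X)$ leaves exactly a basis $\{u\otimes x,\,v\otimes x\mid x\in\B_0\sqcup\widehat{\B}\}\cup\{v\otimes w\}$ of $W$ as defined in \eqref{W}, giving a natural isomorphism $\Lambda(V\otimes H_*(X))/I_1\cong\Lambda W$ of graded algebras.

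Next I would verify that the induced differential on the quotient agrees with \eqref{differential 1}--\eqref{differential 1.3}. For generators $u\otimes x$ this is immediate. For $v\otimes x$ with $x\in\B_0\sqcup\widehat{\B}$, the constraint in \eqref{epsilon degree} gives $|x_1|+|x_2|=|x|<2n$ whenever $\epsilon(x_1,x_2,x)\ne 0$, ruling out $x_1=w$ or $x_2=w$; hence no instance of $u\otimes w$ appears in the sum from Theorem \ref{model} and the formula \eqref{differential 1.2} is inherited verbatim.

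The only substantive computation is for $d(v\otimes w)$, where $|x_1|+|x_2|=2n$ now permits $x_1=w$ (forcing $x_2=1$) or $x_2=w$ (forcing $x_1=1$); by Lemma \ref{comm} one has $\epsilon(w,1,w)=\epsilon(1,w,w)=1$. Splitting the sum from Theorem \ref{model} according to whether $w$ appears among $\{x_1,x_2\}$ yields
\[
d(v\otimes w)=(u\otimes w)(u\otimes 1)+(u\otimes 1)(u\otimes w)+\sum_{x_1,x_2\in\widehat{\B}}\epsilon(x_1,x_2,w)(u\otimes x_1)(u\otimes x_2),
\]
and reducing modulo $I_1$ collapses the first two terms to $2(u\otimes 1)$, recovering \eqref{differential 1.3}. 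The argument is essentially bookkeeping, with no serious obstacle; the only potential pitfall is overlooking the factor of $2$ that arises from the two distinct $w$-containing contributions after the substitution $u\otimes w\mapsto 1$.
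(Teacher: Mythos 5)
Your proposal is correct and follows essentially the same route as the paper, which simply says the quotient $(\Lambda(V\otimes H_*(X))/I_1,d)$ is identified with $(\Lambda W,d)$ ``quite similarly to Corollary \ref{model k=0}'' and leaves the bookkeeping to the reader. You have just made that bookkeeping explicit, correctly isolating the one new wrinkle (the two $w$-containing terms in $d(v\otimes w)$ collapsing to $2(u\otimes 1)$ modulo $I_1$) that does not arise in the $k=0$ case.
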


		By definition, the dga $(\Lambda W,d)$ is not minimal, and so we construct a minimal model for $\Map(X,S^{2n};1)_{(0)}$ from it. Consider an element
		\[
		\eta=d(v\otimes w)-2(u\otimes 1)
		\]
		of $\Lambda W$. Then we have $d\eta=0$. We define
		\[
		v\odot x=(v\otimes w)(u\otimes x)-v\otimes x\text{\quad and\quad}v\odot1=v\otimes 1-\frac{1}{4}(v\otimes w)(2(u\otimes 1)-\eta)
		\]
		for $x\in\widehat{\B}$. Then by \eqref{differential 1}, \eqref{differential 1.2} and \eqref{differential 1.3}, we have
		\begin{align}
			\label{differential 2}
			d(v\odot x)&=\eta(v\otimes x)-\sum_{x_1,x_2\in\widehat{\B}}\epsilon(x_1,x_2,x)(u\otimes x_1)(u\otimes x_2),\\
			\label{differential 3}
			d(v\odot 1)&=\frac{1}{4}\eta^2
		\end{align}
		for $x\in\widehat{\B}$. Consider a vector subspace
		\[
		\overline{W}=\langle u\otimes x,v\odot x,v\odot 1\mid x\in\widehat{\B}\rangle
		\]
		of $\Lambda W$. Then since $\eta\in\Lambda\overline{W}$, it follows from \eqref{differential 1}, \eqref{differential 2} and \eqref{differential 3} that we get a subdga $(\Lambda\overline{W},d)$ of $(\Lambda W,d)$.

		\begin{proposition}
			\label{model 1}
			The dga $(\Lambda\overline{W},d)$ is a minimal model for $\Map(X,S^{2n};1)_{(0)}$.
		\end{proposition}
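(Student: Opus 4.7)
The plan is to establish the proposition via a tensor decomposition
\[
(\Lambda W,d) \;\cong\; (\Lambda\overline{W},d)\otimes(\Lambda(\tilde{u}_1,v\otimes w),d)
\]
in which the second factor is the standard acyclic free dga on an even generator $\tilde{u}_1$ of degree $2n$ and an odd generator $v\otimes w$ of degree $2n-1$, with $d(v\otimes w)=2\tilde{u}_1$. Once such a decomposition is established, the inclusion of $(\Lambda\overline{W},d)$ as a tensor factor is a quasi-isomorphism by the K\"unneth formula, and the preceding lemma identifies this as a minimal model.

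First I would verify that $(\Lambda\overline{W},d)$ is a well-defined sub-dga of $(\Lambda W,d)$. The crucial fact is that $\eta=d(v\otimes w)-2(u\otimes 1)=\sum_{x_1,x_2\in\widehat{\B}}\epsilon(x_1,x_2,w)(u\otimes x_1)(u\otimes x_2)$ by \eqref{differential 1.3}, so $\eta\in\Lambda^2\overline{W}$. Then \eqref{differential 2} and \eqref{differential 3} show that $d(v\odot x)$ and $d(v\odot 1)$ lie in $\Lambda^{\ge 2}\overline{W}$, giving both the sub-dga property and minimality (note also that all generators of $\overline{W}$ have degree $\ge 2$, since $\B_1=\B_{2n-1}=\varnothing$).

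Next I introduce $\tilde{u}_1:=(u\otimes 1)+\tfrac{1}{2}\eta$ of degree $2n$. Then $d\tilde{u}_1=0$ (as $d\eta=0$), and \eqref{differential 1.3} rewrites as $d(v\otimes w)=2\tilde{u}_1$. The substitutions
\[
u\otimes 1=\tilde{u}_1-\tfrac{1}{2}\eta,\quad v\otimes x=(v\otimes w)(u\otimes x)-v\odot x,\quad v\otimes 1=v\odot 1+\tfrac{1}{4}(v\otimes w)(2(u\otimes 1)-\eta)
\]
express the old generators of $W$ in terms of $\overline{W}\sqcup\{\tilde{u}_1,v\otimes w\}$ in a triangular way modulo decomposables, hence yield an isomorphism of free commutative graded algebras. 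Since $d$ preserves $\Lambda\overline{W}$ (by the previous paragraph) and also preserves $\Lambda(\tilde{u}_1,v\otimes w)$ (by $d\tilde{u}_1=0$ and $d(v\otimes w)=2\tilde{u}_1$), this upgrades to an isomorphism of dgas, giving the desired tensor decomposition.

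The factor $(\Lambda(\tilde{u}_1,v\otimes w),d)$ is the standard acyclic free dga with cohomology $\Q$ concentrated in degree zero, so the inclusion $(\Lambda\overline{W},d)\hookrightarrow(\Lambda W,d)$ is a quasi-isomorphism, completing the proof. I expect the main obstacle to be verifying \eqref{differential 3}, i.e., $d(v\odot 1)=\tfrac{1}{4}\eta^2$, which requires carefully expanding $d\bigl((v\otimes w)(2(u\otimes 1)-\eta)\bigr)$ with proper signs (noting $|v\otimes w|=2n-1$ is odd) and using $d(v\otimes 1)=(u\otimes 1)^2$, which follows from \eqref{differential 1.2} together with Lemma \ref{comm} and the fact that $\epsilon(x_1,x_2,1)\ne 0$ forces $x_1=x_2=1$.
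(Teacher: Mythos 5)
Your proof is correct and takes essentially the same route as the paper: you split off the contractible factor $\Lambda(v\otimes w,\,2(u\otimes 1)+\eta)$ (your $\tilde u_1$ is just $\tfrac12(2(u\otimes 1)+\eta)$), and check that the resulting change of generators is triangular modulo decomposables, which is exactly the paper's observation that the multiplication map $(\Lambda\overline W,d)\otimes(\Lambda\widehat W,d)\to(\Lambda W,d)$ has an isomorphic linear part.
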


		\begin{proof}
			By definition, $(\Lambda\overline{W},d)$ is minimal, and so it remains to show that the inclusion $(\Lambda\overline{W},d)\to(\Lambda W,d)$ is a quasi-isomorphism. Consider a vector subspace
			\[
			\widehat{W}=\langle v\otimes w,2(u\otimes1)+\eta\rangle
			\]
			of $\Lambda W$. Then since $d(v\otimes w)=2(u\otimes1)+\eta$, we get a contractible subdga $(\Lambda\widehat{W},d)$ of $(\Lambda W,d)$, and so we get a dga map
			\[
			f\colon(\Lambda\overline{W},d)\otimes(\Lambda\widehat{W},d)\to(\Lambda W,d),\quad x\otimes y\mapsto xy.
			\]
			Clearly, $f_0$ is an isomorphism, hence so is $f$. Thus we may identify the inclusion $(\Lambda\overline{W},d)\to(\Lambda W,d)$ with the inclusion of $(\Lambda\overline{W},d)$ into the first factor of $(\Lambda\overline{W},d)\otimes(\Lambda\widehat{W},d)$, completing the proof.
		\end{proof}

		By Lemmas \ref{NE(minimal)} and \ref{1-conn}, we aim to compute the self-closeness number of $(\Lambda\overline{W},d)$ for determining that of $\Map(X,S^{2n};1)_{(0)}$. Let $U$ be the vector subspace of $\overline{W}$ spanned by elements of degree $\le 4n-2$. Then we have
		\[
		\overline{W}=U\oplus\langle v\odot 1\rangle.
		\]
		By \eqref{differential 1} and \eqref{differential 2}, we get a subdga $(\Lambda U,d)$ of $(\Lambda\overline{W},d)$.

		\begin{proposition}
			\label{split}
			There is an isomorphism
			\[
			(\Lambda U,d)\otimes(\Lambda(s),0)\xrightarrow{\cong}(\Lambda\overline{W},d)
			\]
			where $|s|=4n-1$.
		\end{proposition}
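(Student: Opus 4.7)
The plan is to reduce the proposition to the existence of a primitive for $\frac{1}{4}\eta^2$ inside the subdga $(\Lambda U,d)$. Since $\overline{W}=U\oplus\langle v\odot 1\rangle$ as graded vector spaces, there is a canonical isomorphism of graded commutative algebras $\Lambda\overline{W}\cong\Lambda U\otimes\Lambda(v\odot 1)$. For any $\xi\in(\Lambda U)^{4n-1}$, the assignment given by the identity on $\Lambda U$ together with $s\mapsto v\odot 1-\xi$ defines a graded-algebra isomorphism $\Lambda U\otimes\Lambda(s)\xrightarrow{\cong}\Lambda\overline{W}$, whose inverse sends $a+b\cdot(v\odot 1)\mapsto (a+b\xi)+bs$ for $a,b\in\Lambda U$. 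Note that $(v\odot 1-\xi)^2=0$ automatically by graded commutativity, since both $v\odot 1$ and $\xi$ have odd degree $4n-1$, so the exterior relation $s^2=0$ is preserved. In order for this algebra isomorphism to commute with $d$, it is enough that $d(v\odot 1-\xi)=0$, that is, $d\xi=d(v\odot 1)=\frac{1}{4}\eta^2$ by \eqref{differential 3}.

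Thus the entire proposition reduces to exhibiting an element $\xi\in(\Lambda U)^{4n-1}$ with $d\xi=\frac{1}{4}\eta^2$, where
$\eta=\sum_{x_1,x_2\in\widehat{\B}}\epsilon(x_1,x_2,w)(u\otimes x_1)(u\otimes x_2).$
A natural first attempt is
$\xi_1=\frac{1}{4}\sum_{x,y\in\widehat{\B}}\epsilon(x,y,w)(v\odot x)(u\otimes y),$
since applying $d$ together with the differential formula for $v\odot x$ produces $d\xi_1=\frac{1}{4}\eta^2+R$, in which the remainder $R$ is a linear combination of cubic monomials $(u\otimes x_1)(u\otimes x_2)(u\otimes y)$ whose coefficients are the sums $\sum_{x\in\widehat{\B}}\epsilon(x,y,w)\epsilon(x_1,x_2,x)$. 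By Lemma \ref{asso}, each such coefficient can be rewritten as $\sum_{z\in\widehat{\B}}\epsilon(x_2,y,z)\epsilon(x_1,z,w)$, which suggests that $R$ itself admits a primitive $\xi_2$ built from products of the shape $(v\odot z)(u\otimes x_1)(u\otimes x_2)$; the candidate is then $\xi=\xi_1-\xi_2$, or an analogous explicit combination.

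To verify $d\xi=\frac{1}{4}\eta^2$, I would follow the two-step strategy sketched at the start of this section. First, show that $d\xi-\frac{1}{4}\eta^2$ lies in the finite-dimensional subspace $V\subset(\Lambda U)^{4n}$ spanned by monomials purely in the generators $u\otimes y$ for $y\in\widehat{\B}$. Second, decompose $V$ as a direct sum indexed by the unordered multiset of basis elements of $\widehat{\B}$ appearing in each monomial, and check that the contribution to each summand vanishes. On each summand the required vanishing reduces to a scalar identity among products $\epsilon(a,b,c)\epsilon(a',b',c')$, and such identities can be proved by iterated applications of Lemma \ref{asso} (to rearrange the nesting of cup products) together with Lemma \ref{comm} (to handle the signs arising from graded commutativity).

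The main obstacle will be pinning down the correct explicit form of the correction $\xi_2$ and keeping track of the signs produced by the mixed parities of the generators of $U$: the generator $u\otimes x$ (resp.\ $v\odot x$) is even precisely when $|x|$ is even (resp.\ odd), so swaps of factors in a monomial produce Koszul signs depending delicately on the specific basis elements involved. These signs must combine precisely to yield cancellation on every summand in the decomposition of $V$. Once the correct $\xi$ is pinned down and the signs are tabulated, the remaining argument should be a long but systematic bookkeeping using only the identities in Lemmas \ref{comm} and \ref{asso}.
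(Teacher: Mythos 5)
Your reduction of Proposition \ref{split} to the exactness of $\frac{1}{4}\eta^2$ in $(\Lambda U,d)$ is correct and coincides with the paper's (the decomposability caveat is automatic, since $U$ has no generators in degree $4n-1$). Note also that $\epsilon(x,y,w)\ne0$ forces $y=\hat{x}$, so your $\xi_1=\frac14\sum_{x,y}\epsilon(x,y,w)(v\odot x)(u\otimes y)$ is exactly $\frac14\alpha$ from Lemma \ref{alpha xi}, and the remainder $R$ you describe is $-\frac14\xi$ with $\xi$ as in \eqref{xi}. Up to this point you are on the paper's track.

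The gap is in the correction $\xi_2$. You propose to build it from triple products $(v\odot z)(u\otimes x_1)(u\otimes x_2)$. But since $d(u\otimes x)=0$ and, by \eqref{differential 2}, $d(v\odot z)$ is a sum of quadratic and cubic words in the generators $u\otimes x$, the differential of any such triple product is a sum of quartic and quintic words, never cubic. The remainder $R$, however, lies in $\mathcal{U}$, i.e.\ it is cubic in the $u\otimes x$. So no combination of your proposed monomials can have differential equal to $-R$: the word-lengths are mismatched. The correct shape is two factors, $(v\odot x)(u\otimes\hat{x})$, as in the paper's $\mu(x)=(-1)^{|x||\hat{x}|}(v\odot x)(u\otimes\hat{x})-(v\odot\hat{x})(u\otimes x)$. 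More importantly, the actual content of the proposition is not the reduction but the exhibition and verification of the primitive. The paper takes $\mu=\sum_{x\in\B_-}\lambda(x)\mu(x)$ with the carefully calibrated coefficients $\lambda(x)=\frac{3(n-|x|)}{n}\epsilon(\hat{x})$, then proves $\xi=d\mu$ by filtering $\widehat{\B}$, decomposing $\mathcal{U}=\mathcal{V}_1\oplus\cdots\oplus\mathcal{V}_m$, introducing derivations $\partial_i$ injective on $\mathcal{V}_i$, and reducing the coefficient match to the identity $\sum_i\epsilon(\hat{x}_i)\epsilon(x_i)\lambda(x_i)=3$ whenever $|x_1|+|x_2|+|x_3|=2n$ (Lemma \ref{lambda}), which is exactly what pins down the factor $3(n-|x|)/n$. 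Your sketch (decompose by multisets of basis elements, then check termwise with Lemmas \ref{comm} and \ref{asso}) leaves all of this undone and offers no mechanism $\textemdash$ in particular no choice of coefficients $\textemdash$ for the would-be cancellations; Lemmas \ref{comm} and \ref{asso} alone are not enough, since the argument also needs the numerical relation encoded in $\lambda$.
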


		Assuming Proposition \ref{split}, we determine $\NE(\Lambda\overline{W},d)$.

		\begin{proposition}
			\label{NE 3}
			$\NE(\Lambda\overline{W},d)=4n-1$.
		\end{proposition}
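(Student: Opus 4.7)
The plan is to use the isomorphism of Proposition \ref{split} to reduce the problem to a straightforward degree count, since $\overline{W}$ is concentrated in degrees $\le 4n-1$ and the splitting exhibits a free factor $\Lambda(s)$ on a single closed generator in the top degree.

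For the upper bound $\NE(\Lambda\overline{W},d)\le 4n-1$, I would argue as follows. By construction $\overline{W}$ is spanned by $u\otimes x$ (of degree $2n-|x|\le 2n-2$), $v\odot x$ (of degree $4n-1-|x|\le 4n-3$) for $x\in\widehat{\B}$, together with $v\odot 1$ in degree $4n-1$. Thus $\overline{W}$ has no elements in degree $>4n-1$. Hence if $f\colon(\Lambda\overline{W},d)\to(\Lambda\overline{W},d)$ is a dga map whose linear part $f_0$ is an isomorphism in degrees $\le 4n-1$, then $f_0$ is an isomorphism on all of $\overline{W}$, so $f$ itself is an isomorphism of dgas.

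For the lower bound $\NE(\Lambda\overline{W},d)\ge 4n-1$, I would use Proposition \ref{split} to write $(\Lambda\overline{W},d)\cong(\Lambda U,d)\otimes(\Lambda(s),0)$ with $|s|=4n-1$, and define a self-map $g$ of the right-hand side which is the identity on $\Lambda U$ and sends $s\mapsto 0$. Since $ds=0$, the map $g$ is a well-defined dga map. Its linear part is the identity on $U$ and zero on $\langle s\rangle$; as $U$ is concentrated in degrees $\le 4n-2$, $g_0$ is an isomorphism in all degrees $\le 4n-2$, while $g_0$ fails to be injective in degree $4n-1$. In particular $g$ is not an isomorphism, and transporting this example along the isomorphism of Proposition \ref{split} gives a dga self-map of $(\Lambda\overline{W},d)$ whose linear part is an iso in degrees $\le 4n-2$ but which is not an iso.

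Combining the two bounds yields the equality. The content of the proposition is entirely carried by Proposition \ref{split}: once the free tensor splitting is available, the argument reduces to the two short observations above, so the main obstacle lies in establishing the splitting itself rather than in this proposition.
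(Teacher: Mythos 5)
Your proposal is correct and follows essentially the same route as the paper: the upper bound comes from the fact that $\overline{W}$ is concentrated in degrees $\le 4n-1$, and the lower bound comes from using the splitting of Proposition \ref{split} to construct a self-map that is an isomorphism below degree $4n-1$ but kills the generator $s$. Your write-up just spells out the explicit map $g$ sending $s\mapsto 0$ where the paper leaves that construction as an easy exercise.
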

		
		\begin{proof}
			The largest degree of elements of $\overline{W}$ is $4n-1$, and so $\NE(\Lambda\overline{W},d)\le 4n-1$. By Proposition \ref{split}, it is easy to construct a dga map $f\colon(\Lambda\overline{W},d)\to(\Lambda\overline{W},d)$ such that $f_0\vert_U$ is an isomorphism but $f_0$ itself is not an isomorphism. Then $\NE(\Lambda\overline{W},d)\ge 4n-1$, completing the proof.
		\end{proof}

		Now we are ready to prove Theorem \ref{main}.
		
		\begin{proof}
			[Proof of Theorem \ref{main}]
			Combine Propositions \ref{NE 1}, \ref{NE 2} and \ref{NE 3}.
		\end{proof}

		It remains to prove Proposition \ref{split}. Suppose that there is a decomposable element $\zeta$ of $\Lambda U$ such that
		\begin{equation}
			\label{zeta}
			\frac{1}{4}\eta^2=d\zeta.
		\end{equation}
		Then we can define a dga map
		\[
		f\colon(\Lambda U,d)\otimes(\Lambda(s),0)\to(\Lambda\overline{W},d)
		\]
		by $f(x\otimes 1)=x$ for $x\in\Lambda U$ and $f(1\otimes s)=v\odot 1-\zeta$. Indeed, $df(x\otimes 1)=dx=f(dx\otimes 1)$ since $dx\in\Lambda U$, and
		\[
		ds=0=d(v\odot 1)-\frac{1}{4}\eta^2=d(v\odot 1-\zeta)
		\]
		by \eqref{differential 3}. Since $\zeta$ is decomposable, $f_0$ is an isomorphism, and so $f$ is an isomorphism too. Thus Proposition \ref{split} is proved by the following lemma.

		\begin{lemma}
			\label{zeta lemma}
			There is a decomposable element $\zeta$ of $\Lambda U$ satisfying \eqref{zeta}.
		\end{lemma}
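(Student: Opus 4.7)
The plan is to produce $\zeta$ explicitly. Guided by the identity
\[
\eta \;=\; \sum_{y_1, y_2 \in \widehat{\B}} \epsilon(y_1, y_2, w)(u \otimes y_1)(u \otimes y_2)
\]
and the formula \eqref{differential 2}, a natural first candidate is
\[
\zeta_0 \;=\; \tfrac{1}{4}\sum_{y_1, y_2 \in \widehat{\B}} \epsilon(y_1, y_2, w)(v \odot y_1)(u \otimes y_2),
\]
which is decomposable of degree $4n - 1$. Since $d(u \otimes y_2) = 0$, direct use of \eqref{differential 2} gives
\[
d\zeta_0 \;=\; \tfrac{1}{4}\eta^2 \;-\; \tfrac{1}{4}\Sigma,
\]
where $\Sigma = \sum_{y_1, y_2 \in \widehat{\B}} \epsilon(y_1, y_2, w)\,\beta_{y_1}(u \otimes y_2)$ and $\beta_z = \sum_{x_1, x_2 \in \widehat{\B}} \epsilon(x_1, x_2, z)(u \otimes x_1)(u \otimes x_2)$. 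By Lemma~\ref{asso}, the inner sum in $\Sigma$ reorganizes so that its structure constants are the triple intersection numbers $\langle y_2^* \smile x_2^* \smile x_1^*, w \rangle$; thus $\Sigma$ is in general a nonzero cubic error in the generators $u \otimes z$ that must be absorbed.

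To absorb it, I would take $\zeta = \zeta_0 + \chi$, where
\[
\chi \;=\; \sum_{z, y \in \widehat{\B},\, |z| + |y| = 2n} c_{z, y}(v \odot z)(u \otimes y)
\]
for coefficients $c_{z, y}$ chosen so that (i) $\sum_{z, y} c_{z, y}(u \otimes z)(u \otimes y)$ vanishes in $\Lambda U$---after sorting signs via $|u \otimes z||u \otimes y| \equiv |z||y| \bmod 2$, this forces the $c_{z, y}$ to be graded-antisymmetric in $(z, y)$---so that no new quartic error is introduced by $d\chi$; and (ii) the remaining cubic part $-\sum_{z, y} c_{z, y}\,\beta_z(u \otimes y)$ of $d\chi$ equals $\tfrac{1}{4}\Sigma$, exactly cancelling the error from $\zeta_0$. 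Together, these conditions yield $d\zeta = \tfrac{1}{4}\eta^2$.

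To solve (i) and (ii) simultaneously, I would introduce a filtration on the degree-$4n$ cubic subspace of $\Lambda U$, indexed by the multiset of degrees $\{|x_1|, |x_2|, |y|\}$ appearing in a cubic monomial $(u \otimes x_1)(u \otimes x_2)(u \otimes y)$. This is the filtration motivated by the cell/handle structure of the Poincar\'e complex $X$ alluded to in the acknowledgement. The resulting direct sum decomposition lets us examine the equation in (ii) summand by summand, with the nondegeneracy of $E_k(w)$ (Lemma~\ref{E(w)}) providing the invertibility needed to solve for $c_{z, y}$ and Lemma~\ref{asso} used repeatedly to collapse composite products of $\epsilon$'s. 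The main obstacle will be performing this matching consistently: the antisymmetry constraint (i) is a global constraint that couples different direct summands, so one must verify that the linear equations arising from (ii) in each summand are simultaneously solvable without contradiction, producing a single $\chi$ that annihilates the cubic error $\Sigma$ everywhere at once.
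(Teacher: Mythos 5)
Your first step is exactly right and coincides with the paper's: unwinding the condition $\epsilon(y_1,y_2,w)\ne 0 \Rightarrow y_2=\hat{y}_1$, your $\zeta_0$ equals $\frac{1}{4}\alpha$ where $\alpha=\sum_{x\in\widehat{\B}}\epsilon(x)(v\odot x)(u\otimes\hat{x})$, and $d\zeta_0=\frac{1}{4}\eta^2-\frac{1}{4}\xi$, so the remaining task is precisely to exhibit $\frac{1}{4}\xi$ as the differential of a decomposable element of $\Lambda U$. You also correctly identify the natural ansatz $\chi=\sum c_{z,y}(v\odot z)(u\otimes y)$ and the two constraints on the coefficients: the graded antisymmetry that kills the quartic $\eta$--part of $d\chi$, and the cubic matching equation $-\sum c_{z,y}\beta_z(u\otimes y)=\frac{1}{4}\Sigma$.

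The gap is that you stop there: you do not produce $\chi$, and you yourself flag the reason this is nontrivial, namely that the antisymmetry constraint is \emph{global} and couples the ``summands'' of whatever decomposition of the degree-$4n$ cubic subspace you use, so an existence argument by solving linear equations summand-by-summand is not available as stated. This is exactly the load-bearing difficulty of the lemma, and an abstract appeal to nondegeneracy of $E_k(w)$ and Lemma~\ref{asso} does not discharge it. The paper resolves it the other way around: it \emph{guesses} the coefficients in closed form, $\lambda(x)=\frac{3(n-|x|)}{n}\epsilon(\hat{x})$ for $x\in\B_-$ (so $c_{z,y}\ne 0$ only for $y=\hat{z}$), sets $\mu=\sum_{x\in\B_-}\lambda(x)\big((-1)^{|x||\hat{x}|}(v\odot x)(u\otimes\hat{x})-(v\odot\hat{x})(u\otimes x)\big)$, and then uses the filtration $\mathcal{U}=\bigoplus_i\mathcal{V}_i$ together with the derivations $\partial_i$ purely as a device to \emph{verify} the identity $\xi=d\mu$, not to solve for coefficients. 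The identity that makes the arithmetic close, $\sum_{i=1}^3\epsilon(\hat x_i)\epsilon(x_i)\lambda(x_i)=3$ whenever $|x_1|+|x_2|+|x_3|=2n$ (Lemma~\ref{lambda}), is the crucial extra input and is nowhere in your sketch. Also note that the filtration used in the paper is by a chosen total order on the basis of $\widehat{\B}$, isolating one generator $\theta_i$ at a time; a filtration by the multiset of degrees, as you propose, would lump together all basis vectors of a given degree and does not obviously render the system triangular. As written, the proposal is a correct reduction plus a plan, but it is missing the construction that constitutes the proof.
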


		\subsection{The vector space $\mathcal{U}$}
		
		To prove Lemma \ref{zeta lemma}, we define a vector subspace
		\[
		\mathcal{U}=\langle(u\otimes x_1)(u\otimes x_2)(u\otimes x_3)\mid|x_1|+|x_2|+|x_3|=2n\rangle
		\]
		of $\Lambda U$. First, we will find an element $\xi$ of $\mathcal{U}$ and a decomposable element $\alpha$ of $\Lambda U$ such that
		\begin{equation}
			\label{alpha}
			\eta^2=\xi+d\alpha.
		\end{equation}
		For each $x\in\B_i\subset\widehat{\B}$, there is a unique element $\hat{x}\in\B$ such that $\epsilon(x,\hat{x},w)\ne 0$, where $|x|+|\hat{x}|=2n$. We abbreviate $\epsilon(x,\hat{x},w)$ by $\epsilon(x)$. Since $\hat{\hat{x}}=x$, we have
		\[
		\eta=\sum_{x\in\widehat{\B}}\epsilon(x)(u\otimes x)(u\otimes\hat{x}).
		\]
		We define
		\begin{equation}
			\label{xi}
			\xi=\sum_{x_1,x_2,x_3\in\widehat{\B}}\epsilon(x_3)\epsilon(x_1,x_2,x_3)(u\otimes x_1)(u\otimes x_2)(u\otimes\hat{x}_3).
		\end{equation}
		Then $\xi$ is a decomposable element of $\Lambda U$.

		\begin{lemma}
			\label{alpha xi}
			There is a decomposable element $\alpha\in\Lambda U$ satisfying \eqref{alpha}.
		\end{lemma}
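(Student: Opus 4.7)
The plan is to exhibit $\alpha$ explicitly as a sum of two-fold products of generators of $U$, and then verify \eqref{alpha} by a direct Leibniz computation. Counting degrees gives $|\eta^2|=|\xi|=4n$, so $\alpha$ must live in degree $4n-1$. The generators of $U$ are $u\otimes x$ (degree $2n-|x|$) and $v\odot x$ (degree $4n-1-|x|$) for $x\in\widehat{\B}$, and among two-fold products the only way to reach degree $4n-1$ is via $(v\odot x)(u\otimes y)$ with $|x|+|y|=2n$, i.e.\ $y=\hat{x}$. Since $\eta$ itself has the shape $\sum_{x}\epsilon(x)(u\otimes x)(u\otimes\hat{x})$, it is natural to weight each summand by $\epsilon(x)$.

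Accordingly I would set
\[
\alpha:=\sum_{x\in\widehat{\B}}\epsilon(x)\,(v\odot x)(u\otimes\hat{x}),
\]
which is visibly decomposable and lies in $\Lambda U$ (note that if $x\in\widehat{\B}$ then $\hat{x}\in\widehat{\B}$ as well). To compute $d\alpha$, I would combine \eqref{differential 1} with the differential of $v\odot x$ obtained from its definition $v\odot x=(v\otimes w)(u\otimes x)-v\otimes x$ together with \eqref{differential 1.2} and \eqref{differential 1.3}; after the $2(u\otimes 1)(u\otimes x)$ contributions cancel, one obtains $d(v\odot x)=\eta(u\otimes x)-\sum_{x_1,x_2\in\widehat{\B}}\epsilon(x_1,x_2,x)(u\otimes x_1)(u\otimes x_2)$. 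Multiplying on the right by $\epsilon(x)(u\otimes\hat{x})$ and summing, the first piece collects into
\[
\eta\cdot\sum_{x\in\widehat{\B}}\epsilon(x)(u\otimes x)(u\otimes\hat{x})=\eta\cdot\eta=\eta^2,
\]
while the second piece, after renaming $x$ as $x_3$, matches exactly the definition \eqref{xi} of $\xi$, contributing $-\xi$. Hence $d\alpha=\eta^2-\xi$, which is \eqref{alpha}.

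I do not foresee a genuine obstacle. The only real bookkeeping concern is graded-commutative sign juggling when reassembling $\eta$ out of the partial sum and when pushing $(u\otimes\hat{x})$ past the quadratic term $(u\otimes x_1)(u\otimes x_2)$, but this collapses because $|\eta|=2n$ is even (so $\eta$ is central) and because $\alpha$ is written with the $(v\odot x)$-factor uniformly on the left and the $(u\otimes\hat{x})$-factor uniformly on the right throughout the sum. No manipulation of Poincar\'e-dual pairings beyond $\hat{\hat{x}}=x$ and $\epsilon(\hat{x})(u\otimes\hat{x})(u\otimes x)=\epsilon(x)(u\otimes x)(u\otimes\hat{x})$ is needed.
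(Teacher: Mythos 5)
Your choice $\alpha=\sum_{x\in\widehat{\B}}\epsilon(x)(v\odot x)(u\otimes\hat{x})$ and the ensuing Leibniz computation reproduce the paper's proof exactly, including the correct reading of $d(v\odot x)=\eta(u\otimes x)-\sum\epsilon(x_1,x_2,x)(u\otimes x_1)(u\otimes x_2)$ despite the evident typo $\eta(v\otimes x)$ in the displayed formula \eqref{differential 2}. The proposal is correct and takes essentially the same approach as the paper.
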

		
		\begin{proof}
			Let
			\[
			\alpha=\sum_{x\in\widehat{\B}}\epsilon(x)(v\odot x)(u\otimes\hat{x})
			\]
			Then $\alpha$ is a decomposable element of $\Lambda U$, and by \eqref{differential 1} and \eqref{differential 2} we have
			\begin{align*}
				d\alpha&=d\left(\sum_{x\in\widehat{\B}}\epsilon(x)(v\odot x)(u\otimes\hat{x})\right)\\
				&=\sum_{x\in\widehat{\B}}\epsilon(x)(d(v\odot x))(u\otimes\hat{x})\\
				&=\sum_{x\in\widehat{\B}}\epsilon(x)\left(\eta(u\otimes x)(u\otimes\hat{x})-\sum_{x_1,x_2\in\widehat{\B}}\epsilon(x_1,x_2,x)(u\otimes x_1)(u\otimes x_2)(u\otimes\hat{x})\right)\\
				&=\eta^2-\xi.
			\end{align*}
			So the statement is proved.
		\end{proof}

		Next we define a decomposable element $\mu\in\Lambda U$ such that $d\mu\in\mathcal{U}$. Let
		\[
		\B_-=\{x\in\B\mid 0<|x|<n\}.
		\]
		For $x\in\B_-$, let
		
		\[
		\lambda(x)=\frac{3(n-|x|)}{n}\epsilon(\hat{x}),\quad\mu(x)=(-1)^{|x||\hat{x}|}(v\odot x)(u\otimes\hat{x})-(v\odot\hat{x})(u\otimes x)
		\]
		and let
		\[
		\mu=\sum_{x\in\B_-}\lambda(x)\mu(x).
		\]
		By \eqref{differential 1} and \eqref{differential 2} we have
		\begin{align}
			\label{dmu} d\mu(x)&=d\left((-1)^{|x||\hat{x}|}(v\odot x)(u\otimes\hat{x})-(v\odot\hat{x})(u\otimes x)\right)\\
			\notag&=(-1)^{|x||\hat{x}|}\left(d(v\odot x)\right)(u\otimes\hat{x})-\left(d(v\odot\hat{x})\right)(u\otimes x)\\
			\notag&=\sum_{x_1,x_2\in\widehat{\B}}\epsilon(x_1,x_2,\hat{x})(u\otimes x_1)(u\otimes x_2)(u\otimes x)\\
			\notag&\quad-(-1)^{|x||\hat{x}|}\sum_{x_1,x_2\in\widehat{\B}}\epsilon(x_1,x_2,x)(u\otimes x_1)(u\otimes x_2)(u\otimes\hat{x}).
		\end{align}
		Then it is easy to see that $d\mu(x)$ belongs to $\mathcal{U}$, hence so does $d\mu$ too.

		\begin{proposition}
			\label{mu}
			There is an equality
			\[
			\xi=d\mu.
			\]
		\end{proposition}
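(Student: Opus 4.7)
The plan is a direct coefficient comparison in $\mathcal{U}$. Since both $\xi$ and $d\mu$ lie in $\mathcal{U}$, it suffices to match, for each triple $(y_1,y_2,y_3)$ with $y_i\in\widehat{\B}$ and $|y_1|+|y_2|+|y_3|=2n$, the coefficients of the ordered monomial $(u\otimes y_1)(u\otimes y_2)(u\otimes y_3)$ on each side. Throughout, I would use the parity identity $|x||\hat{x}|\equiv|x|\pmod2$, valid because $|x|+|\hat{x}|=2n$, together with the relation $\epsilon(x)=(-1)^{|x|}\epsilon(\hat{x})$ that follows from Lemma \ref{comm}.

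For $\xi$, I would first reindex $x_3\mapsto\hat{x}_3$ to write
\[
\xi=\sum_{y_1,y_2,y_3\in\widehat{\B}}\epsilon(\hat{y}_3)\epsilon(y_1,y_2,\hat{y}_3)(u\otimes y_1)(u\otimes y_2)(u\otimes y_3).
\]
Applying Lemma \ref{asso} with $x_4=w$, the quantity $\epsilon(\hat{y}_i)\epsilon(y_j,y_k,\hat{y}_i)$ equals the triple cup product $\langle y_\ell^*\smile y_m^*\smile y_n^*,w\rangle$ for some permutation, which in turn equals $(-1)^\sigma\langle y_1^*\smile y_2^*\smile y_3^*,w\rangle$ for a common sign $\sigma$ depending only on the parities of the $|y_i|$. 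After absorbing the graded-commutativity signs picked up when rearranging $(u\otimes y_j)(u\otimes y_k)(u\otimes y_i)$ into $(u\otimes y_1)(u\otimes y_2)(u\otimes y_3)$, I would verify that each of the six contributions (three positions for $\hat{x}_3$ times two orderings of $(x_1,x_2)$) adds the same quantity $(-1)^\sigma\langle y_1^*\smile y_2^*\smile y_3^*,w\rangle$. Hence the coefficient in $\xi$ equals $6(-1)^\sigma\langle y_1^*\smile y_2^*\smile y_3^*,w\rangle$.

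For $d\mu$, I would expand \eqref{dmu} and note that $\lambda(x)d\mu(x)$ contributes to the fixed monomial only when the outer factor matches some $u\otimes y_i$: this happens either for $x=y_i\in\B_-$ (first sum) or for $x=\hat{y}_i$ with $|y_i|>n$ (second sum). When $y_i\in\B_n$, neither case applies, but this is harmless because $n-|y_i|=0$ makes $\lambda$ vanish anyway. Using the substitution $\lambda(\hat{y}_i)=\tfrac{3(n-|\hat{y}_i|)}{n}\epsilon(y_i)$ together with the two parity identities above and the factor $-(-1)^{|x||\hat{x}|}$ in front of the second sum of \eqref{dmu}, I would show that both the first-sum and the sign-twisted second-sum contributions reduce to the uniform expression
\[
\tfrac{6(n-|y_i|)}{n}(-1)^\sigma\langle y_1^*\smile y_2^*\smile y_3^*,w\rangle.
\]
Summing over $i=1,2,3$ and invoking the telescoping identity
\[
\sum_{i=1}^{3}(n-|y_i|)=3n-\sum_{i=1}^{3}|y_i|=3n-2n=n,
\]
the total coefficient in $d\mu$ becomes $6(-1)^\sigma\langle y_1^*\smile y_2^*\smile y_3^*,w\rangle$, matching $\xi$.

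The main obstacle is the sign bookkeeping: verifying that the second-sum contribution (where the outer factor is $u\otimes\hat{x}$) really collapses onto the same uniform formula as the first-sum one after all the sign substitutions. The precise coefficient $\tfrac{3(n-|x|)}{n}$ in $\lambda(x)$ is calibrated exactly so that the three position contributions combine via the numerical identity $\sum_i(n-|y_i|)=n$ to reproduce the factor $6$ appearing in $\xi$, independently of the degree distribution of $y_1,y_2,y_3$.
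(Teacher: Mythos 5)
Your high-level plan is essentially the paper's: both arguments compare, in the subspace $\mathcal{U}$, the contributions of $\xi$ and $d\mu$ attached to a fixed monomial in three $u\otimes(\cdot)$'s, and both hinge on the numerical calibration that $\lambda$ has been normalized so that the three ``position'' contributions combine through $\sum_{i=1}^{3}(n-|y_i|)=n$. That calibration is exactly what the paper packages as Lemma~\ref{lambda}, and your use of Lemma~\ref{asso} and of Poincar\'e duality to rewrite $\epsilon(\hat y_i)\epsilon(y_j,y_k,\hat y_i)$ as a symmetric triple product is precisely what Lemma~\ref{degree}(3) encodes.

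The gap is your claimed universal coefficient $6$. That factor $6$ comes from counting ``three positions for $\hat{x}_3$ times two orderings of $(x_1,x_2)$'', and this counts correctly only when $y_1,y_2,y_3$ are pairwise distinct. When two of the $y_i$ coincide, the number of ordered triples contributing to the fixed basis monomial $(u\otimes y_1)(u\otimes y_2)(u\otimes y_3)$ drops to $3$, and when all three coincide it drops to $1$; at the same time the multiplicity structure of the matching terms in $d\mu$ changes. You would have to prove the coincidence of coefficients separately in each of these degenerate configurations, and the symmetry argument that ``each of the six contributions adds the same quantity'' simply does not apply verbatim there. This is not a cosmetic issue: a nontrivial fraction of the monomials in $\mathcal{U}$ do have repeated factors (for instance whenever $|y_1|=|y_2|$ and $y_1=y_2$ is forced by dimension count, or when $3|y_1|=2n$), so a proof that silently assumes all $y_i$ distinct is incomplete.

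The filtration $\mathcal{F}_i$, the direct-sum decomposition $\mathcal{U}=\bigoplus_i\mathcal{V}_i$ of Lemma~\ref{U_i}, and the derivations $\partial_i$ are the paper's device for organizing exactly this multiplicity bookkeeping: Lemma~\ref{deri} records that $\partial_i$ produces coefficient $1$, $2$, or $3$ depending on how many factors equal $u\otimes\theta_i$, and the four-way case split $\theta_i<x_1<x_2$, $\theta_i=x_1<x_2$, $\theta_i<x_1=x_2$, $\theta_i=x_1=x_2$ carried through Lemma~\ref{hat-a}, Proposition~\ref{a}, and the proof of Proposition~\ref{mu} is precisely the degenerate-case analysis your sketch omits. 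Your conclusion is correct and your strategy is sound--the same calibration balances both sides in every degenerate subcase--but you need to actually carry out the count of ordered triples (generically $6$, but $3$ or $1$ in the repeated-factor cases) and verify the match case by case, either directly or by adopting a filtration device like the paper's that handles the repetitions systematically.
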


		We prove Lemma \ref{zeta lemma} by assuming Proposition \ref{mu}.

		\begin{proof}
			[Proof of Lemma \ref{zeta lemma}]
			Let $\zeta=\frac{1}{4}(\alpha+\mu)$. Then by Lemma \ref{alpha xi} and Proposition \ref{mu}, $\zeta$ is a decomposable element of $\Lambda U$ and satisfies \eqref{alpha}.
		\end{proof}

		We devote the rest of this paper to prove Proposition \ref{mu}. Since both $\xi$ and $d\mu$ belong to $\mathcal{U}$, we need to understand the structure of the vector space $\mathcal{U}$. We introduce a total order on $\widehat{\B}$. First, we equip $\B_i$ with any total order for each $0<i\le n$. Next, for each $n<i<2n$ we equip $\B_i$ with a total order such that $x<y$ if and only if $\hat{x}<\hat{y}$. Finally, we extend to a total order on $\widehat{\B}$ such that $|x|<|y|$ implies $x<y$. We write
		\[
		\B_-=\{\theta_1>\cdots>\theta_m\}.
		\]
		Then $\B_{n+1}\sqcup\cdots\sqcup\B_{2n-2}=\{\hat{\theta}_1,...,\hat{\theta}_m\}$. Let $\mathcal{F}_0=\B_n$, and for $1\le i\le m$, let $\mathcal{F}_i=\mathcal{F}_{i-1}\sqcup\{\theta_i,\hat{\theta}_i\}$. Then we get a filtration
		\[
		\B_n=\mathcal{F}_0\subset \mathcal{F}_1\subset\cdots\subset \mathcal{F}_m=\widehat{\B}.
		\]
		Since $\theta_m<\cdots<\theta_1<x$ for all $x\in\B_n\sqcup\cdots\sqcup\B_{2n-2}$, the least element of $\mathcal{F}_i$ is $\theta_i$ for $1\le i\le m$.

		We introduce a filtration of $\mathcal{U}$ by using the above filtration of $\widehat{\B}$. For $0\le i\le m$, we define a linear map $f_i\colon\mathcal{U}\to\mathcal{U}$ by
		\[
		f_i((u\otimes x_1)(u\otimes x_2)(u\otimes x_3))=
		\begin{cases}
			(u\otimes x_1)(u\otimes x_2)(u\otimes x_3)&x_1,x_2,x_3\in \mathcal{F}_i,\\
			0&\text{otherwise}.
		\end{cases}
		\]
		Let $\mathcal{U}_i=\mathrm{Im}\,f_i$ for $0\le i\le m$.
		\begin{lemma}
			The vector space $\mathcal{U}_0$ is trivial.
		\end{lemma}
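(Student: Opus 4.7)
The plan is to observe that $\mathcal{U}_0$ vanishes for purely numerical reasons. A spanning element of $\mathcal{U}$ is a product $(u\otimes x_1)(u\otimes x_2)(u\otimes x_3)$ subject to the homological degree constraint $|x_1|+|x_2|+|x_3|=2n$, and by definition $f_0$ sends such a generator to itself when $x_1,x_2,x_3\in\mathcal{F}_0=\B_n$ and to $0$ otherwise. Thus $\mathcal{U}_0=\mathrm{Im}\,f_0$ is spanned by those products whose three factors all lie in $\B_n$.

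The second (and only other) step is to note that no such generator exists: if $x_1,x_2,x_3\in\B_n$, then $|x_1|+|x_2|+|x_3|=3n$, which equals $2n$ only when $n=0$, contradicting the standing assumption $n\ge 1$. Hence the spanning set is empty and $\mathcal{U}_0=0$.

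The only point worth verifying explicitly, and the reason this fact merits a small lemma rather than a one-line remark, is that the degree appearing in the definition of $\mathcal{U}$ refers to the homology degree $|x_i|$, not to the degree $2n-|x_i|$ of $u\otimes x_i$ as an element of $\Lambda W$. This is consistent with the constraint $|x_1|+|x_2|=|x_3|$ that appears implicitly when one checks that the element $\xi$ of \eqref{xi} lies in $\mathcal{U}$. There is no genuine obstacle beyond pinning down this convention.
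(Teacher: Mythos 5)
Your argument is exactly the paper's: a generator surviving $f_0$ would need $x_1,x_2,x_3\in\B_n$, forcing $|x_1|+|x_2|+|x_3|=3n$, which contradicts the defining constraint $|x_1|+|x_2|+|x_3|=2n$ since $n\ge 1$. The clarification about homological versus cohomological degree is sensible but not a point of divergence; the proofs coincide.
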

		\begin{proof}
			Let $(u\otimes x_1)(u\otimes x_2)(u\otimes x_3)$ be an element of $\mathcal{U}$. By definition, $f_0((u\otimes x_1)(u\otimes x_2)(u\otimes x_3))\ne 0$ if and only if $x_1,x_2,x_3\in\mathcal{F}_0=\B_n$, implying $|x_1|+|x_2|+|x_3|=3n$. This is impossible because $|x_1|+|x_2|+|x_3|=2n$, and so the statement is proved.
		\end{proof}
		It is easy to see that $f_m$ is the identity map on $\mathcal{U}$ since $\mathcal{F}_m=\widehat{\B}$. Then we get a filtration
		\[
		0=\mathcal{U}_0\subset\mathcal{U}_1\subset\cdots\subset\mathcal{U}_m=\mathcal{U}.
		\]
		For $1\le i\le m$, let 
		\[
		\mathcal{V}_i=\langle(u\otimes x_1)(u\otimes x_2)(u\otimes\theta_i)\in\mathcal{U}\mid x_1,x_2\in\mathcal{F}_i\rangle
		\]
		be a vector subspace of $\mathcal{U}$.

		\begin{lemma}
			\label{U_i}
			There is an equality
			\[
			\mathcal{U}=\mathcal{V}_1\oplus\mathcal{V}_2\oplus\cdots\oplus\mathcal{V}_m.
			\]
		\end{lemma}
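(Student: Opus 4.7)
The plan is to exhibit the natural basis of $\mathcal{U}$ consisting of ordered monomials $(u\otimes y_1)(u\otimes y_2)(u\otimes y_3)$ with $y_1\le y_2\le y_3$ in $\widehat{\B}$ and $|y_1|+|y_2|+|y_3|=2n$ (together with the usual parity constraint on repeated factors), and then to partition this basis according to which $\theta_i$ equals the smallest factor $y_1$. The lemma will follow once two facts are established: (i) for every such basis element one has $y_1\in\B_-$, so $y_1=\theta_i$ for a unique $i$; and (ii) the remaining factors $y_2,y_3$ then automatically belong to $\mathcal{F}_i$.

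For (i): $\widehat{\B}\setminus\B_-=\B_n\sqcup\cdots\sqcup\B_{2n-2}$ consists of elements of degree at least $n$, so having all three $y_j$ outside $\B_-$ would force the degree sum to be at least $3n$, which is impossible. Hence at least one factor lies in $\B_-$, and since the total order on $\widehat{\B}$ refines the degree order, any such factor is strictly smaller than any element outside $\B_-$; therefore $y_1\in\B_-$, say $y_1=\theta_i$.

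For (ii), granted $y_1=\theta_i$: if $y_j\in\B_n$ then $y_j\in\mathcal{F}_0\subseteq\mathcal{F}_i$, and if $y_j=\theta_l\in\B_-$ then $\theta_l\ge\theta_i$ forces $l\le i$, so $\theta_l\in\mathcal{F}_i$. The delicate case is $y_j=\hat{\theta}_k$: the degree identity forces the remaining factor to have degree $|\theta_k|-|\theta_i|$, which must be positive since that factor lies in $\widehat{\B}$; thus $|\theta_k|>|\theta_i|$, and the degree-respecting order on $\B_-$ yields $\theta_k>\theta_i$, i.e., $k<i$, hence $\hat{\theta}_k\in\mathcal{F}_i$.

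With (i) and (ii) in hand, every spanning generator $(u\otimes x_1)(u\otimes x_2)(u\otimes\theta_i)$ of $\mathcal{V}_i$ sorts to a basis monomial with $y_1=\theta_i$ (since $\theta_i$ is the least element of $\mathcal{F}_i$), and conversely every basis monomial with $y_1=\theta_i$ lies in $\mathcal{V}_i$ by (ii). The partition of the basis by the unique index $i$ with $y_1=\theta_i$ is therefore a disjoint union of the generating sets of the $\mathcal{V}_i$, which directly yields $\mathcal{U}=\mathcal{V}_1\oplus\cdots\oplus\mathcal{V}_m$. The main obstacle is the $\hat{\theta}_k$ subcase in step (ii); it is precisely what forces the filtration to add $\theta_i$ and $\hat{\theta}_i$ in pairs, and the order on $\B_{>n}$ to be chosen as the duality-induced order from $\B_-$, so that the degree inequality $|\theta_k|>|\theta_i|$ can be converted into the index inequality $k<i$.
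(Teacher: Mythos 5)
Your proof is correct and follows essentially the same strategy as the paper's: both use the degree constraint $|x_1|+|x_2|+|x_3|=2n$ together with the degree\nobreakdash-respecting total order on $\widehat{\B}$ to force the smallest factor of each basis monomial of $\mathcal{U}$ to be some $\theta_i$ and the remaining factors into $\mathcal{F}_i$. The paper packages this as an induction along the filtration $\mathcal{U}_0\subset\cdots\subset\mathcal{U}_m$ (so it only has to exclude the single case $x_3=\hat\theta_i$), whereas you partition the monomial basis by the least factor in one pass, which makes the $y_j=\hat\theta_k$ subcase slightly longer but rests on the identical degree argument.
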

		
		\begin{proof}
			It is sufficient to show $\mathcal{U}_i=\mathcal{U}_{i-1}\oplus\mathcal{V}_i$ for $1\le i\le m$. By definition, the vector space $\mathcal{U}_i$ is the direct sum of $\mathcal{U}_{i-1}$ and a vector subspace of $\mathcal{U}$ spanned by $(u\otimes x_1)(u\otimes x_2)(u\otimes x_3)$ such that $x_1,x_2\in\mathcal{F}_i$ and $x_3=\theta_i$ or $\hat{\theta}_i$. If $x_3=\hat{\theta}_i$, then
			\[
			|x_1|+|x_2|=2n-|x_3|=|\theta_i|
			\]
			implying $x_1,x_2<\theta_i$. Then we get $x_1,x_2\not\in \mathcal{F}_i$, which is impossible. Clearly, the $x_3=\theta_i$ case is possible, completing the proof.
		\end{proof}

		

		\subsection{The derivation $\partial_i$}
		
		For $1\le i\le m$, let $p_i\colon\mathcal{U}\to\mathcal{V}_i$ denote the projection. Then by Lemma \ref{U_i}, in order to prove Proposition \ref{mu}, it is sufficient to show
		\begin{equation}
			\label{xi mu i}
			p_i(\xi-d\mu)=0
		\end{equation}
		for $1\le i\le m$. To this end, we introduce the derivation $\partial_i$. Let
		\[
		\widehat{U}=\langle u\otimes x\mid x\in\widehat{\B}\rangle.
		\]
		Then $\mathcal{U}$ is a vector subspace of $\Lambda\widehat{U}$. For $1\le i\le m$, we define a derivation $\partial_i\colon\Lambda\widehat{U}\to\Lambda\widehat{U}$ by
		\[
		\partial_i(u\otimes x)=
		\begin{cases}
			1&x=\theta_i,\\
			0&x\ne\theta_i
		\end{cases}
		\]
		for $x\in\widehat{\B}$ and the Leibuniz rule
		\begin{equation}
			\notag
			\partial_i(ab)=\partial_i(a)b+(-1)^{|a|}a\partial_i(b)
		\end{equation}
		for $a,b\in\Lambda\widehat{U}$. The following is immediate.
		\begin{lemma}
			\label{deri}
			For a non-trivial element $(u\otimes x_1)(u\otimes x_2)(u\otimes \theta_i)\in\mathcal{V}_i$, we have
			\begin{equation}
				\notag
				\partial_i((u\otimes x_1)(u\otimes x_2)(u\otimes\theta_i))=
				\begin{cases}
					(-1)^{|x_1|+|x_2|}(u\otimes x_1)(u\otimes x_2)&x_1,x_2\ne\theta_i,\\
					2(u\otimes x_1)(u\otimes\theta_i)&x_1\ne\theta_i,\,x_2=\theta_i,\\
					3(u\otimes\theta_i)^2&x_1,x_2=\theta_i.
				\end{cases}
			\end{equation}
		\end{lemma}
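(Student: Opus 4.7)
The plan is to prove Lemma \ref{deri} by a direct application of the Leibniz rule, with the only subtlety being that the non-triviality hypothesis pins down certain parities and therefore the signs.

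First, I would record three elementary facts. (i) By the definition of $\mathcal{U}$, we have $|x_1|+|x_2|+|\theta_i|=2n$, which is even. (ii) Since $\Lambda\widehat{U}$ is graded commutative, an element $(u\otimes y)$ that appears as a square or cube must have even degree, i.e.\ $2n-|y|$ even. (iii) The derivation rule $\partial_i(u\otimes x)=\delta_{x,\theta_i}$ together with the Leibniz rule $\partial_i(ab)=\partial_i(a)b+(-1)^{|a|}a\,\partial_i(b)$ entirely determines $\partial_i$.

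Now I would split into the three cases of the lemma and compute. In the case $x_1,x_2\ne\theta_i$, the first two Leibniz terms vanish and only the term coming from differentiating $(u\otimes\theta_i)$ survives; the accumulated sign is $(-1)^{|u\otimes x_1|+|u\otimes x_2|}=(-1)^{|x_1|+|x_2|}$ (using $(-1)^{4n}=1$), giving the first line. In the case $x_1\ne\theta_i$, $x_2=\theta_i$, non-triviality of $(u\otimes x_1)(u\otimes\theta_i)^2$ forces $2n-|\theta_i|$ even by fact (ii), hence $|\theta_i|$ is even, and then fact (i) forces $|x_1|$ to be even; applying Leibniz to the $(u\otimes\theta_i)^2$ factor produces two equal terms with sign $+1$, yielding the coefficient $2$ in front of $(u\otimes x_1)(u\otimes\theta_i)$. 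In the case $x_1=x_2=\theta_i$, non-triviality of $(u\otimes\theta_i)^3$ again forces $|\theta_i|$ to be even, and the Leibniz rule applied twice gives three equal terms $(u\otimes\theta_i)^2$, yielding the coefficient $3$.

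There is no real obstacle here: once the parities are fixed by the non-triviality assumption, every sign $(-1)^{|u\otimes\theta_i|}$ equals $+1$ and every sign $(-1)^{|u\otimes x_j|}$ is just $(-1)^{|x_j|}$. The entire proof is a bookkeeping exercise in the Leibniz rule, and the only thing one has to watch is that the degree parities come from the hypothesis "non-trivial element," not from the degrees $|x_1|,|x_2|,|\theta_i|$ separately.
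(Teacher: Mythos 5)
Your proposal is correct and follows essentially the same route as the paper: apply the Leibniz rule directly, using the degree constraint $|x_1|+|x_2|+|\theta_i|=2n$ and the observation that non-triviality (graded-commutativity of $\Lambda\widehat U$) forces $|u\otimes\theta_i|$ to be even whenever it appears squared or cubed, which in turn controls all the signs.
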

		\begin{proof}
			Since $(u\otimes x_1)(u\otimes x_2)(u\otimes\theta_i)\in\mathcal{V}_i$, we have $
			|x_1|+|x_2|+|\theta_i|=2n$.

			\noindent(1) For $x_1,x_2\ne\theta_i$,
			\begin{align*}
				\partial_i((u\otimes x_1)(u\otimes x_2)(u\otimes\theta_i))&=(-1)^{|u\otimes x_1|+|u\otimes x_2|}(u\otimes x_1)(u\otimes x_2)\partial_i(u\otimes\theta_i)\\
				&=(-1)^{|x_1|+|x_2|}(u\otimes x_1)(u\otimes x_2).
			\end{align*}

			\noindent(2) For $x\ne\theta_i,x_2=\theta_i$, $|u\otimes\theta_i|$ is even since $(u\otimes x_1)(u\otimes x_2)(u\otimes\theta_i)\ne0$. This also implies that $|x_1|$ is even since $|x_1|=2n-|x_2|-|\theta_i|=2n-2|\theta_i|$. Then we have
			\[
			\partial_i((u\otimes x_1)(u\otimes x_2)(u\otimes\theta_i))=(-1)^{|u\otimes x_1|}(u\otimes x_1)\partial_i\left((u\otimes\theta_i)^2\right)=2(u\otimes x_1)(u\otimes\theta_i).
			\]

			\noindent(3) For $x_1,x_2=\theta_i$, similarly to the above, we have $|\theta_i|$ even. Then we get
			\[
			\partial_i((u\otimes x_1)(u\otimes x_2)(u\otimes\theta_i))=3(u\otimes\theta_i)^2.
			\]
			Thus the proof is finished.
		\end{proof}
		The derivation $\partial_i$ has the following pleasant property.
		\begin{lemma}
			\label{derivation}
			For $1\le i\le m$, the derivation $\partial_i$ is injective on $\mathcal{V}_i\subset\Lambda\widehat{U}$.
		\end{lemma}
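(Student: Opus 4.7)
The plan is to exploit the fact that $\partial_i$ is essentially differentiation with respect to the single variable $u\otimes\theta_i$, combined with the grading on $\Lambda\widehat{U}$ by the number of $(u\otimes\theta_i)$ factors appearing in each monomial. Every generator of $\mathcal{V}_i$ carries at least one explicit factor of $u\otimes\theta_i$, so if I decompose any $\omega\in\mathcal{V}_i$ as $\omega=\omega^{(1)}+\omega^{(2)}+\omega^{(3)}$, where $\omega^{(k)}$ collects the terms containing exactly $k$ factors of $u\otimes\theta_i$, then $\omega=0$ if and only if each $\omega^{(k)}=0$. Note that $\omega^{(2)}$ and $\omega^{(3)}$ are automatically zero when $|\theta_i|$ is odd, since then $|u\otimes\theta_i|=2n-|\theta_i|$ is odd and $(u\otimes\theta_i)^2=0$ in the graded-commutative algebra $\Lambda\widehat{U}$.

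Since $\partial_i$ is a derivation that kills every $u\otimes x$ with $x\ne\theta_i$ and sends $u\otimes\theta_i$ to $1$, it lowers the $(u\otimes\theta_i)$-count by exactly one. Hence the equation $\partial_i(\omega)=0$ splits into three independent equations $\partial_i(\omega^{(k)})=0$ for $k=1,2,3$. Lemma \ref{deri} gives the action of $\partial_i$ on each type of monomial explicitly as multiplication by a nonzero rational scalar---namely $(-1)^{|x_1|+|x_2|}$, $2$, or $3$---times the monomial with one $(u\otimes\theta_i)$ factor stripped off. I plan to argue injectivity separately on each component: the images $(u\otimes x_1)(u\otimes x_2)$, $(u\otimes x_1)(u\otimes\theta_i)$, and $(u\otimes\theta_i)^2$ that arise in cases $k=1,2,3$ respectively are linearly independent in $\Lambda\widehat{U}$ when the source monomials are chosen from a standard monomial basis of $\mathcal{V}_i$, so the coefficients of $\omega^{(k)}$ must all vanish.

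The main bookkeeping hurdle is to fix an unambiguous monomial basis for each of the three components of $\mathcal{V}_i$, tracking graded-commutativity signs and identifying which monomials are nonzero in view of the parity of $|u\otimes\theta_i|$. Once this is arranged, injectivity on each component reduces to the observation that stripping one $(u\otimes\theta_i)$ factor from a basis monomial and multiplying by the nonzero scalar supplied by Lemma \ref{deri} yields a distinct basis monomial of one lower weight. No further input about the geometry of $X$ or about the filtration $\mathcal{F}_i$ is needed; the statement is a purely formal consequence of the derivation property of $\partial_i$ on the graded polynomial algebra $\Lambda\widehat{U}$.
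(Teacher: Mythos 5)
Your proof is correct and takes essentially the same route as the paper: both rely on Lemma \ref{deri} to compute $\partial_i$ on the monomial basis of $\mathcal{V}_i$ and then observe that the resulting images are linearly independent. Your explicit grading by the number of $(u\otimes\theta_i)$-factors is simply a tidy way of organizing the linear-independence observation the paper states directly, not a different argument.
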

		
		\begin{proof}
			Note that $\mathfrak{B}_i=\{(u\otimes x_1)(u\otimes x_2)(u\otimes\theta_i)\in\mathcal{U}\mid x_1,x_2\in\mathcal{F}_i\}$ is a basis of $\mathcal{V}_i$. Then by Lemma \ref{deri}, $\partial_ia$ for $a\in\mathfrak{B}_i$ are linearly independent, proving the statement.
		\end{proof}

		In order to show \eqref{xi mu i}, it is sufficient to prove
		\begin{equation}
			\label{xi mu partial}
			\partial_i(p_i(\xi-d\mu))=0
		\end{equation}
		for $1\le i\le m$ by Lemma \ref{derivation}. So we describe $\partial_ip_i(\xi)$ and $\partial_ip_i(d\mu)$ respectively. To this end, we need the following properties of $\epsilon(x,y,z)$.

		\begin{lemma}
			\label{epsilon(x)}
			For $x\in\B_-$, we have
			\[
			\epsilon(x)=1\quad\text{and}\quad\epsilon(\hat{x})=(-1)^{|x||\hat{x}|}.
			\]
		\end{lemma}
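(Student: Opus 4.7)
The plan is to unwind the two identities directly from the construction of the basis $\mathcal{B}$ in Section 3 and the graded-commutativity relation in Lemma \ref{comm}; there is no genuine obstacle here, as both assertions reduce to the defining property of the Poincaré-dual basis.

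For the first identity, I would recall that for $x \in \mathcal{B}_i$ with $2 \le i \le n-1$, the element $\mathrm{PD}(x) \in H_{2n-i}(X)$ was \emph{defined} by the normalization $\epsilon(y,\mathrm{PD}(x),w) = \delta_{x,y}$ for $y \in \mathcal{B}_i$, and then $\mathcal{B}_{2n-i}$ was set equal to $\{\mathrm{PD}(x) \mid x \in \mathcal{B}_i\}$. Since $\hat{x}$ is by definition the unique element of $\mathcal{B}$ with $\epsilon(x,\hat{x},w) \ne 0$ and $|x| + |\hat{x}| = 2n$ (forced by the degree condition \eqref{epsilon degree}), the uniqueness forces $\hat{x} = \mathrm{PD}(x)$, and so $\epsilon(x) = \epsilon(x,\hat{x},w) = 1$.

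For the second identity, I would observe that by the already-noted equality $\hat{\hat{x}} = x$ we have $\epsilon(\hat{x}) = \epsilon(\hat{x}, x, w)$, and then invoke the graded symmetry established in Lemma \ref{comm}:
\[
\epsilon(\hat{x}, x, w) = (-1)^{|x||\hat{x}|}\epsilon(x,\hat{x},w) = (-1)^{|x||\hat{x}|},
\]
using the first identity in the last step. This gives the claim. Note that $x \in \mathcal{B}_-$ means $0 < |x| < n$, so $|\hat{x}| > n$ and we are genuinely in the Poincaré-dual range; the middle-degree subtleties of $\mathcal{B}_n$ (with its symmetric or anti-symmetric intersection form) are never needed.
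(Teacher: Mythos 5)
Your proof is correct and follows essentially the same route as the paper's: the first identity is just the normalization built into the Poincar\'e-dual basis (which the paper dismisses as ``follows immediately from the definition'' and you simply spell out), and the second is the same short computation via $\hat{\hat{x}}=x$ and the graded commutativity of Lemma \ref{comm}.
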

		
		\begin{proof}
			The first equality follows immediately from the definition. Since $\hat{\hat{x}}=x$, we have
			\[
			\epsilon(\hat{x})=\epsilon(\hat{x},x,w)=(-1)^{|\hat{x}||x|}\epsilon(x,\hat{x},w)=(-1)^{|\hat{x}||x|}\epsilon(x)
			\]
			by Lemma \ref{comm}. Then we get the second equality.
		\end{proof}
		\begin{lemma}
			\label{degree}
			For $x_1,x_2,x_3\in\widehat{\B}$, we have the following:
			\begin{enumerate}
				\item $\epsilon(x_1,x_2,\hat{x}_3)(u\otimes x_1)(u\otimes x_2)(u\otimes x_3)=\epsilon(x_2,x_1,\hat{x}_3)(u\otimes x_2)(u\otimes x_1)(u\otimes x_3)$.
				\item For $1\le i\le m$, $p_i(\epsilon(x_1,x_2,\theta_i)(u\otimes x_1)(u\otimes x_2)(u\otimes\hat{\theta}_i))=0$.
				\item $\epsilon(\hat{x}_3)\epsilon(x_1,x_2,\hat{x}_3)=\epsilon(x_1)\epsilon(x_2,x_3,\hat{x}_1).$
				\item If at least two of $x_1,x_2,x_3$ are of the same degree and
				$$\epsilon(x_1,x_2,\hat{x}_3)(u\otimes x_1)(u\otimes x_2)(u\otimes x_3)\ne0,$$
				then $|x_1|,|x_2|,|x_3|$ are even.
			\end{enumerate}
		\end{lemma}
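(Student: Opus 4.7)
My plan is to prove the four items separately: (1) and (3) are short formal manipulations, (2) is a structural decomposition argument, and (4) is a parity check that I expect to be the main obstacle.

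For (1), I would chase signs. Lemma \ref{comm} gives $\epsilon(x_1,x_2,\hat x_3)=(-1)^{|x_1||x_2|}\epsilon(x_2,x_1,\hat x_3)$, while graded commutativity in $\Lambda W$ gives $(u\otimes x_1)(u\otimes x_2)=(-1)^{(2n-|x_1|)(2n-|x_2|)}(u\otimes x_2)(u\otimes x_1)$. Since $2n$ is even, the latter exponent is congruent to $|x_1||x_2|$ modulo $2$, and the two signs cancel. For (3), I would apply Lemma \ref{asso} with the last argument $x_4=w$; by the construction of the basis $\B$, $\epsilon(y,x_3,w)$ vanishes for every $y\in\B$ except $y=\hat x_3$ (where it equals $\epsilon(\hat x_3)$), and likewise $\epsilon(x_1,z,w)$ is supported only at $z=\hat x_1$ (with value $\epsilon(x_1)$). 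Each sum in the associativity identity therefore collapses to a single term, giving the claimed equality.

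For (2), I would exploit the direct-sum decomposition $\mathcal{U}=\bigoplus_j\mathcal V_j$ from Lemma \ref{U_i}. If $\epsilon(x_1,x_2,\theta_i)\ne 0$, then $|x_1|+|x_2|=|\theta_i|<n$, so $|x_1|,|x_2|<|\theta_i|$; since the total order on $\widehat{\B}$ respects degree, this forces $x_1,x_2$ strictly below $\theta_i$, so we may write $x_1=\theta_a$ and $x_2=\theta_b$ with $a,b>i$. All three factors of the monomial $(u\otimes x_1)(u\otimes x_2)(u\otimes\hat\theta_i)$ then lie in $\mathcal F_{\max(a,b)}$, and $\theta_{\max(a,b)}$ appears among them, so the monomial sits in $\mathcal V_{\max(a,b)}$. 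Since $\max(a,b)>i$, the projection $p_i$ annihilates it.

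For (4), I would argue through the vanishing of odd-degree squares in $\Lambda W$. If two of $x_1,x_2,x_3$ coincide as elements, say $x_i=x_j$, then the product contains $(u\otimes x_i)^2$, which is zero unless $|u\otimes x_i|=2n-|x_i|$ is even; combined with $|x_1|+|x_2|+|x_3|=2n$, this forces the remaining degree to be even as well. The main obstacle is the borderline subcase where the two factors of coincident degree are distinct but share an odd degree: there the product itself is generally nonzero, so the required vanishing must come from $\epsilon$. I would aim to leverage the antisymmetry $\epsilon(x_1,x_2,\hat x_3)=-\epsilon(x_2,x_1,\hat x_3)$ from Lemma \ref{comm} in that odd-degree case, together with the specific structure of $\hat x_3$ dictated by the Poincar\'e pairing, to force $\epsilon(x_1,x_2,\hat x_3)=0$, and this is the step where I expect to invest the most care.
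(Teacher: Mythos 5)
Your treatments of (1), (2) and (3) match the paper's proof closely. For (1), the paper simply says the equality follows at once from Lemma~\ref{comm}; you spell out the sign cancellation, which is the intended computation. For (3), both you and the paper instantiate Lemma~\ref{asso} with $x_4=w$ and observe that each side collapses to a single term by duality. For (2), the paper argues directly that $\epsilon(x_1,x_2,\theta_i)\ne0$ forces $|x_1|,|x_2|<|\theta_i|$, hence $x_1,x_2\notin\mathcal{F}_i$, hence the monomial lies outside $\mathcal{V}_i$; you take the extra step of identifying the summand $\mathcal{V}_{\max(a,b)}$ that actually contains it. Both routes are fine and essentially the same.

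For (4), your first argument --- when two of the $x_j$ coincide as elements, the product contains a square of an odd-degree generator and must vanish --- is exactly what the paper does: the paper's proof writes $(u\otimes x_1)(u\otimes x_2)(u\otimes x_3)=(u\otimes x_1)^2(u\otimes x_3)$, tacitly taking $x_1=x_2$. But I should flag the ``borderline subcase'' you identified, because your proposed repair will not go through. If $x_1\ne x_2$ are distinct basis elements with $|x_1|=|x_2|=i$ odd and $x_1^*\smile x_2^*\ne0$ (which can certainly happen, e.g.\ when $\dim H^i(X)\ge 2$ and the cup form $H^i\otimes H^i\to H^{2i}$ is nontrivial), then $\epsilon(x_1,x_2,\hat x_3)\ne0$ for the appropriate $x_3$, and the monomial $(u\otimes x_1)(u\otimes x_2)(u\otimes x_3)$ is also nonzero: distinct odd-degree generators do not square and so do not annihilate each other. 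Antisymmetry of $\epsilon$ in the odd case gives $\epsilon(x_1,x_2,\hat x_3)=-\epsilon(x_2,x_1,\hat x_3)$, which only kills the diagonal $x_1=x_2$. So the statement with the hypothesis read literally as ``equal degree'' is in fact false; what the paper's proof establishes, and what is used everywhere this lemma is later invoked (check the case splits $\theta_i<x_1=x_2$, $\theta_i=x_1<x_2$, $\theta_i=x_1=x_2$ in Lemma~\ref{hat-a}, Proposition~\ref{a}, and the proof of Proposition~\ref{mu}), is the weaker statement with two of $x_1,x_2,x_3$ equal \emph{as elements}. Once you read the hypothesis that way, your square-is-zero argument already covers every case and there is no remaining obstacle. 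Do not spend effort trying to force $\epsilon(x_1,x_2,\hat x_3)=0$ in the distinct-same-degree case --- it is neither true nor needed.
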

		\begin{proof}
			(1) The equality follows at once from Lemma \ref{comm}.

			\noindent(2) By \eqref{epsilon degree}, $|x_1|+|x_2|=|\theta_i|$ whenever $\epsilon(x_1,x_2,\theta_i)\ne0$, which implies that $x_1,x_2\notin\mathcal{F}_i$. Thus $p_i(\epsilon(x_1,x_2,\theta_i)(u\otimes x_1)(u\otimes x_2)(u\otimes\hat{\theta}_i))=0$ by definition.

			\noindent(3) By Lemma \ref{asso}, we have
			\begin{align*}
				\epsilon(\hat{x}_3)\epsilon(x_1,x_2,\hat{x}_3)&=\epsilon(\hat{x}_3,x_3,w)\epsilon(x_1,x_2,\hat{x}_3)\\
				&=\epsilon(x_1,\hat{x}_1,w)\epsilon(x_2,x_3,\hat{x}_1)\\
				&=\epsilon(x_1)\epsilon(x_2,x_3,\hat{x}_1)
			\end{align*}
			where $\hat{\hat{x}}_3=x_3$.

			\noindent(4) We only prove the $|x_1|=|x_2|$ case because other cases are proved quite similarly. Since $\epsilon(x_1,x_2,\hat{x}_3)\ne0$, we have $|x_1|+|x_2|=|\hat{x}_3|=2n-|x_3|$ by \eqref{epsilon degree}. Since $(u\otimes x_1)(u\otimes x_2)(u\otimes x_3)=(u\otimes x_1)^2(u\otimes x_3)\ne 0$, $|u\otimes x_1|=2n-|x_1|$ is even. Then we get that $|x_1|,|x_2|,|x_3|$ are even too.
		\end{proof}
		We prove a lemma which we are going to use.
		\begin{lemma}
			\label{x_1}
			For $x_2\in\mathcal{F}_{i-1}$, we have
			\begin{align*}
				&\partial_ip_i\left(\sum_{x_1,x_1'\in \widehat{\B}}\epsilon(x_1,x_1',\hat{x}_2)(u\otimes x_1)(u\otimes x_1')(u\otimes x_2)\right)\\
				&=2\sum_{x_1\in \mathcal{F}_i}\epsilon(\theta_i,x_1,\hat{x}_2)(u\otimes x_1)(u\otimes x_2).
			\end{align*}
		\end{lemma}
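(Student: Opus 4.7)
The plan is to compute $p_i$ of the inner sum first, then apply $\partial_i$. Since $x_2 \in \mathcal{F}_{i-1}$, we have $x_2 \in \mathcal{F}_i$ and $x_2 \ne \theta_i$. Recalling that $\mathcal{V}_i$ consists of triples in which all three factors lie in $\mathcal{F}_i$ and at least one factor is $u\otimes\theta_i$, a pair $(x_1, x_1')$ contributes to $p_i$ of the sum iff $x_1, x_1' \in \mathcal{F}_i$ and one of them equals $\theta_i$. So I would restrict to those pairs and partition them into three disjoint cases: (A) $x_1 = \theta_i$, $x_1' \in \mathcal{F}_i \setminus \{\theta_i\}$; (B) $x_1 \in \mathcal{F}_i \setminus \{\theta_i\}$, $x_1' = \theta_i$; (C) $x_1 = x_1' = \theta_i$.

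Next I would show that (B) equals (A) term-by-term. Using Lemma \ref{comm} to obtain $\epsilon(x_1', \theta_i, \hat{x}_2) = (-1)^{|x_1'||\theta_i|}\epsilon(\theta_i, x_1', \hat{x}_2)$ and graded commutativity $(u\otimes x_1')(u\otimes\theta_i) = (-1)^{(2n-|x_1'|)(2n-|\theta_i|)}(u\otimes\theta_i)(u\otimes x_1')$, the combined sign is $(-1)^{|x_1'||\theta_i| + (2n-|x_1'|)(2n-|\theta_i|)}$. Expanding, this exponent equals $2|x_1'||\theta_i| + 4n^2 - 2n(|x_1'|+|\theta_i|)$, which is even, so the sign is $+1$. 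This gives
\[
p_i(\text{sum}) = 2\!\!\!\sum_{x_1' \in \mathcal{F}_i \setminus \{\theta_i\}}\!\!\!\epsilon(\theta_i, x_1', \hat{x}_2)(u\otimes\theta_i)(u\otimes x_1')(u\otimes x_2) + \epsilon(\theta_i, \theta_i, \hat{x}_2)(u\otimes\theta_i)^2(u\otimes x_2).
\]

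Finally I would apply $\partial_i$ via the Leibniz rule. In the (A)$+$(B) piece, neither $x_1'$ nor $x_2$ equals $\theta_i$, so only the $u\otimes\theta_i$ factor contributes, yielding $2\epsilon(\theta_i,x_1',\hat{x}_2)(u\otimes x_1')(u\otimes x_2)$ per term. For (C), Lemma \ref{degree}(4) forces $|\theta_i|$ to be even whenever the term is nonzero, hence $\partial_i((u\otimes\theta_i)^2) = 2(u\otimes\theta_i)$, producing $2\epsilon(\theta_i,\theta_i,\hat{x}_2)(u\otimes\theta_i)(u\otimes x_2)$—exactly the $x_1 = \theta_i$ summand needed to complete the sum over $x_1 \in \mathcal{F}_i$. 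Assembling these yields the claimed identity. The only nontrivial step is the sign verification combining (A) and (B); once that parity computation is done, the rest is bookkeeping with the Leibniz rule.
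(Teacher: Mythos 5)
Your argument is correct and follows essentially the same route as the paper's: identify which monomials survive $p_i$ (all factors in $\mathcal{F}_i$ with $\theta_i$ among $x_1,x_1'$), merge the two "one factor is $\theta_i$" cases into a single sum with a factor of $2$ via the sign identity (the paper packages this as Lemma \ref{degree}(1), you verify it directly), and then apply $\partial_i$ via the Leibniz rule, with the $(u\otimes\theta_i)^2$ term supplying the $x_1=\theta_i$ summand. The only difference is cosmetic: you spell out the parity check and the case partition a bit more explicitly than the paper does.
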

		\begin{proof}
			By definition, $p_i((u\otimes x_1)(u\otimes x_1')(u\otimes x_2))\ne 0$ if and only if $x_1,x_1'\in\mathcal{F}_i$ and at least one of $x_1,x_1'$ is $\theta_i$. Then we have
			\begin{align*}
				&p_i\left(\sum_{x_1,x_1'\in\widehat{\B}}\epsilon(x_1,x_1',\hat{x}_2)(u\otimes x_1)(u\otimes x_1')(u\otimes x_2)\right)\\
				&=\sum_{x_1,x_1'\in\mathcal{F}_i}\epsilon(x_1,x_1',\hat{x}_2)(u\otimes x_1)(u\otimes x_1')(u\otimes x_2)\\
				&=\epsilon(\theta_i,\theta_i,\hat{x}_2)(u\otimes\theta_i)^2(u\otimes x_2)\\
				&\quad+\sum_{x_1\in \mathcal{F}_{i-1}\sqcup\{\hat{\theta}_i\}}\epsilon(x_1,\theta_i,\hat{x}_2)(u\otimes x_1)(u\otimes\theta_i)(u\otimes x_2)\\
				&\quad+\sum_{x_1'\in \mathcal{F}_{i-1}\sqcup\{\hat{\theta}_i\}}\epsilon(\theta_i,x_1',\hat{x}_2)(u\otimes\theta_i)(u\otimes x_1')(u\otimes x_2).
			\end{align*}
			where the last equality holds because $\mathcal{F}_i=\mathcal{F}_{i-1}\sqcup\{\theta_i,\hat{\theta}_i\}$. By Lemma \ref{degree} (1),
			\begin{align*}
				&\sum_{x_1\in \mathcal{F}_{i-1}\sqcup\{\hat{\theta}_i\}}\epsilon(x_1,\theta_i,\hat{x}_2)(u\otimes x_1)(u\otimes\theta_i)(u\otimes x_2)\\
				&\quad+\sum_{x_1'\in \mathcal{F}_{i-1}\sqcup\{\hat{\theta}_i\}}\epsilon(\theta_i,x_1',\hat{x}_2)(u\otimes\theta_i)(u\otimes x_1')(u\otimes x_2)\\
				&=\sum_{x_1\in \mathcal{F}_{i-1}\sqcup\{\hat{\theta}_i\}}\epsilon(\theta_i,x_1,\hat{x}_2)(u\otimes\theta_i)(u\otimes x_1)(u\otimes x_2)\\
				&\quad+\sum_{x_1'\in \mathcal{F}_{i-1}\sqcup\{\hat{\theta}_i\}}\epsilon(\theta_i,x_1',\hat{x}_2)(u\otimes\theta_i)(u\otimes x_1')(u\otimes x_2)\\
				&=2\sum_{x_1\in \mathcal{F}_{i-1}\sqcup\{\hat{\theta}_i\}}\epsilon(\theta_i,x_1,\hat{x}_2)(u\otimes \theta_i)(u\otimes x_1)(u\otimes x_2).
			\end{align*}
			Then since $x_2\in\mathcal{F}_{i-1}$, we have
			\begin{align*}
				&\partial_ip_i\left(\sum_{x_1,x_1'\in\widehat{\B}}\epsilon(x_1,x_1',\hat{x}_2)(u\otimes x_1)(u\otimes x_1')(u\otimes x_2)\right)\\
				&=\partial_i\left(\epsilon(\theta_i,\theta_i,\hat{x}_2)(u\otimes\theta_i)^2(u\otimes x_2)\right)\\
				&\quad+2\partial_i\left(\sum_{x_1\in \mathcal{F}_{i-1}\sqcup\{\hat{\theta}_i\}}\epsilon(\theta_i,x_1,\hat{x}_2)(u\otimes \theta_i)(u\otimes x_1)(u\otimes x_2)\right)\\
				&=2\epsilon(\theta_i,\theta_i,\hat{x}_2)(u\otimes\theta_i)(u\otimes x_2)\\
				&\quad+2\sum_{x_1\in \mathcal{F}_{i-1}\sqcup\{\hat{\theta}_i\}}\epsilon(\theta_i,x_1,\hat{x}_2)(u\otimes \theta_i)(u\otimes x_1)(u\otimes x_2)\\
				&=2\sum_{x_1\in\mathcal{F}_i}\epsilon(\theta_i,x_1,\hat{x}_2)(u\otimes \theta_i)(u\otimes x_1)(u\otimes x_2)
			\end{align*}
			where the second equality holds by Lemma \ref{deri} and the last equality holds because $\mathcal{F}_i=\mathcal{F}_{i-1}\sqcup\{\theta_i,\hat{\theta}_i\}$. Thus the proof is finished.
		\end{proof}

		First, we describe $\partial_ip_i(\xi)$.

		\begin{lemma}
			\label{xi_i}
			For $1\le i\le m$, we have
			\[
			\partial_ip_i(\xi)=\epsilon(\hat{\theta}_i)\partial_ip_{i}(d\mu(\theta_i))+2\sum_{\substack{x_1\in \mathcal{F}_i\\x_2\in \mathcal{F}_{i-1}}}\epsilon(x_1,x_2,\hat{\theta}_i)(u\otimes x_1)(u\otimes x_2).
			\]
		\end{lemma}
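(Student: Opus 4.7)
The plan is to compute $\partial_i p_i(\xi)$ directly from the definition of $\xi$ in \eqref{xi} by splitting the triple sum into pieces according to where the last index lies in the filtration of $\widehat{\B}$, and then to match two of those pieces with the right-hand side.

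First, I would reparametrize $\xi$ by setting $x_3' = \hat{x}_3$, so that
\[
\xi = \sum_{x_1,x_2,x_3'\in\widehat{\B}}\epsilon(\hat{x}_3')\,\epsilon(x_1,x_2,\hat{x}_3')(u\otimes x_1)(u\otimes x_2)(u\otimes x_3'),
\]
and then split the outer sum according to whether $x_3' = \theta_i$, $x_3'\in\mathcal{F}_{i-1}$, $x_3'=\hat{\theta}_i$, or $x_3'\notin\mathcal{F}_i$. Since $p_i$ kills any product containing a factor outside $\mathcal{F}_i$, the last possibility contributes $0$.

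For the piece where $x_3'=\theta_i$, the factor $\epsilon(\hat{\theta}_i)$ comes out and the remaining sum is exactly $p_i$ applied to the first summand in the expression \eqref{dmu} for $d\mu(\theta_i)$; by Lemma~\ref{degree}(2) the second summand of $d\mu(\theta_i)$ is annihilated by $p_i$, so this piece equals $\epsilon(\hat{\theta}_i)\,\partial_i p_i(d\mu(\theta_i))$. For the piece where $x_3' = \hat{\theta}_i$, the coefficient is $\epsilon(\theta_i)\epsilon(x_1,x_2,\theta_i)$, and Lemma~\ref{degree}(2) again shows that $p_i$ kills each such triple product, giving $0$. The remaining piece, with $x_3'\in\mathcal{F}_{i-1}$, is handled term-by-term by Lemma~\ref{x_1}: for each fixed $x_3'\in\mathcal{F}_{i-1}$, applying $\partial_i p_i$ to the inner double sum yields
\[
2\sum_{x_1\in\mathcal{F}_i}\epsilon(\theta_i,x_1,\hat{x}_3')(u\otimes x_1)(u\otimes x_3').
\]
Multiplying by $\epsilon(\hat{x}_3')$ and using Lemma~\ref{degree}(3) together with $\epsilon(\theta_i)=1$ from Lemma~\ref{epsilon(x)} (valid since $\theta_i\in\B_-$) converts $\epsilon(\hat{x}_3')\epsilon(\theta_i,x_1,\hat{x}_3')$ into $\epsilon(x_1,x_3',\hat{\theta}_i)$, so this piece produces exactly $2\sum_{x_1\in\mathcal{F}_i,\,x_2\in\mathcal{F}_{i-1}}\epsilon(x_1,x_2,\hat{\theta}_i)(u\otimes x_1)(u\otimes x_2)$, which is the second term on the right-hand side of the lemma.

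The main obstacle, aside from the bookkeeping of the case split, is the middle case: verifying that after applying $\partial_i p_i$ to the $x_3'\in\mathcal{F}_{i-1}$ sum, the coefficient $\epsilon(\hat{x}_3')\epsilon(\theta_i,x_1,\hat{x}_3')$ really collapses to $\epsilon(x_1,x_3',\hat{\theta}_i)$. This is purely the associativity identity of Lemma~\ref{asso} packaged into Lemma~\ref{degree}(3), combined with $\epsilon(\theta_i)=1$; once that rewriting is made, summation over $x_3'\in\mathcal{F}_{i-1}$ and $x_1\in\mathcal{F}_i$ produces the stated expression, completing the proof.
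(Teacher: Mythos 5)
Your proposal is correct and follows essentially the same route as the paper's proof: reparametrizing the third index, splitting according to the filtration piece $x_3'\in\mathcal{F}_{i-1}$, $=\theta_i$, $=\hat{\theta}_i$, or outside $\mathcal{F}_i$, applying Lemma~\ref{degree}(2) to kill the $\hat{\theta}_i$ piece and to match the $\theta_i$ piece with $d\mu(\theta_i)$, and applying Lemma~\ref{x_1} plus the coefficient rewriting via Lemma~\ref{degree}(3) and $\epsilon(\theta_i)=1$ to the $\mathcal{F}_{i-1}$ piece. No gaps.
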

		
		\begin{proof}
			By \eqref{xi} we have
			\begin{align*}
				\partial_ip_i(\xi)&=\partial_ip_i\left(\sum_{x_1,x_1',x_2\in\widehat{\B}}\epsilon(x_2)\epsilon(x_1,x_1',x_2)(u\otimes x_1)(u\otimes x_1')(u\otimes\hat{x}_2)\right)\\
				&=\partial_ip_i\left(\sum_{x_1,x_1',x_2\in\widehat{\B}}\epsilon(\hat{x}_2)\epsilon(x_1,x_1',\hat{x}_2)(u\otimes x_1)(u\otimes x_1')(u\otimes x_2)\right)\\
				&=\sum_{x_2\in\mathcal{F}_i}\left(\partial_ip_i\left(\sum_{x_1,x_1'\in\widehat{\B}}\epsilon(\hat{x}_2)\epsilon(x_1,x_1',\hat{x}_2)(u\otimes x_1)(u\otimes x_1')(u\otimes x_2)\right)\right)
			\end{align*}
			where the second equality holds because $\hat{\hat{x}}_2=x_2$ and the last equality holds by the definition of $p_i$. Since $\mathcal{F}_{i}=\mathcal{F}_{i-1}\sqcup\{\theta_i,\hat{\theta}_i\}$, we have
			\begin{align*}
				&\sum_{x_2\in\mathcal{F}_i}\left(\partial_ip_i\left(\sum_{x_1,x_1'\in\widehat{\B}}\epsilon(\hat{x}_2)\epsilon(x_1,x_1',\hat{x}_2)(u\otimes x_1)(u\otimes x_1')(u\otimes x_2)\right)\right)\\
				&=\sum_{x_2\in\mathcal{F}_{i-1}}\epsilon(\hat{x}_2)\left(\partial_ip_i\left(\sum_{x_1,x_1'\in\widehat{\B}}\epsilon(x_1,x_1',\hat{x}_2)(u\otimes x_1)(u\otimes x_1')(u\otimes x_2)\right)\right)\\
				&\quad+\epsilon(\hat{\theta}_i)\partial_ip_i\left(\sum_{x_1,x_1'\in \widehat{\B}}\epsilon(x_1,x_1',\hat{\theta}_i)(u\otimes x_1)(u\otimes x_1')(u\otimes\theta_i)\right)\\
				&\quad+\epsilon(\theta_i)\partial_ip_i\left(\sum_{x_1,x_1'\in \widehat{\B}}\epsilon(x_1,x_1',{\theta}_i)(u\otimes x_1)(u\otimes x_1')(u\otimes\hat{\theta}_i)\right)\\
				&=\sum_{x_2\in\mathcal{F}_{i-1}}\epsilon(\hat{x}_2)\left(\partial_ip_i\left(\sum_{x_1,x_1'\in\widehat{\B}}\epsilon(x_1,x_1',\hat{x}_2)(u\otimes x_1)(u\otimes x_1')(u\otimes x_2)\right)\right)\\
				&\quad+\epsilon(\hat{\theta}_i)\partial_ip_i\left(\sum_{x_1,x_1'\in \widehat{\B}}\epsilon(x_1,x_1',\hat{\theta}_i)(u\otimes x_1)(u\otimes x_1')(u\otimes\theta_i)\right)
			\end{align*}
			where the last equality holds by Lemma \ref{degree} (2). By Lemma \ref{x_1}, we have
			\begin{align*}
				&\sum_{x_2\in\mathcal{F}_{i-1}}\epsilon(\hat{x}_2)\left(\partial_ip_i\left(\sum_{x_1,x_1'\in\widehat{\B}}\epsilon(x_1,x_1',\hat{x}_2)(u\otimes x_1)(u\otimes x_1')(u\otimes x_2)\right)\right)\\
				&=2\sum_{\substack{x_1\in\mathcal{F}_i\\x_2\in\mathcal{F}_{i-1}}}\epsilon(\hat{x}_2)\epsilon(\theta_i,x_1,\hat{x}_2)(u\otimes x_1)(u\otimes x_2).
			\end{align*}
			By Lemmas \ref{epsilon(x)} and \ref{degree} (3), we have
			\[
			\epsilon(\hat{x}_2)\epsilon(\theta_i,x_1,\hat{x}_2)=\epsilon(\theta_i)\epsilon(x_1,x_2,\hat{\theta}_i)=\epsilon(x_1,x_2,\hat{\theta}_i)
			\]
			for $x_1,x_2\in\widehat{\B}$. This implies that
			\begin{align}
				\label{partial p}&\sum_{x_2\in\mathcal{F}_{i-1}}\epsilon(\hat{x}_2)\left(\partial_ip_i\left(\sum_{x_1,x_1'\in\widehat{\B}}\epsilon(x_1,x_1',\hat{x}_2)(u\otimes x_1)(u\otimes x_1')(u\otimes x_2)\right)\right)\\
				\notag&=2\sum_{\substack{x_1\in\mathcal{F}_i\\x_2\in\mathcal{F}_{i-1}}}\epsilon(x_1,x_2,\hat{\theta}_i)(u\otimes x_1)(u\otimes x_2).
			\end{align}
			On the other hand, by Lemma \ref{degree} (2), we get
			\begin{align}
				\label{p_idmu}&p_i\left(\sum_{x_1,x_1'\in \widehat{\B}}\epsilon(x_1,x_1',\hat{\theta}_i)(u\otimes x_1)(u\otimes x_1')(u\otimes\theta_i)\right)\\
				\notag&=p_i\left(\sum_{x_1,x_1'\in\widehat{\B}}\epsilon(x_1,x_1',\hat{\theta}_i)(u\otimes x_1)(u\otimes x_1')(u\otimes\theta_i)\right)\\
				\notag&\quad-(-1)^{|\theta_i||\hat{\theta}_i|}p_i\left(\sum_{x_1,x_1'\in\widehat{\B}}\epsilon(x_1,x_1',\theta_i)(u\otimes x_1)(u\otimes x_1')(u\otimes\hat{\theta}_i)\right)\\
				\notag&=p_i\left(d\mu(\theta_i)\right)
			\end{align}
			where the last equality holds by \eqref{dmu}. Thus the proof is finished by combining \eqref{partial p} and \eqref{p_idmu}.
		\end{proof}
		Hereafter, we write
		\begin{equation}
			\label{alp}
			\alpha(x_1,x_2)=\epsilon(x_1,x_2,\hat{\theta}_i)(u\otimes x_1)(u\otimes x_2)(u\otimes\theta_i).
		\end{equation}
		Let $\partial_ip_i(d\mu(\theta_i))=\sum_{x_1\le x_2\in \mathcal{F}_i}\hat{a}(x_1,x_2)(u\otimes x_1)(u\otimes x_2)$ for $\hat{a}(x_1,x_2)\in\Q$.
		\begin{lemma}
			\label{hat-a}
			There are equalities
			\[
			\hat{a}(x_1,x_2)=
			\begin{cases}
				\epsilon(x_1,x_2,\hat{\theta}_i)&\theta_i<x_1=x_2\in\mathcal{F}_i,\\
				4\epsilon(x_1,x_2,\hat{\theta}_i)&\theta_i=x_1<x_2\in\mathcal{F}_i,\\
				(-1)^{|x_1|+|x_2|}2\epsilon(x_1,x_2,\hat{\theta}_i)&\theta_i<x_1<x_2\in \mathcal{F}_i,\\
				3\epsilon(x_1,x_2,\hat{\theta}_i)&\theta_i=x_1=x_2\in\mathcal{F}_i.
			\end{cases}
			\]
		\end{lemma}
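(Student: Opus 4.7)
The plan is to start from the explicit formula for $p_i(d\mu(\theta_i))$ extracted as \eqref{p_idmu} inside the proof of Lemma \ref{xi_i}, namely
\[
p_i(d\mu(\theta_i))=\sum_{x_1,x_2\in\mathcal{F}_i}\epsilon(x_1,x_2,\hat{\theta}_i)(u\otimes x_1)(u\otimes x_2)(u\otimes\theta_i),
\]
apply the derivation $\partial_i$ termwise via Lemma \ref{deri}, and then collect the resulting quadratic monomials into the canonical ordered form $(u\otimes x_1)(u\otimes x_2)$ with $x_1\le x_2$. The four cases in the statement correspond exactly to the four patterns (off-diagonal with neither factor equal to $\theta_i$, mixed with $x_1=\theta_i$, fully diagonal at $\theta_i$, and diagonal away from $\theta_i$) that arise when one counts how many unordered pairs $(y_1,y_2)$ in the sum contribute to a fixed ordered monomial.

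Concretely, for $\theta_i<x_1=x_2$ only the pair $(x_1,x_1)$ contributes and Lemma \ref{deri} (1) gives coefficient $(-1)^{2|x_1|}\epsilon(x_1,x_1,\hat{\theta}_i)=\epsilon(x_1,x_1,\hat{\theta}_i)$. For $\theta_i<x_1<x_2$ both pairs $(x_1,x_2)$ and $(x_2,x_1)$ feed the monomial $(u\otimes x_1)(u\otimes x_2)$; combining the symmetry $\epsilon(x_2,x_1,\hat{\theta}_i)=(-1)^{|x_1||x_2|}\epsilon(x_1,x_2,\hat{\theta}_i)$ from Lemma \ref{comm} with the graded-commutativity sign $(-1)^{|u\otimes x_1||u\otimes x_2|}=(-1)^{|x_1||x_2|}$ needed to reorder the two factors doubles the contribution to $2(-1)^{|x_1|+|x_2|}\epsilon(x_1,x_2,\hat{\theta}_i)$. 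The fully diagonal case $\theta_i=x_1=x_2$ is immediate from Lemma \ref{deri} (3) and yields the factor $3$.

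The only case requiring extra care is $\theta_i=x_1<x_2$. The pair $(x_2,\theta_i)$ is handled directly by Lemma \ref{deri} (2), but the pair $(\theta_i,x_2)$ produces $\partial_i((u\otimes\theta_i)(u\otimes x_2)(u\otimes\theta_i))$, which is not explicitly covered by Lemma \ref{deri}. I will dispose of it by observing that this expression can be nonzero only when $|u\otimes\theta_i|$ is even, and then commuting the first and third factors to rewrite the monomial as $(u\otimes x_2)(u\otimes\theta_i)^2$, on which $\partial_i$ acts by the Leibniz rule to give $2(u\otimes x_2)(u\otimes\theta_i)$. Combining both contributions and using $\epsilon(x_2,\theta_i,\hat{\theta}_i)=\epsilon(\theta_i,x_2,\hat{\theta}_i)$, which holds because $|\theta_i|$ must be even under the nonvanishing hypothesis, yields the factor $4$. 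This parity observation is the main subtlety of the proof; the rest is bookkeeping, and no serious obstacle is anticipated beyond keeping the graded signs straight.
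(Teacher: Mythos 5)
Your proposal follows essentially the same path as the paper's proof: start from $p_i(d\mu(\theta_i))=\sum_{x_1,x_2\in\mathcal{F}_i}\alpha(x_1,x_2)$, split into the four order-patterns, apply $\partial_i$ via Lemma~\ref{deri}, and use the parity observation from Lemma~\ref{degree}~(4) to control signs and enable commutation in the mixed case. The only cosmetic differences are that you rederive $\alpha(x_1,x_2)=\alpha(x_2,x_1)$ from Lemma~\ref{comm} and graded commutativity instead of citing Lemma~\ref{degree}~(1), and in the $\theta_i=x_1<x_2$ case you compute both ordered pairs directly rather than first collapsing them to $2\partial_i\alpha(x_1,x_2)$ (also, ``commuting the first and third factors'' should read ``first and second''); neither difference affects correctness.
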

		
		\begin{proof}
			We have
			\begin{align*}
				p_i(d\mu(\theta_i))&=\sum_{x_1,x_2\in \mathcal{F}_i}\alpha(x_1,x_2)\\
				&=\sum_{\theta_i<x_1=x_2\in \mathcal{F}_i}\alpha(x_1,x_2)+\sum_{\theta_i<x_1<x_2\in \mathcal{F}_i}(\alpha(x_1,x_2)+\alpha(x_2,x_1))\\
				&\quad+\sum_{\theta_i=x_1<x_2\in \mathcal{F}_i}(\alpha(x_1,x_2)+\alpha(x_2,x_1))+\sum_{\theta_i=x_1=x_2\in \mathcal{F}_i}\alpha(x_1,x_2).
			\end{align*}
			By Lemma \ref{degree} (1), we have $\alpha(x_1,x_2)=\alpha(x_2,x_1)$. By Lemma \ref{deri}, we can make the following calculations, in which it suffices to assume $\alpha(x_1,x_2)\ne0$.

			\noindent(1) For $\theta_i<x_1=x_2$, $|x_1|,|x_2|,|\theta_i|$ are even by Lemma \ref{degree} (4). Then we have
			\begin{align*}
				\partial_i\alpha(x_1,x_2)&=(-1)^{|x_1|+|x_2|}\epsilon(x_1,x_2,\hat{\theta}_i)(u\otimes x_1)(u\otimes x_2)\\
				&=\epsilon(x_1,x_2,\hat{\theta}_i)(u\otimes x_1)(u\otimes x_2).
			\end{align*}

			\noindent(2) For $\theta_i=x_1<x_2$, $|x_1|,|x_2|,|\theta_i|$ are even by Lemma \ref{degree} (4). Then we have
			\begin{align*}
				\partial_i(\alpha(x_1,x_2)+\alpha(x_2,x_1))&=2\partial_i(\alpha(x_1,x_2))\\
				&=2\epsilon(x_1,x_2,\hat{\theta}_i)\partial_i\left((u\otimes\theta_i)^2(u\otimes x_2)\right)\\
				&=4\epsilon(x_1,x_2,\hat{\theta}_i)(u\otimes\theta_i)(u\otimes x_2)\\
				&=4\epsilon(x_1,x_2,\hat{\theta}_i)(u\otimes x_1)(u\otimes x_2).
			\end{align*}

			\noindent(3) For $\theta_i<x_1<x_2$,
			\begin{align*}
				\partial_i(\alpha(x_1,x_2)+\alpha(x_2,x_1))&=2\partial_i(\alpha(x_1,x_2))\\
				&=(-1)^{|x_1|+|x_2|}2\epsilon(x_1,x_2,\hat{\theta}_i)(u\otimes x_1)(u\otimes x_2).
			\end{align*}

			\noindent(4) For $\theta_i=x_1=x_2$,
			\[
			\partial_i\alpha(x_1,x_2)=3\epsilon(x_1,x_2,\hat{\theta}_i)(u\otimes x_1)(u\otimes x_2).
			\]
			Thus the statement is proved.
		\end{proof}
		Note that we can write
		\[
		\partial_ip_i(\xi)=\sum_{x_1\le x_2\in \mathcal{F}_i}a(x_1,x_2)(u\otimes x_1)(u\otimes x_2).
		\]
		for $a(x_1,x_2)\in\Q$. We compute $a(x_1,x_2)$ explicitly in order to compare them with the coefficients of $(u\otimes x_1)(u\otimes x_2)$ in $\partial_ip_i(d\mu)$ later, for all $x_1\le x_2\in\mathcal{F}_i$.

		\begin{proposition}
			\label{a}
			For $x_1,x_2\in \mathcal{F}_i$, we have
			\[
			a(x_1,x_2)=
			\begin{cases}
				3\epsilon(x_1,x_2,\hat{\theta}_i)&\theta_i<x_1=x_2,\\
				6\epsilon(x_1,x_2,\hat{\theta}_i)&\theta_i=x_1<x_2,\\
				6\epsilon(x_1,x_2,\hat{\theta}_i)&\theta_i<x_1<x_2,\\
				3\epsilon(x_1,x_2,\hat{\theta}_i)&\theta_i=x_1=x_2.
			\end{cases}
			\]
		\end{proposition}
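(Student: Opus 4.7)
The plan is to combine Lemma \ref{xi_i} with Lemma \ref{hat-a} via a four-case analysis. Lemma \ref{xi_i} writes
\[
\partial_ip_i(\xi) = \epsilon(\hat\theta_i)\partial_ip_i(d\mu(\theta_i)) + 2\sum_{\substack{x_1\in\mathcal{F}_i\\x_2\in\mathcal{F}_{i-1}}}\epsilon(x_1,x_2,\hat\theta_i)(u\otimes x_1)(u\otimes x_2),
\]
and Lemma \ref{hat-a} gives the coefficients of the first summand. For each ordered pair $x_1\le x_2$ in $\mathcal{F}_i$ I would read off the contribution of each summand and add them.

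Before the case work I would establish two simplifications. First, from Lemma \ref{comm} and the parity congruence $(2n-|y_1|)(2n-|y_2|)\equiv|y_1||y_2|\pmod 2$, one has
\[
\epsilon(y_2,y_1,\hat\theta_i)(u\otimes y_2)(u\otimes y_1)=\epsilon(y_1,y_2,\hat\theta_i)(u\otimes y_1)(u\otimes y_2),
\]
so the correction sum reorders into the canonical form $x_1\le x_2$ with no extra sign; its contribution at $(x_1,x_2)$ is therefore $2\epsilon(x_1,x_2,\hat\theta_i)$ times the number of realizations $(x_1',x_2')$ with $\{x_1',x_2'\}=\{x_1,x_2\}$ and $x_2'\in\mathcal{F}_{i-1}$. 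Second, in every relevant case we have $\epsilon(\hat\theta_i)=1$: in Cases 1, 2, 4 the non-vanishing of $(u\otimes x_1)^2$ or $(u\otimes\theta_i)^2$, together with \eqref{epsilon degree}, forces $|\hat\theta_i|$, and hence $|\theta_i|=2n-|\hat\theta_i|$, to be even, so $\epsilon(\hat\theta_i)=(-1)^{|\theta_i||\hat\theta_i|}=1$ by Lemma \ref{epsilon(x)}; in Case 3, Lemma \ref{epsilon(x)} together with $|x_1|+|x_2|=|\hat\theta_i|$ and $|\theta_i|+|\hat\theta_i|=2n$ yields $(-1)^{|x_1|+|x_2|}\epsilon(\hat\theta_i)=(-1)^{|\hat\theta_i|(1+|\theta_i|)}=1$, which is exactly what is needed to absorb the sign in $\hat a(x_1,x_2)$.

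Granting these simplifications the arithmetic in each case is immediate: Case 1 with $x_1=x_2\in\mathcal{F}_{i-1}$ gives $(1+2)\epsilon(x_1,x_2,\hat\theta_i)$, Case 2 with $x_2\in\mathcal{F}_{i-1}$ gives $(4+2)\epsilon(x_1,x_2,\hat\theta_i)$, Case 3 with both in $\mathcal{F}_{i-1}$ gives $(2+4)\epsilon(x_1,x_2,\hat\theta_i)$, and Case 4 gives $3\epsilon(x_1,x_2,\hat\theta_i)$ since the correction sum contributes nothing when $\theta_i\notin\mathcal{F}_{i-1}$. The main obstacle is to verify that the remaining sub-cases, in which one or both of $x_1,x_2$ equals $\hat\theta_i$ instead of lying in $\mathcal{F}_{i-1}$, do not disturb the formulas. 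In each such sub-case (for instance $x_1=x_2=\hat\theta_i$ in Case 1, $x_2=\hat\theta_i$ in Case 2, or $x_1=\hat\theta_i$ or $x_2=\hat\theta_i$ in Case 3), a degree count via \eqref{epsilon degree} forces some factor to have degree zero, so $\epsilon(x_1,x_2,\hat\theta_i)$ itself vanishes and the claimed equality is trivially satisfied.
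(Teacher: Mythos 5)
Your proposal is correct and follows essentially the same route as the paper: substitute the decomposition from Lemma \ref{xi_i}, read off the first summand via Lemma \ref{hat-a}, use the parity facts from Lemmas \ref{epsilon(x)}, \ref{degree}(1) and \ref{degree}(4) to control the signs in $\epsilon(\hat\theta_i)\hat a(x_1,x_2)$ and in the correction sum, and count how many ordered realizations each canonical unordered pair contributes. The one cosmetic difference is in how the degenerate sub-cases (for example $x_2=\hat\theta_i$) are retired: you check by a degree count that $\epsilon(x_1,x_2,\hat\theta_i)=0$ there, whereas the paper folds all of these into a single preliminary reduction ``it suffices to assume $\alpha(x_1,x_2)\ne0$,'' which it justifies once via Lemmas \ref{derivation}, \ref{xi_i}, \ref{hat-a}. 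Either packaging is fine, and your explicit parity identity $(2n-|y_1|)(2n-|y_2|)\equiv|y_1||y_2|\pmod 2$ is exactly the content of Lemma \ref{degree}(1), so nothing new is being assumed.
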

		
		\begin{proof}
			By Lemmas \ref{derivation}, \ref{xi_i} and \ref{hat-a}, $p_i(\xi)$ is a linear combination of $\alpha(x_1,x_2)$, where $\alpha(x_1,x_2)$ is as in \eqref{alp}. Thus it suffices to assume $\alpha(x_1,x_2)\ne0$. By Lemma \ref{deri}, we can make the following calculations.

			\noindent(1) For $\theta_i<x_1=x_2$, $|\theta_i|$ is even by Lemma \ref{degree} (4). Then since $\epsilon(\hat{\theta}_i)=1$ by Lemma \ref{epsilon(x)}, we have
			\[
			a(x_1,x_2)=\epsilon(\hat{\theta}_i)\hat{a}(x_1,x_2)+2\epsilon(x_1,x_2,\hat{\theta}_i)=3\epsilon(x_1,x_2,\hat{\theta}_i).
			\]

			\noindent(2) For $\theta_i=x_1<x_2$, similarly to the above, we get $\epsilon(\hat{\theta}_i)=1$. Then we have
			\[
			a(x_1,x_2)=\epsilon(\hat{\theta}_i)\hat{a}(x_1,x_2)+2\epsilon(x_1,x_2,\hat{\theta}_i)=6\epsilon(x_1,x_2,\hat{\theta}_i).
			\]

			\noindent(3) For $\theta_i<x_1<x_2$, we get $\epsilon(\hat{\theta}_i)=(-1)^{|\theta_i||\hat{\theta}_i|}=(-1)^{|\hat{\theta}_i|}=(-1)^{|x_1|+|x_2|}$ by Lemma \ref{epsilon(x)} and \eqref{epsilon degree}. Then by Lemma \ref{comm}, we have
			\begin{align*}
				a(x_1,x_2)&=\epsilon(\hat{\theta}_i)\hat{a}(x_1,x_2)+2\epsilon(x_1,x_2,\hat{\theta}_i)+(-1)^{|u\otimes x_1||u\otimes x_2|}2\epsilon(x_2,x_1,\hat{\theta}_i)\\
				&=(2((-1)^{|x_1|+|x_2|})^2+2+2)\epsilon(x_1,x_2,\hat{\theta}_i)\\
				&=6\epsilon(x_1,x_2,\hat{\theta}_i).
			\end{align*}

			\noindent(4) For $\theta_i=x_1=x_2$, similarly to the $\theta_i<x_1=x_2$ case, we get $\epsilon(\hat{\theta}_i)=1$. Then we have
			\[
			a(x_1,x_2)=\epsilon(\hat{\theta}_i)\hat{a}(x_1,x_2)=3\epsilon(x_1,x_2,\hat{\theta}_i).
			\]
			Thus the proof is finished.
		\end{proof}

		Next, we describe $\partial_ip_i(d\mu)$. We extend the definition of $\lambda(x)$ to $\widehat{\B}-\B_-$ as follows. Let $\lambda(x)=-\lambda(\hat{x})$ for $n<|x|<2n$ and $\lambda(x)=0$ for $|x|=n$.

		\begin{lemma}
			\label{partial-mu}
			For $1\le i\le m$, we have
			\begin{align*}
				\partial_ip_i(d\mu)&=\lambda(\theta_i)\partial_ip_i(d\mu(\theta_i))\\
				&\quad+2\sum_{\substack{x_1\in \mathcal{F}_i\\x_2\in \mathcal{F}_{i-1}}}(-1)^{|x_2||\hat{x}_2|}\lambda(x_2)\epsilon(x_1,x_2,\hat{\theta}_i)(u\otimes x_1)(u\otimes x_2).
			\end{align*}
		\end{lemma}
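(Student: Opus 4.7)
The plan is to expand $d\mu=\sum_{x\in\B_-}\lambda(x)\,d\mu(x)$ using \eqref{dmu}, apply $\partial_ip_i$ termwise, and isolate the contribution from $x=\theta_i$. Since $p_i(d\mu(\theta_i))$ involves only the first sum of \eqref{dmu} (the second being killed by $p_i$, as shown in \eqref{p_idmu}), the $x=\theta_i$ term contributes exactly $\lambda(\theta_i)\partial_ip_i(d\mu(\theta_i))$. The remaining task is to compute $\sum_{x\in\B_-\setminus\{\theta_i\}}\lambda(x)\,\partial_ip_i(d\mu(x))$ and show it equals the second summand.

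For $x\in\B_-\setminus\{\theta_i\}$ note $\hat{x}\neq\theta_i$ since $|\hat{x}|>n>|\theta_i|$. I would analyze each of the two sums in $d\mu(x)$ using Lemma~\ref{x_1}. The first sum has third factor $(u\otimes x)$, so $p_i$ kills it unless $x\in\mathcal{F}_{i-1}$; Lemma~\ref{x_1} with $x_2=x$ gives
\[
\partial_ip_i\Bigl(\sum_{x_1,x_1'\in\widehat\B}\epsilon(x_1,x_1',\hat{x})(u\otimes x_1)(u\otimes x_1')(u\otimes x)\Bigr)=2\sum_{x_1\in\mathcal{F}_i}\epsilon(\theta_i,x_1,\hat{x})(u\otimes x_1)(u\otimes x).
\]
The second sum has third factor $(u\otimes\hat{x})$, so $p_i$ kills it unless $\hat{x}\in\mathcal{F}_{i-1}$, i.e. $x\in\B_-\cap\mathcal{F}_{i-1}$; applying Lemma~\ref{x_1} with ``$x_2$'' $=\hat{x}$ and absorbing the $-(-1)^{|x||\hat{x}|}$ prefactor yields
\[
-2(-1)^{|x||\hat{x}|}\sum_{x_1\in\mathcal{F}_i}\epsilon(\theta_i,x_1,x)(u\otimes x_1)(u\otimes\hat{x}).
\]

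Next I would reindex to a common summation variable $x_2$. In the first-sum contribution set $x_2:=x\in\B_-\cap\mathcal{F}_{i-1}$; in the second set $x_2:=\hat{x}\in\{\hat\theta_1,\dots,\hat\theta_{i-1}\}$, so $x=\hat{x}_2$ and $\lambda(x)=\lambda(\hat{x}_2)=-\lambda(x_2)$ by the extended definition, giving a coefficient $2(-1)^{|x_2||\hat{x}_2|}\lambda(x_2)\epsilon(\theta_i,x_1,\hat{x}_2)$ in both cases. To convert this to the form in the lemma, apply Lemma~\ref{degree}(3) with $x_3=x_2$:
\[
\epsilon(\hat{x}_2)\epsilon(\theta_i,x_1,\hat{x}_2)=\epsilon(\theta_i)\epsilon(x_1,x_2,\hat\theta_i)=\epsilon(x_1,x_2,\hat\theta_i),
\]
since $\epsilon(\theta_i)=1$ by Lemma~\ref{epsilon(x)}. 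For $x_2\in\B_-$ one has $\epsilon(\hat{x}_2)=(-1)^{|x_2||\hat{x}_2|}$, while for $|x_2|>n$ one has $\epsilon(\hat{x}_2)=1$; in either case the identity above reduces the coefficient to $2(-1)^{|x_2||\hat{x}_2|}\lambda(x_2)\epsilon(x_1,x_2,\hat\theta_i)$.

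Finally, the combined range of $x_2$ is $(\B_-\cap\mathcal{F}_{i-1})\sqcup\{\hat\theta_1,\dots,\hat\theta_{i-1}\}=\mathcal{F}_{i-1}\setminus\B_n$, and enlarging to $x_2\in\mathcal{F}_{i-1}$ is harmless because $\lambda(x_2)=0$ for $|x_2|=n$. This yields precisely the stated formula. The main obstacle is the sign bookkeeping: one must align the $(-1)^{|x||\hat{x}|}$ from $d\mu(x)$, the minus sign in $\lambda(\hat{x}_2)=-\lambda(x_2)$, and the factor $\epsilon(\hat{x}_2)$ supplied by Lemma~\ref{degree}(3), and verify that the two cases $|x_2|<n$ and $|x_2|>n$ both collapse to the same uniform expression $(-1)^{|x_2||\hat{x}_2|}\lambda(x_2)\epsilon(x_1,x_2,\hat\theta_i)$.
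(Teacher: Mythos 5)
Your proposal is correct and follows essentially the same route as the paper: isolate the $\theta_i$ term, apply Lemma \ref{x_1} for the remaining $x_2\in\mathcal{F}_{i-1}\cap\B_-$, reindex the $\hat{x}$-contributions via $\lambda(\hat{x}_2)=-\lambda(x_2)$, and rewrite $\epsilon(\theta_i,x_1,\hat{x}_2)$ as $\epsilon(\hat{x}_2)^{-1}\epsilon(x_1,x_2,\hat\theta_i)$ using Lemmas \ref{degree}(3) and \ref{epsilon(x)}, checking both degree cases collapse to $(-1)^{|x_2||\hat{x}_2|}\epsilon(x_1,x_2,\hat\theta_i)$. The paper packages the identical computation through the auxiliary quantities $A(x_2),B(x_2)$ for the two sums in $d\mu(x_2)$ and $C,D,E$ for the three range-pieces, but the argument and the key lemmas invoked are the same. (One small slip in your exposition: the first-sum contribution gives coefficient $2\lambda(x_2)\epsilon(\theta_i,x_1,\hat{x}_2)$, without the $(-1)^{|x_2||\hat{x}_2|}$ factor you claim appears ``in both cases''; the factor arises there only after invoking Lemma \ref{degree}(3) and $\epsilon(\hat{x}_2)=(-1)^{|x_2||\hat{x}_2|}$, whereas in the $\hat{x}$-contribution it is already present from $(-1)^{|x||\hat{x}|}$ and $\epsilon(\hat{x}_2)=1$ — so the two cases agree only in the final form, not the intermediate one.)
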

		
		\begin{proof}
			By definition, we have
			\begin{align*}
				\partial_ip_i(d\mu)&=\partial_ip_i\left(\sum_{x_2\in\B_-}\lambda(x_2)d\mu(x_2)\right)\\
				&=\sum_{x_2\in\mathcal{F}_i\cap\B_-}\lambda(x_2)\partial_ip_i\left(d\mu(x_2)\right)\\
				&=\lambda(\theta_i)\partial_ip_i(d\mu(\theta_i))+\sum_{x_2\in \mathcal{F}_{i-1}\cap\B_-}\lambda(x_2)\partial_ip_i(d\mu(x_2)).
			\end{align*}
			where the second equality holds by the definition of $p_i$ and the last equality holds because $\mathcal{F}_i\cap\B_-=(\mathcal{F}_{i-1}\cap\B_-)\sqcup\{\theta_i\}$. Let $x_2\in \mathcal{F}_{i-1}\cap\B_-$, and let
			\begin{align*}
				A(x_2)&=\sum_{x_1,x_1'\in\widehat{\B}}\epsilon(x_1,x_1',\hat{x}_2)(u\otimes x_1)(u\otimes x_1')(u\otimes x_2),\\
				B(x_2)&=\sum_{x_1,x_1'\in\widehat{\B}}\epsilon(x_1,x_1',x_2)(u\otimes x_1)(u\otimes x_1')(u\otimes\hat{x}_2).
			\end{align*}
			Then $d\mu(x_2)=A(x_2)-(-1)^{|x_2||\hat{x}_2|}B(x_2)$ by \eqref{dmu}. Since $x_2,\hat{x}_2\in\mathcal{F}_{i-1}$, by Lemma \ref{x_1}, we have
			\begin{align*}
				\partial_ip_i(A(x_2))&=2\sum_{x_1\in\mathcal{F}_i}\epsilon(\theta_i,x_1,\hat{x}_2)(u\otimes x_1)(u\otimes x_2),\\
				\partial_ip_i(B(x_2))&=2\sum_{x_1\in\mathcal{F}_i}\epsilon(\theta_i,x_1,x_2)(u\otimes x_1)(u\otimes \hat{x}_2).
			\end{align*}
			Then we have
			\begin{align*}
				\partial_ip_i(d\mu(x_2))&=\partial_ip_i\left(A(x_2)-(-1)^{|x_2||\hat{x}_2|}B(x_2)\right)\\
				&=2\sum_{x_1\in \mathcal{F}_i}\left(\epsilon(\theta_i,x_1,\hat{x}_2)(u\otimes x_1)(u\otimes x_2)\right.\\
				&\quad-\left.(-1)^{|x_2||\hat{x}_2|}\epsilon(\theta_i,x_1,x_2)(u\otimes x_1)(u\otimes \hat{x}_2)\right).
			\end{align*}
			By Lemmas \ref{epsilon(x)} and \ref{degree} (3), we have
			\begin{align*}
				\epsilon(\theta_i,x_1,\hat{x}_2)&=\frac{\epsilon(\theta_i)}{\epsilon(\hat{x}_2)}\epsilon(x_1,x_2,\hat{\theta}_i)=(-1)^{|x_2||\hat{x}_2|}\epsilon(x_1,x_2,\hat{\theta}_i),\\
				\epsilon(\theta_i,x_1,x_2)&=\frac{\epsilon(\theta_i)}{\epsilon(x_2)}\epsilon(x_1,\hat{x}_2,\hat{\theta}_i)=\epsilon(x_1,\hat{x}_2,\hat{\theta}_i).
			\end{align*}
			Then we have
			\begin{align*}
				\partial_ip_i(d\mu(x_2))&=2\sum_{x_1\in \mathcal{F}_i}(-1)^{|x_2||\hat{x}_2|}\left(\epsilon(x_1,x_2,\hat{\theta}_i)(u\otimes x_1)(u\otimes x_2)\right.\\
				&\quad-\left.\epsilon(x_1,\hat{x}_2,\hat{\theta}_i)(u\otimes x_1)(u\otimes \hat{x}_2)\right).
			\end{align*}
			Write $\beta(x_1,x_2)=(-1)^{|x_2||\hat{x}_2|}\epsilon(x_1,x_2,\hat{\theta}_i)(u\otimes x_1)(u\otimes x_2)$. Then we get
			\begin{align}
				\label{p(dmu)}&\sum_{x_2\in\mathcal{F}_{i-1}\cap\B_-}\lambda(x_2)\partial_ip_i(d\mu(x_2))\\
				\notag&=2\sum_{x_2\in\mathcal{F}_{i-1}\cap\B_-}\left(\lambda(x_2)\sum_{x_1\in\mathcal{F}_i}(-1)^{|x_2||\hat{x}_2|}\epsilon(x_1,x_2,\hat{\theta}_i)(u\otimes x_1)(u\otimes x_2)\right.\\
				\notag&\quad-\left.\lambda(x_2)\sum_{x_1\in\mathcal{F}_i}(-1)^{|\hat{x}_2||x_2|}\epsilon(x_1,\hat{x}_2,\hat{\theta}_i)(u\otimes x_1)(u\otimes\hat{x}_2)\right)\\
				\notag&=2\sum_{\substack{x_1\in\mathcal{F}_i\\x_2\in\mathcal{F}_{i-1}\cap\B_-}}\left(\lambda(x_2)\beta(x_1,x_2)-\lambda(x_2)\beta(x_1,\hat{x}_2)\right).
			\end{align}
			Let 
			\begin{align*}
				C&=\sum_{\substack{x_1\in\mathcal{F}_i\\x_2\in\mathcal{F}_{i-1}\cap\B_-}}\lambda(x_2)\beta(x_1,x_2),\\ D&=\sum_{\substack{x_1\in\mathcal{F}_i\\\hat{x}_2\in\mathcal{F}_{i-1}\cap\B_-}}\lambda(x_2)\beta(x_1,x_2),\\ E&=\sum_{\substack{x_1\in\mathcal{F}_i\\x_2\in\B_n}}\lambda(x_2)\beta(x_1,x_2).
			\end{align*}
			Since $\hat{\hat{x}}_2=x_2$ and $\lambda(\hat{x}_2)=-\lambda(x_2)$,
			\begin{align*}
				-\sum_{\substack{x_1\in\mathcal{F}_i\\x_2\in\mathcal{F}_{i-1}\cap\B_-}}\lambda(x_2)\beta(x_1,\hat{x}_2)&=\sum_{\substack{x_1\in\mathcal{F}_i\\x_2\in\mathcal{F}_{i-1}\cap\B_-}}\lambda(\hat{x}_2)\beta(x_1,\hat{x}_2)\\
				&=\sum_{\substack{x_1\in\mathcal{F}_i\\\hat{x}_2\in\mathcal{F}_{i-1}\cap\B_-}}\lambda(x_2)\beta(x_1,x_2)\\
				&=D.
			\end{align*}
			Since $\lambda(x_2)=0$ for $x_2\in\B_n$, we have $E=0$. it follows from \eqref{p(dmu)} that
			\begin{align*}
				&=\sum_{x_2\in\mathcal{F}_{i-1}\cap\B_-}\lambda(x_2)\partial_ip_i(d\mu(x_2))\\
				&=2\sum_{\substack{x_1\in\mathcal{F}_i\\x_2\in\mathcal{F}_{i-1}\cap\B_-}}\lambda(x_2)\beta(x_1,x_2)+2\sum_{\substack{x_1\in\mathcal{F}_i\\\hat{x}_2\in\mathcal{F}_{i-1}\cap\B_-}}\lambda(x_2)\beta(x_1,x_2)\\
				&\quad+2\sum_{\substack{x_1\in\mathcal{F}_i\\x_2\in\B_n}}\lambda(x_2)\beta(x_1,x_2)\\
				&=2C+2D+2E.
			\end{align*}
			On the other hand, since
			\[
			\mathcal{F}_{i-1}=(\mathcal{F}_{i-1}\cap\B_-)\sqcup\B_n\sqcup\{x_2\in\widehat{\B}\mid\hat{x}_2\in\mathcal{F}_{i-1}\cap\B_-\},
			\]
			we have
			\begin{align*}
				C+D+E&=\sum_{\substack{x_1\in \mathcal{F}_i\\x_2\in \mathcal{F}_{i-1}}}\lambda(x_2)\beta(x_1,x_2)\\
				&=\sum_{\substack{x_1\in \mathcal{F}_i\\x_2\in \mathcal{F}_{i-1}}}(-1)^{|x_2||\hat{x}_2|}\lambda(x_2)\epsilon(x_1,x_2,\hat{\theta}_i)(u\otimes x_1)(u\otimes x_2).
			\end{align*}
			Thus the proof is finished.
		\end{proof}

		We show a property of $\lambda(x)$ that we are going to use.

		\begin{lemma}
			\label{lambda}
			If $x_1,x_2,x_3\in\widehat{\B}$ satisfy $|x_1|+|x_2|+|x_3|=2n$, then
			\[
			\sum_{i=1}^3\epsilon(\hat{x}_i)\epsilon(x_i)\lambda(x_i)=3.
			\]
		\end{lemma}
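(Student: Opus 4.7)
The plan is to reduce the lemma to a pointwise identity: I will show that for every $x\in\widehat{\B}$,
\[
\epsilon(\hat{x})\epsilon(x)\lambda(x)=\frac{3(n-|x|)}{n}.
\]
Once this is in hand, the hypothesis $|x_1|+|x_2|+|x_3|=2n$ immediately gives
\[
\sum_{i=1}^3\epsilon(\hat{x}_i)\epsilon(x_i)\lambda(x_i)=\frac{3}{n}\bigl(3n-(|x_1|+|x_2|+|x_3|)\bigr)=\frac{3(3n-2n)}{n}=3,
\]
which is the desired conclusion. Thus the content of the lemma is really a per-element computation, and the three arguments $x_1,x_2,x_3$ decouple completely.

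To verify the pointwise identity I split into three cases according to the position of $|x|$ relative to $n$. If $x\in\B_-$, so $|x|<n$, then Lemma \ref{epsilon(x)} gives $\epsilon(x)=1$ and $\epsilon(\hat{x})=(-1)^{|x||\hat{x}|}$, while the definition of $\lambda$ on $\B_-$ says $\lambda(x)=\frac{3(n-|x|)}{n}\epsilon(\hat{x})$. Substituting and using $\epsilon(\hat{x})^2=1$ yields exactly $\frac{3(n-|x|)}{n}$.

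For $|x|>n$, I set $y=\hat{x}\in\B_-$, so that $\hat{y}=x$ and the extended definition of $\lambda$ gives $\lambda(x)=-\lambda(y)$. The first case applied to $y$ produces $\epsilon(\hat{y})\epsilon(y)\lambda(y)=\frac{3(n-|y|)}{n}$, and since $\epsilon(\hat{x})\epsilon(x)=\epsilon(y)\epsilon(\hat{y})$ and $|y|=2n-|x|$, I obtain
\[
\epsilon(\hat{x})\epsilon(x)\lambda(x)=-\frac{3(n-|y|)}{n}=-\frac{3(|x|-n)}{n}=\frac{3(n-|x|)}{n}.
\]
The case $|x|=n$ is trivial because $\lambda(x)=0$ and $n-|x|=0$.

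There is no serious obstacle here; the only thing to watch is sign bookkeeping, but this collapses because $\epsilon(\hat{x})=\pm\epsilon(x)$ and the product $\epsilon(\hat{x})\epsilon(x)$ is essentially a square. The structural point is that the affine function $x\mapsto \epsilon(\hat{x})\epsilon(x)\lambda(x)$ of $|x|$ is rigged precisely so that summing the values of $\frac{3(n-|x_i|)}{n}$ over three arguments whose degrees total $2n$ gives the constant $3$.
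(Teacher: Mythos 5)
Your proof is correct and uses the same core computation as the paper: establishing that $\epsilon(\hat{x})\epsilon(x)\lambda(x)=\tfrac{3(n-|x|)}{n}$ pointwise (for $|x|<n$ directly from $\lambda(x)=\tfrac{3(n-|x|)}{n}\epsilon(\hat x)$ and $\epsilon(\hat x)^2=1$, and for $|x|>n$ by flipping to $\hat x\in\B_-$). The paper verifies this identity only for $|x|\ne n$ and then finishes with a small WLOG case-split on how many of $|x_1|\le|x_2|\le|x_3|$ lie below, equal to, or above $n$; you instead observe that the identity holds trivially at $|x|=n$ as well (since $\lambda(x)=0=n-|x|$) and sum once, which removes the case-split entirely. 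That is a mild streamlining rather than a different method, and it is sound.
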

		
		\begin{proof}
			We may assume $|x_1|\le|x_2|\le|x_3|$. Let $x\in\widehat{\B}$. By Lemma \ref{epsilon(x)}, we have that for $0<|x|<n$,
			\[
			\epsilon(\hat{x})\epsilon(x)\lambda(x)=\epsilon(\hat{x})^2\epsilon(x)\frac{3(n-|x|)}{n}=\frac{3(n-|x|)}{n}
			\]
			and for $n<|x|<2n$, we have
			\[
			\epsilon(\hat{x})\epsilon(x)\lambda(x)=-\epsilon(\hat{x})\epsilon(x)^2\frac{3(n-|\hat{x}|)}{n}=-\frac{3(n-2n+|x|)}{n}=\frac{3(n-|x|)}{n}.
			\]
			Then for $|x_1|\le|x_2|\le|x_3|<n$, we have
			\[
			\sum_{i=1}^3\epsilon(\hat{x}_i)\epsilon(x_i)\lambda(x_i)=\frac{3}{n}(3n-|x_1|-|x_2|-|x_3|)=3.
			\]
			For $|x_1|\le|x_2|\le|x_3|=n$, we have $|x_1|+|x_2|=n$ and $\lambda(x_3)=0$. Then
			\[
			\sum_{i=1}^3\epsilon(\hat{x}_i)\epsilon(x_i)\lambda(x_i)=\frac{3}{n}(2n-|x_1|-|x_2|)=3.
			\]
			For $|x_1|\le|x_2|<n<|x_3|$, we have
			\[
			\sum_{i=1}^3\epsilon(\hat{x}_i)\epsilon(x_i)\lambda(x_i)=\frac{3}{n}(3n-|x_1|-|x_2|-|x_3|)=3.
			\]
			Thus the proof is finished.
		\end{proof}

		Now we are ready to prove Proposition \ref{mu}.

		\begin{proof}
			[Proof of Proposition \ref{mu}]
			As mentioned above, it is sufficient to prove \eqref{xi mu partial} for $1\le i\le m$. We can write
			\[
			\partial_ip_i(d\mu)=\sum_{x_1\le x_2\in \mathcal{F}_i}b(x_1,x_2)(u\otimes x_1)(u\otimes x_2)
			\]
			for $b(x_1,x_2)\in\Q$. Then we aim to show $a(x_1,x_2)=b(x_1,x_2)$ for all $x_1\le x_2\in \mathcal{F}_i$, where $a(x_1,x_2)$ is as in Proposition \ref{a}. By Lemmas \ref{derivation}, \ref{hat-a} and \ref{partial-mu}, $p_i(d\mu)$ is a linear combination of $\alpha(x_1,x_2)$ where $\alpha(x_1,x_2)$ is as in \eqref{alp}. Thus it suffices to assume $\alpha(x_1,x_2)\ne0$. Since $|x_1|+|x_2|+|\theta_i|=2n$ by \eqref{epsilon degree}, we have
			\[
			\epsilon(x_1)\epsilon(\hat{x}_1)\lambda(x_1)+\epsilon(x_2)\epsilon(\hat{x}_2)\lambda(x_2)+\epsilon(\theta_i)\epsilon(\hat{\theta}_i)\lambda({\theta}_i)=3
			\]
			by Lemma \ref{lambda}. Then by Lemma \ref{deri}, we can make the following calculations.

			\noindent(1) For $\theta_i<x_1=x_2$, $x_1,x_2,\theta_i\notin\B_n$ since $|x_1|+|x_2|+|\theta_i|=2n$. Since $|x_1|,|x_2|,|\theta_i|$ are even by Lemma \ref{degree} (4), $\epsilon(x)\epsilon(\hat{x})\lambda(x)=\lambda(x)$ for $x=x_1,x_2,\theta_i$ by Lemma \ref{epsilon(x)}. Then we get
			\begin{align*}
				b(x_1,x_2)&=\lambda(\theta_i)\hat{a}(x_1,x_2)+2\lambda(x_2)\epsilon(x_1,x_2,\hat{\theta}_i)\\
				&=(\epsilon(\hat{x}_1)\epsilon(x_1)\lambda(x_1)+\epsilon(\hat{x}_2)\epsilon(x_2)\lambda(x_2)+\epsilon(\hat{\theta}_i)\epsilon(\theta_i)\lambda(\theta_i))\epsilon(x_1,x_2,\hat{\theta}_i)\\
				&=3\epsilon(x_1,x_2,\hat{\theta}_i).
			\end{align*}

			\noindent(2) For $\theta_i=x_1<x_2$, $\theta_i=x_1\notin\B_n$ since $|x_1|+|x_2|+|\theta_i|=2n$. If $x_2\notin\B_n$, similarly to the above, we have $\epsilon(x)\epsilon(\hat{x})\lambda(x)=\lambda(x)$ for $x=x_1,x_2,\theta_i$. If $x_2\in\B_n$, we also have $\epsilon(x)\epsilon(\hat{x})\lambda(x)=\lambda(x)$ for $x=x_1,x_2,\theta_i$ since $\lambda(x_2)=0$. Then we get
			\begin{align*}
				b(x_1,x_2)&=\lambda(\theta_i)\hat{a}(x_1,x_2)+2\lambda(x_2)\epsilon(x_1,x_2,\hat{\theta}_i)\\
				&=2(\epsilon(\hat{x}_1)\epsilon(x_1)\lambda(x_1)+\epsilon(\hat{x}_2)\epsilon(x_2)\lambda(x_2)+\epsilon(\hat{\theta}_i)\epsilon(\theta_i)\lambda(\theta_i))\epsilon(x_1,x_2,\hat{\theta}_i)\\
				&=6\epsilon(x_1,x_2,\hat{\theta}_i).
			\end{align*}

			\noindent(3) For $\theta_i<x_1<x_2$, we have
			\begin{align*}
				&(-1)^{|x_2||\hat{x}_2|}2\lambda(x_2)\epsilon(x_1,x_2,\hat{\theta}_i)+(-1)^{|u\otimes x_1||u\otimes x_2|+|x_1||\hat{x}_1|}2\lambda(x_1)\epsilon(x_2,x_1,\hat{\theta}_i)\\
				&=(-1)^{|x_2||\hat{x}_2|}2\lambda(x_2)\epsilon(x_1,x_2,\hat{\theta}_i)+(-1)^{|x_1||\hat{x}_1|}2\lambda(x_1)\epsilon(x_1,x_2,\hat{\theta}_i)\\
				&=2(\epsilon(\hat{x}_1)\epsilon(x_1)\lambda(x_1)+\epsilon(\hat{x}_2)\epsilon(x_2)\lambda(x_2))\epsilon(x_1,x_2,\hat{\theta}_i)
			\end{align*}
			by Lemmas \ref{comm} and \ref{epsilon(x)}. Then we get
			\begin{align*}
				b(x_1,x_2)&=2(\epsilon(\hat{x}_1)\epsilon(x_1)\lambda(x_1)+\epsilon(\hat{x}_2)\epsilon(x_2)\lambda(x_2))\epsilon(x_1,x_2,\hat{\theta}_i)+\lambda(\theta_i)\hat{a}(x_1,x_2)\\
				&=2(\epsilon(\hat{x}_1)\epsilon(x_1)\lambda(x_1)+\epsilon(\hat{x}_2)\epsilon(x_2)\lambda(x_2)+\epsilon(\hat{\theta}_i)\epsilon(\theta_i)\lambda(\theta_i))\epsilon(x_1,x_2,\hat{\theta}_i)\\
				&=6\epsilon(x_1,x_2,\hat{\theta}_i).
			\end{align*}

			\noindent(4) For $\theta_i=x_1=x_2$, $|\theta_i|$ is even and $3|\theta_i|=2n$ by \eqref{epsilon degree} and Lemma \ref{degree} (4) respectively. Then we get
			\[
			\lambda(\theta_i)=\frac{3(n-\frac{2n}{3})}{n}=1\text{\quad and\quad}b(x_1,x_2)=\lambda(\theta_i)\hat{a}(x_1,x_2)=3\epsilon(x_1,x_2,\hat{\theta}_i).
			\]
			Thus by Proposition \ref{a}, we obtain $a(x_1,x_2)=b(x_1,x_2)$ for all $x_1\le x_2\in\mathcal{F}_i$, completing the proof.
		\end{proof}

	\end{document}